\numberwithin{equation}{section}
\def\p{\partial}
\newtheorem{theorem}{Theorem}[section]
\newtheorem{lemma}[theorem]{Lemma}
\newtheorem{corollary}[theorem]{Corollary}
\newtheorem{proposition}[theorem]{Proposition}
\theoremstyle{definition}
\newtheorem{definition}[theorem]{Definition}
\newtheorem{remark}[theorem]{Remark}
\newtheorem*{rem}{Remark}
\newcommand{\Extend}[5]{\ext@arrow0099{\arrowfill@#1#2#3}{#4}{#5}}
\begin{document}
\title[positive scalar Curvature II]{contractible $3$-manifolds and positive scalar curvature (II)}

\author[Jian. Wang]{Jian Wang}

\address{Universit\"at Augsburg, Institut f\"ur Mathematik, Lehrstuhl f\"ur Differentialgeometrie, Universit\"atsstr. 14, 86159 Augsburg, Germany}

\address{Universit\'e Grenoble Alpes, Institut Fourier, 100 rue des maths, 38610 Gi\`eres, France}
\email{jian.wang.4@stonybrook.edu}
\maketitle

\begin{abstract} In this article, we are interested in the question whether any complete contractible $3$-manifold of positive scalar curvature is homeomorphic to $\mathbb{R}^{3}$. We study the fundamental group at infinity, $\pi_{1}^{\infty}$, and its relationship with the existence of complete metrics of positive scalar curvature. We prove that a complete contractible $3$-manifold with positive scalar curvature and trivial $\pi^{\infty}_{1}$ is homeomorphic to $\mathbb{R}^{3}$. 
\end{abstract}

\section{Introduction}
This paper is the sequel of \cite{W1} and also devoted to the study of contractible $3$-manifolds which carry complete metrics of positive scalar curvature. We  are mainly concerned with the following question: 

\vspace{2mm}

\noindent\textbf{Question:} \emph{Is any complete contractible $3$-manifold of positive scalar curvature homeomorphic to $\mathbb{R}^{3}$} ?

\vspace{2mm}

Gromov-Lawson \cite{GL} and Chang-Weinberger-Yu\cite{CWY} independently proved that a complete contractible $3$-manifold with uniformly positive scalar curvature (i.e. the scalar curvature is away from zero) is homeomorphic to $\mathbb{R}^3$. The proof of Gromov and Lawson uses minimal surfaces theory while Chang, Weinberger and Yu use K-theory.

The topological structure of contractible $3$-manifolds is quite complicated. For example,  Whitehead \cite{Wh} and McMillan \cite{Mc} showed that  there are infinitely many mutually non-diffeomorphic contractible $3$-manifolds, such as the Whitehead manifold.

The Geometrisation conjecture which was confirmed by G. Perelman \cite{P1, P2, P3} and a result of McMillan  \cite{Mc1} tell us  that a contractible $3$-manifold can be written as an ascending union of handlebodies. Remark that if there are infinitely many handlebodies of genus zero (i.e. 3-balls), the $3$-manifold is homeomorphic to $\mathbb{R}^{3}$.

In \cite{W1},  we considered a contractible genus one $3$-manifold, an ascending union of solid tori. As mentioned above, $\mathbb{R}^{3}$ is not genus one but genus zero, since it is an increasing union of $3$-balls. In \cite{W1}, it was proved that no contractible genus one $3$-manifold admits a complete metric of nonnegative scalar curvature. 

In the present paper, we study the existence of complete metrics of positive scalar curvature and its relationship with the fundamental group at infinity.

The \emph{fundamental group at infinity}, $\pi_{1}^{\infty}$, of a path-connected space is the inverse limit of the fundamental groups of complements of compact subsets. (See Definition \ref{fgi}) 
The triviality of the fundamental group at infinity is not equivalent to the simply-connectedness at infinity. For example, the Whitehead manifold is not simply-connected at infinity but its fundamental group at infinity is trivial. 

We prove the following theorem:   \begin{theorem}\label{A} A complete contractible  3-manifold with positive scalar curvature and trivial $\pi_{1}^{\infty}$ is homeomorphic to $\mathbb{R}^{3}$.
\end{theorem}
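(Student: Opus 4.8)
The strategy I propose is to argue by contradiction, combining a topological normal form for contractible open $3$-manifolds with the minimal surface technology for positive scalar curvature. Suppose $(M^3,g)$ is complete with $R_g>0$, $M$ is contractible, $\pi_1^\infty(M)=1$, but $M\not\cong\mathbb{R}^3$. By the result of McMillan \cite{Mc1} together with Geometrisation (as recalled in the introduction), $M$ admits an exhaustion $M=\bigcup_{n\ge1}N_n$ by compact handlebodies with $N_n\subset\operatorname{int}N_{n+1}$ and $\overline{M\setminus N_n}$ connected with no compact component. Since $M$ is contractible, each $N_n$ is null-homotopic in some later $N_m$, so after passing to a subsequence every inclusion $N_n\hookrightarrow N_{n+1}$ is null-homotopic; and since $M\not\cong\mathbb{R}^3$ no subsequence of the $N_n$ consists of $3$-balls, so after a further subsequence $\operatorname{genus}(N_n)\ge1$ for all $n$.

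Next I would turn $\pi_1^\infty(M)=1$ into a \emph{quantitative} statement about this exhaustion. The groups $G_n:=\pi_1(\overline{M\setminus N_n})$ form an inverse system with $\varprojlim G_n=\pi_1^\infty(M)=1$. A short algebraic lemma shows that a Mittag-Leffler (semistable) tower of groups with trivial inverse limit is pro-trivial; since $M\not\cong\mathbb{R}^3$ it is not simply connected at infinity, i.e. the tower $\{G_n\}$ is \emph{not} pro-trivial, hence not Mittag-Leffler. Therefore there is an index $n_0$ for which the images $\operatorname{im}(G_m\to G_{n_0})$ strictly decrease for infinitely many $m$. Via the loop theorem and the handle structure of the cobordisms $\overline{N_{m+1}\setminus N_m}$, I would upgrade this to a genuinely nested ``Whitehead-type'' structure: after a final subsequence, for every $n$ the handlebody $N_n$ has positive genus, sits in $N_{n+1}$ null-homotopically, and is \emph{geometrically essential}, in the sense that the core curves of $N_n$ cannot be homotoped off $N_{n-1}$ in $M$, so in particular $N_n$ lies in no $3$-ball of $M$. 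This is precisely the configuration realised by the Whitehead manifold; the content of the step is that $\pi_1^\infty=1$ together with $M\not\cong\mathbb{R}^3$ forces infinitely many such stages.

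With the topology pinned down, I would run a Gromov--Lawson style minimal surface argument against $R_g>0$. In each cobordism $W_n:=\overline{N_{n+1}\setminus N_n}$ the class $[\Sigma_n]\in H_2(W_n,\partial W_n)$ (with $\Sigma_n=\partial N_n$) is nontrivial by essentiality, so by Meeks--Simon--Yau one can minimise area among embedded surfaces separating $\Sigma_n$ from $\Sigma_{n+1}$ and obtain an embedded area-minimising, hence stable, surface $\Sigma_n'\subset W_n$ homologous to $\Sigma_n$ with $\operatorname{genus}(\Sigma_n')\le\operatorname{genus}(\Sigma_n)$ that is still essential. Letting $n\to\infty$ and using completeness of $g$ together with the interior curvature and area estimates for stable minimal surfaces in the spirit of Gromov--Lawson, one extracts a complete, properly embedded, two-sided \emph{stable} minimal surface $\Sigma_\infty\subset M\setminus\operatorname{int}N_{n_0}$ with at least one end. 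Because $R_g>0$, the stability inequality combined with Gauss--Bonnet (Schoen--Yau, Fischer-Colbrie--Schoen) forces $\Sigma_\infty$ to be conformally parabolic with all ends planar, hence of genus $0$ and bounding in $M$; but tracking the core curves of the $N_n$ through the construction shows that $\Sigma_\infty$ must carry the positive genus of the essential handlebodies it exhausts. This contradiction proves $M\cong\mathbb{R}^3$.

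The main obstacle is the interface between the last two steps: keeping the positive genus from ``escaping to infinity'' along the exhaustion and ensuring the limit surface genuinely inherits nontrivial topology, together with the compactness needed to pass to the limit of the $\Sigma_n'$ when their areas are unbounded. Equally delicate is the topological step converting the failure of the Mittag-Leffler condition into the precise ``geometrically essential infinite nest'' used as input to the minimal surface construction; this is where the hypothesis $\pi_1^\infty(M)=1$ does its real work, separating the present situation from the genus one case treated in the prequel \cite{W1}.
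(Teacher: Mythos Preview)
Your proposal contains a genuine gap at the final step, and the role you assign to $\pi_1^\infty(M)=1$ is not the one that makes the argument work.

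The contradiction you aim for --- that the limit surface $\Sigma_\infty$ ``must carry the positive genus of the essential handlebodies it exhausts'' --- does not materialise. By Schoen--Yau and Fischer-Colbrie--Schoen, any complete stable minimal surface in positive scalar curvature is conformally a plane, and by the properness result (Theorem~\ref{proper}) it is properly embedded. In a contractible $3$-manifold a proper plane separates and bounds. Nothing in your construction forces the limit to remember the genus of the $\Sigma'_n$: the genus can and will escape to infinity along the exhaustion, and ``tracking the core curves'' gives no obstruction, since a plane can perfectly well sit in $M$ missing all the cores. You flag this as ``the main obstacle,'' but it is not a technical obstacle --- it is a hole in the logical structure, and the Gromov--Lawson template you invoke does not close it.

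Relatedly, your use of $\pi_1^\infty=1$ is misplaced. You spend it on the topological normalisation (the failure of Mittag--Leffler and the ``Whitehead-type nest''), but the paper obtains the needed nested family $\{R_k\}$ with Property~(H) purely from $M\not\cong\mathbb{R}^3$, via repeated loop-lemma surgeries (Theorem~\ref{H}), with no appeal to $\pi_1^\infty$. The hypothesis $\pi_1^\infty(M)=1$ enters only \emph{after} the limit lamination $\mathscr{L}$ has been built, and it is used in the opposite direction: if some leaf $L_t$ of $\mathscr{L}$ failed the Vanishing property --- i.e.\ met $\partial R_k$ in essential circles for infinitely many $k$ --- those circles would assemble into a nontrivial element of $\pi_1^\infty(M)$ (Lemma~\ref{van1}). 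Triviality of $\pi_1^\infty$ therefore forces every leaf, and then the whole lamination, to have the Vanishing property (Corollaries~\ref{trivial-van1},~\ref{trivial-van2}).

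The paper's minimal-surface input is also different from yours: it does not minimise closed separating surfaces in the cobordisms $\overline{N_{n+1}\setminus N_n}$, but rather solves the Plateau problem for a system of meridian \emph{discs} $\{\Omega^l_k\}$ in each $N_k$ (Meeks--Yau, after making $\partial N_k$ mean convex), forming laminations $\mathscr{L}_k$ that subconverge via Colding--Minicozzi to $\mathscr{L}$. The contradiction is then not about genus at all. On one hand, the Vanishing property and a covering lemma (Lemma~\ref{covering}) force $\mathscr{L}\cap\partial R_{k_0}(\epsilon)$ to lie in a finite disjoint union of discs in $\partial R_{k_0}(\epsilon)$, hence for large $j$ every circle of $\mathscr{L}_j\cap\partial R_{k_0}(\epsilon)$ is null-homotopic there. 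On the other hand, Corollary~\ref{inter} guarantees that $\mathscr{L}_j\cap\partial R_{k_0}(\epsilon)$ always contains a meridian of $R_{k_0}(\epsilon)$. This is the contradiction. In short: the limit surfaces are planes, and the argument exploits \emph{how those planes intersect $\partial R_k$}, not any residual genus.
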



However, there are uncountably many mutually non-homeomorphic contractible $3$-manifolds with non-trivial $\pi_{1}^\infty$. In Appendix C, we construct such a manifold and show that this manifold has no complete metric of positive scalar curvature.



\vspace{2mm}

\emph{1.1 Handlebodies and Property (H)} Let $(M, g)$ be a complete contractible $3$-manifold   of positive scalar curvature. It is an increasing union of closed handlebodies $\{N_{k}\}$ (see Theorem \ref{handle}).

 In the following, we consider that $M$ is not homeomorphic to $\mathbb{R}^{3}$.  
 We may assume that none of the  $N_{k}$ is contained in a $3$-ball (i.e. homeomorphic to a unit ball in $\mathbb{R}^3$) (see Remark \ref{ball}).
  It plays a crucial role in our argument. 

In the genus one case,  the family $\{N_{k}\}$ has several good properties. 
For example, the maps $\pi_{1}(\p N_{k})\rightarrow \pi_{1}(\overline{M\setminus N_{k}})$ and $\pi_{1}(\p N_{k})\rightarrow \pi_{1}(\overline{N_{k}\setminus N_{0}})$ are both injective (see Lemma 2.10 in \cite{W1}). 
These properties are crucial and necessary in the study of the existence of complete metrics of positive scalar curvature. Generally, the family $\{N_{k}\}$ may not have the above properties.

For example, the map $\pi_1(\p N_0)\rightarrow \pi_{1}(\overline{M\setminus N_{0}})$ may not be injective. To overcome it, we use topological surgeries on $N_{0}$ and find a new handlebody to replace it. Roughly, we use the loop lemma to find an embedded disc $(D, \p D)\subset (\overline{M\setminus N_{0}}, \p N_0)$ whose boundary is a non-nullhomotopic circle in $\p N_{0}$.  The new handlebody is obtained from $N_{0}$ by attaching a closed tubular neighborhood $N_{\epsilon}(D)$ of $D$ in $\overline{M\setminus N_{0}}$. 

 We repeatedly use topological surgeries on each $N_{k}$ to obtain a new family $\{R_{k}\}_{k}$ of closed handlebodies with  the following properties, called \emph{Property (H)}:

%
%


\begin{enumerate}
\item the map $\pi_{1}(\partial R_{k})\rightarrow \pi_{1}(\overline{R_{k}\setminus R_{0}})$ is injective for $k>0$;
\item the map $\pi_{1}(\partial R_{k})\rightarrow \pi_{1}(\overline{M\setminus R_{k}})$ is injective for $k\geq 0$;
\item each $R_{k}$ is homotopically trivial  in $R_{k+1}$ but not contained in a $3$-ball in $M$ ;
\item there exists a sequence $\{j_{k}\}_{k}$ of increasing integers , such that $\pi_{1}(\partial R_{k}\cap \partial N_{j_{k}})\rightarrow \pi_{1}(\partial R_{k})$ is surjective.
\end{enumerate}

\begin{rem} If $M$ is not homeomorphic to $\mathbb{R}^{3}$, the existence of such a family is ensured by Theorem \ref{H}. It is not unique. In addition, the union of such a family may not be equal to $M$.\end{rem}For example, if $M:=\cup_{k} N_{k}$ is a contractible genus one $3$-manifold, the family $\{N_{k}\}$ (assumed as above) satisfies \emph{property $(H)$}(see Lemma 2.10 in \cite{W1}).

\emph{1.2 The Vanishing Property} It is classical that the geometry of minimal surfaces gives topological informations on $3$-manifolds. 
This fact appeared in Schoen-Yau's works \cite{SY, SY1} as well as in Gromov-Lawson's \cite{GL}. 

In the genus one case, the geometry of a stable minimal surface is constrained by the geometric index (see \emph{Property $P$} in \cite{W1}). In  the higher genus case, the geometry of  stable minimal surfaces is related to  the fundamental group at infinity. 

In order to clarify their relationship, let us introduce a geometric property, called \emph{the Vanishing property}. Consider a complete contractible $3$-manifold $(M, g) $ of positive scalar curvature which is not homeomorphic to $\mathbb{R}^{3}$. As indicated above, there is an increasing family $\{R_{k}\}_{k}$ of closed handlebodies  with \emph{Property (H)}(see Theorem \ref{H}).

A complete embedded stable minimal surface $\Sigma\subset (M, g)$ is called to satisfy \emph{the Vanishing property} with respect to  the  family $\{R_{k}\}_{k}$ if there is a positive integer $k(\Sigma)$ so that for $k\geq k(\Sigma)$, any circle in $\Sigma\cap \partial R_{k}$ is null-homotopic in $\partial R_{k}$ (see Definition \ref{Vanishing}).

If a complete stable minimal  surface does not satisfy the Vanishing property with respect to  $\{R_{k}\}_{k}$, it gives a non-trivial element in the fundamental group at infinity  (see Lemma \ref{van1}). As a consequence, if $\pi_{1}^{\infty}$ is trivial, each complete stable minimal surface in $M$ has the Vanishing property with respect to  $\{R_{k}\}_{k}$ (see Corollary \ref{trivial-van1}).

\vspace{2mm}

\emph{1.3 The idea of the proof of Theorem \ref{A}}  Our main strategy is to argue by contradiction. Suppose that  a complete contractible 3-manifold $(M, g)$ with positive scalar curvature and trivial $\pi^{\infty}_{1}(M)$ is not homeomorphic to $\mathbb{R}^{3}$.

Before constructing minimal surfaces, let us introduce two notions from 3-dimensional topology. For a closed handlebody $N$ of genus $g>0$, a \emph{meridian} $\gamma\subset \partial N$ of $N$ is an embedded circle which is null-homotopic in $N$ but non-contractible in $\partial N$ (see Definition \ref{mer}).

 \emph{A system of meridians}  of  $N$ is a collection of $g$ distinct meridians $\{\gamma^{l}\}^{g}_{l=1}$ with the property that $\partial N \setminus \amalg_{l=1}^g\gamma^{l}$ is homeomorphic to an open disc with some punctures.
 Its existence is ensured by Lemma \ref{sys}. 
 
Let $\{N_{k}\}_{k}$ and $\{R_{k}\}_{k}$ be assumed as above. Since $N_{0}$ is not contained in a $3$-ball (see Remark \ref{ball}), the genus of $N_{k}$ is greater than zero. The handlebody $N_{k}$ has a system of meridians $\{\gamma^{l}_{k}\}_{l=1}^{g(N_{k})}$. 
Roughly speaking, there are $g(N_{k})$ disjoint area-minimizing discs $\{\Omega^{l}_{k}\}_{l}$ with $\partial  \Omega^{l}_{k}=\gamma^{l}_{k}$.
 Their existence is ensured by the works of Meeks and Yau (see \cite{YM, YM1} or Theorem 6.28 of \cite{CM1}) when the boundary  $\partial N_{k}$ is mean convex. 
 
 Let us explain their existence. We construct these discs by induction on $l$. 
 
 When $l=1$, there is an embedded area-minimizing disc $\Omega^{1}_{k}\subset N_{k}$ with boundary $\gamma^1_{k}$ (see \cite{YM, YM1} or Theorem 6.28 of \cite{CM1}).
 
 Suppose that there are $l$ disjointly embedded stable minimal discs $\{\Omega^{i}_{k}\}_{i=1}^{l}$ with $\partial \Omega^{i}_{k}=\gamma^{i}_k$. Our target is to construct a stable minimal surface $\Omega^{l+1}_{k}$ with boundary $\gamma^{l+1}_{k}$.
 
 Let us consider the Riemannian manifold $(T_{k, l}, g|_{T_{k,l}})$, where $T_{k, l}:=N_{k}\setminus \amalg_{i=1}^l \Omega^l_{k}$. It is a handlebody of genus $g(N_{k})-l$. For example, see the following figure.  
\begin{figure}[H]
\begin{center}
\begin{tikzpicture}
\node[draw,ellipse,minimum height=150pt, minimum width = 300pt,thick] (S) at (0,0){}; 
\draw(-2, 0) arc(200:340: 2 and 1.5);
\draw (-1.8, -0.3) arc(160: 20: 1.8 and 1);

\draw[dashed][red](0,-1) arc(90:-90: 0.4 and 0.8);
\draw[red](0, -1) arc(90: 270: 0.4 and 0.8);

 \begin{scope}
    \clip (0.4, -1.8) arc(0:360: 0.4 and 0.8);
   \draw[pattern=north west lines, pattern color=red] (-4,-4) rectangle (10,10);    
     \end{scope}

\draw (-4.5,-4)--(4.5, -4);

\draw(-4.5, -5.6)--(4.5, -5.6);
\draw[dashed][red](-4.5,-4) arc(90:-90: 0.4 and 0.8);
\draw[red](-4.5, -4) arc(90: 270: 0.4 and 0.8);

 \begin{scope}
    \clip (-4.1, -4.8) arc(0:360: 0.4 and 0.8);
   \draw[pattern=north west lines, pattern color=red] (-8,-7) rectangle (10,10);    
     \end{scope} 
     
    \begin{scope}
    \clip (4.9, -4.8) arc(0:360: 0.4 and 0.8);
   \draw[pattern=north west lines, pattern color=red] (-25,-20) rectangle (100,100);    
     \end{scope}

\draw[red](4.5,-4) arc(90:-90: 0.4 and 0.8);
\draw[red](4.5, -4) arc(90: 270: 0.4 and 0.8);

\draw[->] (0, -3) -- (0, -3.5);

\node(Os) at (0,-2) {$\Omega^{1}_{k}$};

\node(Os) at (0,2) {$(N_{k}, g_{k})$};
\node(Os) at (-0.7, -2) {$\gamma^{1}_{k}$};
\node(Os) at (0, -5) {$({T}_{k, 1}, g_{k}|_{T_{k,1}})$};

\node(Os) at (-4.4,-4.8) {${\Omega^{1}_{k}}^{-}$};
\node(Os) at (-5.2, -5) {${\gamma^{1}_{k}}^{-}$};
\node(Os) at (4.6,-4.8) {${\Omega^{1}_{k}}^{+}$};
\node(Os) at (3.7, -5) {${\gamma^{1}_{k}}^{+}$};
\label{1}
\end{tikzpicture}

\caption{}

\end{center}
\end{figure}

 The boundary of $(T_{k, l}, g|_{T_{k,l}})$ consists of $\partial N_{k}\setminus \amalg_{i=1}^{l}\gamma^{i}_{k}$ and some disjoint discs $\{{ \Omega_{k}^{i}}^{-}\}_{i=1}^{l}$ and $\{{\Omega_{k}^{i}}^{+}\}_{i=1}^l$. The two discs ${ \Omega_{k}^{i}}^{-}$ and ${\Omega_{k}^{i}}^{+}$ both come from the same minimal disc $\Omega^i_{k}$. Therefore,  the mean curvature of the boundary of $(T_{k,l}, g|_{T_{k,l}})$ is non-negative. (See Section 5.1) 

 In addition, $\{\gamma^{i}_{k}\}_{i>l}$ is a system of meridians of the handlebody $(T_{k, l}, g|_{T_{k,l}})$ Then, we use the result of Meeks and Yau to find an embedded stable minimal surface $\Omega^{l+1}_{k}\subset T_{k, l}$ with boundary $\gamma^{l+1}_{k}$. These discs $\{\Omega^{i}_{k}\}_{i=1}^{l+1}$ are disjoint in $N_{k}$.
  This finishes the inductive construction.

\vspace{2mm}

If $\partial N_{k}$ is not mean convex, we can deform the metric in a small neighborhood of it so that for the new metric, it becomes mean convex. As constructed above, each $\Omega^{l}_{k}$ is stable minimal for this new metric and for the original one away from a neighborhood of $\partial N_{k}$ (near $N_{k-1}$, for example). It is sufficient for our proof.

\vspace{2mm}

Define the lamination $\mathscr{L}_{k}:=\amalg_{l} \Omega^{l}_{k}$ (i.e. a disjoint union of embedded surfaces). We show that each lamination $\mathscr{L}_{k}$ intersects the compact set $R_{0}$ (from Corollary \ref{inter}).
According to Colding-Minicozzi's theory (see Appendix B of \cite{CM}), the sequence $\{\mathscr{L}_{k}\}_{k}$ sub-converges to a lamination $\mathscr{L}:=\cup_{t\in \Lambda}L_{t}$ in $(M, g)$. Note that each leaf $L_{t}$ is a complete (non-compact) stable minimal surface (see Theorem \ref{min-lim}).

As indicated above, since $(M, g)$ has positive scalar curvature and $\pi_{1}^\infty(M)$ is trivial, each leaf $L_{t}$ in $\mathscr{L}$ has the Vanishing property with respect to  $\{R_{k}\}_{k}$ (see Lemma \ref{van1} and Corollary \ref{trivial-van1}). 
Furthermore, the lamination $\mathscr{L}$ also satisfies the Vanishing property (see Corollary \ref{trivial-van2}).
That is to say, 

\vspace{2mm}

\emph{there exists a positive integer $k_{0}$ such that for any $k\geq k_{0}$ and any $t\in \Lambda$, any circle in $L_{t}\cap \partial R_{k}$ is null-homotopic in $\partial R_{k}$.}

\vspace{2mm}

The reason is described as follows.

We argue by contradiction. Suppose that there exists a sequence $\{k_{n}\}_{n}$ of increasing integers and a sequence $\{L_{t_{n}}\}$ of leaves in $\mathscr{L}$ satisfying that  $L_{t_{n}}\cap \partial R_{k_{n}}$ has at least one non-nullhomotopic circle in $\partial R_{k_{n}}$ for each $n$. 

The sequence $\{L_{t_{n}}\}$ smoothly subconverges to some  leaf in $\mathscr{L}$. For our convenience, we may assume that the sequence $\{L_{t_{n}}\}$ converges to the leaf $L_{t_{\infty}}$. 
The leaf $L_{t_{\infty}}$ satisfies the Vanishing property (see Lemma \ref{van1}).
 That is to say,  there is a positive integer $k(L_{t_{\infty}})$ such that for $k\geq k(L_{t_{\infty}})$, any circle in $\partial R_{k}\cap L_{t_{\infty}}$ is homotopically trivial  in $\partial R_{k}$. 

However, since $L_{t_{n}}\cap \partial R_{k_{n}}$ has some non-nullhomotopic circle in $\partial R_{k_{n}}$, we show that for $k_{n}>k(L_{t_{\infty}})$, $L_{t_{n}}\cap \partial R_{k(L_{t_\infty})}$ has a meridian of $ R_{k(L_{t_{\infty}})}$ (see Remark \ref{inj} and Corollary \ref{general one}). 
These meridians of $ R_{k(L_{t_{\infty}})}$ will converge to a meridian of $R_{k(L_{t_{\infty}})}$ which is contained in $L_{t_{\infty}}\cap \partial R_{k(L_{t_{\infty}})}$. This is in contradiction with the last paragraph. 

\vspace{2mm}

Let us explain how to deduce a contradiction from the Vanishing property of  $\mathscr{L}$.

We can show that if $N_{k}$ contains $R_{k_{0}}$ (for $k$ large enough), then $\mathscr{L}_{k}\cap \partial R_{k_{0}}$ contains at least one meridian(s) of $R_{k_{0}}$ (see Corollary \ref{inter}).
 Since $\mathscr{L}_{k}$ sub-converges to $\mathscr{L}$, then these meridians of $ R_{k_{0}}$ will sub-converge to a non-contractible circle in $\mathscr{L}\cap \partial R_{k_{0}}$. That is to say, some leaf $L_{t}$ in $\mathscr{L}$ contains this non-nullhomotopic circle in $\partial R_{k_{0}}$. This is in contradiction with the above fact (the Vanishing property of $\mathscr{L}$).

\vspace{2mm}

 \emph{1.4 The plan of this paper} 
 
 For the first part of the paper, we describe the topological properties of contractible $3$-manifolds. In Section 2, we recall some notions, such as simply-connectedness at infinity, fundamental group at infinity and handlebodies. In Section 3, we introduce meridian curves and meridian discs in a handlebody. In Section 4, we introduce two types of surgeries on handlebodies. Using these surgeries, we show the existence of an increasing family of handlebodies with good properties, called \emph{Property (H)}.
 
 In the second part, we treat minimal surfaces and related problems. In Section 5, we construct minimal laminations and consider the convergence theory of these laminations. In Section 6, we introduce the \emph{Vanishing property} and study its relation with the fundamental group at infinity. Their relationship  is clarified by Lemma \ref{van1}. 
 
 For the third part, we give the complete proof of Theorem \ref{A}. In Sections 7 and 8, our proof is  similar to the genus one case.  In Appendix C, we construct  a contractible $3$-manifold with non-trivial $\pi^{\infty}_{1}$. In addition, we prove that this manifold has no complete metric of positive scalar curvature.

\section{Background}

\subsection{Simply-connectedness at infinity and $\pi_{1}^{\infty}$}

\begin{definition}\label{sci}A topological space $M$ is  \emph{simply connected at infinity} if for any compact set $K\subset M$, there exists a  compact set $K'$ containing $K$ so that the induced map $\pi_{1}(M\setminus K')\rightarrow \pi_{1}(M\setminus K)$ is trivial. \end{definition}

The Poincar\'e conjecture (see \cite{BBBMP}, \cite{MT} and \cite{cao-zhu}) shows that any contractible $3$-manifold is irreducible (i.e. any embedded $2$-sphere in the $3$-manifold bounds a closed $3$-ball).
 A result of Stallings \cite{S} tells us that the only contractible and simply-connected at infinity $3$-manifold is $\mathbb{R}^{3}$.

\begin{remark}\label{ball}If a contractible $3$-manifold $M$ is not homeomorphic to $\mathbb{R}^{3}$, it is not simply-connected at infinity. That is to say, there is a compact set $K\subset M$ so that for any compact set $K'\subset M$ containing $K$, the induced map $\pi_{1}(M\setminus K')\rightarrow \pi_{1}(M\setminus K)$ is not trivial. We also know that the set $K$ is not contained in a $3$-ball in $M$. The reason is described below: 

\vspace{2mm}

If a closed $3$-ball $B$ (i.e. a closed set homeomorphic to a closed unit ball in $\mathbb{R}^{3}$) contains $K$,  Van-Kampen's Theorem shows that $\pi_{1}(M)\cong \pi_{1}(\overline{M\setminus B})\ast_{\pi_{1}(\partial B)}\pi_{1}(B)$. In addition, $\pi_{1}(B)$ and $\pi_{1}(\partial B)$ are both trivial. Therefore, $\pi_{1}(\overline{M\setminus B})\cong\pi_{1}(M)$ is trivial. That is to say, the map $\pi_{1}(M\setminus B)\rightarrow \pi_{1}(M\setminus K)$ is trivial. It leads to a contradiction.   
\end{remark}

\begin{definition}\label{fgi}The fundamental group at infinity $\pi_{1}^{\infty}$ of a path-connected space is the inverse limit of the fundamental groups of complements of compact subsets.\end{definition}

\begin{remark}\label{element} Let us consider a contractible $3$-manifold $M$. $\pi_{1}^{\infty}(M)$ is non-trivial if and only if  there is a compact set $K$ and a family $\{\gamma_{k}\}_{k}$ of circles  in $M\setminus K$ going to infinity with the property that  for each $k$

1) $\gamma_{k}$ is not null-homotopic in $M\setminus K$ and 

2) $\gamma_{k}$ is homotopic to $\gamma_{k+1}$ in $M\setminus K$.

Such a family of circles gives a non-trivial element in $\pi_{1}^{\infty}(M)$. 
\end{remark}

For any contractible $n$-manifold $M^{n}$, it is simply-connected at infinity if and only if $\pi_{1}^{\infty}(M^{n})$ is trivial, when $n\geq 4$ (See \cite{CWY}). However, this result is not true in dimension $3$. 

\begin{remark}
Any contractible genus one $3$-manifold $M$ is not homeomorphic to $\mathbb{R}^{3}$. It is not simply-connected at infinity but its fundamental group at infinity is trivial. 
The reason is as follows: 

\vspace{2mm}

Let $M$ be an increasing union of solid tori $\{N_k\}^\infty_{k=1}$. Lemma 2.10 in \cite{W1} shows that  the  induced maps $\pi_1(\p N_k)\rightarrow \pi_1(N_k\setminus N_0)$ and $\pi_1(\p N_k)\rightarrow\pi_1(\overline{M\setminus N_k})$ are both injective for $k>0$. Namely, the family $\{N_k\}$ is excellent (see Section 2 of \cite{Myers}).

A result (see Lemma 4.1 of \cite{Myers} on Page 33) shows that  if a closed curve $\gamma\subset M\setminus N_{k+1}$ is homotopic to $\gamma'\subset N_{k}\setminus N_m$ in $M\setminus N_0$, then $\gamma$ is contractible in $M\setminus N_0$ where  $0<m<k$ . 

\vspace{2mm}

Remark \ref{element} shows that an element in $\pi^{\infty}_1(M)$ gives a family of circles $\{\gamma_i\}^\infty_{i=1}$ going to infinity and a compact set $K$. We may assume that $K$ is equal to $N_0$.  Each $\gamma_i$ is homotopic to $\gamma_1$ in $M\setminus N_0$.

Let $\gamma_1$ be a subset of some $N_k\setminus N_0$. We choose $i$ large enough so that $\gamma_i$ is a subset of $M\setminus N_{k+1}$. We use the above statement to have that $\gamma_i$ is contractible in $M\setminus N_0$. Namely, $\pi^{\infty}_1(M)$ is trivial.

\end{remark}

\subsection{Handlebodies}
\begin{definition}[Page 46, \cite{Rol}] A closed handlebody is any space obtained from the closed $3$-ball $D^{3}$ ($0$-handle) by attaching $g$ distinct copies of $D^{2}\times[-1, 1]$ ($1$-handle) with the homeomorphisms identifying the $2g$ discs $D^{2}\times\{\pm1\}$ to $2g$ disjoint $2$-disks on $\partial D^{3}$, all to be done in such a way that the resulting $3$-manifold is orientable. The integer $g$ is called the genus of the handlebody. 
\end{definition}
Let us remark that a handlebody of genus $g$ is homeomorphic to a boundary connected sum of $g$ solid tori. Therefore, its boundary is a compact surface of genus $g$. (See Page 46 in \cite{Rol})

From a result of McMillan \cite{Mc1} and the Poincar\'e conjecture (see \cite{BBBMP, MT, cao-zhu}), we know that: 
\begin{theorem}\label{handle} \textnormal{(\cite{Mc1}, Theorem 1 on Page 511)}Any contractible $3$-manifold can be written as an ascending union of handlebodies . 
\end{theorem}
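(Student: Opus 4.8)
The plan is to prove Theorem \ref{handle}, namely that any contractible $3$-manifold $M$ can be written as an ascending union of handlebodies. Since the statement is attributed to McMillan \cite{Mc1} together with the resolution of the Poincar\'e Conjecture, the proof is really a matter of assembling known ingredients rather than inventing new ones, so I will sketch how those ingredients fit together. The starting point is that $M$, being a contractible (hence open) $3$-manifold, is exhausted by a sequence of compact connected submanifolds $K_1\subset K_2\subset\cdots$ with $M=\bigcup_k K_k$; by general position and standard smoothing one may take each $\partial K_k$ to be a closed surface. The goal is to replace each $K_k$ by a handlebody containing it and contained in the next term.

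First I would recall McMillan's structural result for compact $3$-submanifolds of a contractible (more precisely, of a $\pi_1$-trivial, or suitably "small") $3$-manifold: after the Poincar\'e Conjecture, any such $M$ is irreducible (every embedded $2$-sphere bounds a ball, as noted in the excerpt just before the theorem), and moreover every compact connected subpolyhedron of $M$ is contained in a \emph{cube with handles}, i.e. a handlebody, sitting inside $M$. This is exactly Theorem 1 on page 511 of \cite{Mc1}. Concretely, given a compact $K$, one first engulfs it in a compact submanifold $W$ with $2$-sphere and higher-genus boundary components, caps off the $2$-sphere boundary components using irreducibility (each bounds a ball on the outside, which can be absorbed), and then modifies $W$ to make it a handlebody: one uses the fact that a compact orientable irreducible $3$-manifold whose fundamental group is a free group and which embeds in a contractible manifold is a handlebody — here the key point is that $H_1$ of the relevant piece is free and the Loop Theorem / Dehn's Lemma produces compressing discs that reduce the complement until what remains is a union of handlebodies, which can be amalgamated along arcs into a single one.

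Then I would run the standard back-and-forth exhaustion argument. Apply the engulfing statement to $K_1$ to get a handlebody $N_1$ with $K_1\subset \mathrm{int}\,N_1\subset M$. Having constructed $N_k$, choose an index $j$ with $N_k\subset \mathrm{int}\,K_j$ (possible since the $K_j$ exhaust $M$ and $N_k$ is compact), and apply the engulfing statement to $K_j\cup N_k$ to obtain a handlebody $N_{k+1}$ with $K_j\cup N_k\subset \mathrm{int}\,N_{k+1}$. Interleaving the two exhaustions $\{K_k\}$ and $\{N_k\}$ gives $\bigcup_k N_k=M$ with $N_k\subset \mathrm{int}\,N_{k+1}$, i.e. the desired ascending union of handlebodies.

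I expect the main obstacle — really the only nontrivial point — to be the engulfing/realization step: showing that an arbitrary compact connected submanifold of a contractible $3$-manifold $M$ can be enclosed in a handlebody inside $M$. This is where irreducibility of $M$ (a consequence of the Poincar\'e Conjecture) is essential to kill spherical boundary components, and where the Loop Theorem and Dehn's Lemma are needed to compress the engulfing region down to handlebody form; without Geometrisation/Poincar\'e one cannot rule out fake cells or non-simply-connected pieces obstructing the compression. Since the paper invokes \cite{Mc1} and the Poincar\'e Conjecture directly, I would simply cite McMillan's theorem for this step and present the exhaustion argument as the routine remainder.
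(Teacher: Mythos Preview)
The paper does not give its own proof of Theorem~\ref{handle}: it simply states the result and cites McMillan \cite{Mc1} together with the Poincar\'e Conjecture. Your proposal is a correct and reasonable elaboration of exactly those ingredients (irreducibility from Poincar\'e, McMillan's engulfing into a cube with handles, and a routine back-and-forth exhaustion), so there is nothing to compare---you have supplied more detail than the paper itself does.
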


\begin{remark}\label{stru}
Let us consider a contractible $3$-manifold $M$. If it is not homeomorphic to $\mathbb{R}^{3}$, it can written as an increasing family of handlebodies $\{N_{k}\}$ satisfying  that
\begin{itemize}
\item  $N_{k}$ is homotopically trivial in $N_{k+1}$ for each $k$;
\item  none of the $N_{k}$ is contained in a $3$-ball (from Remark \ref{ball}).
\end{itemize}
\end{remark}

%
%
\section{Meridians} In this part, we consider  a closed handlebody $N$.

\begin{definition}\label{mer}
An embedded circle $\gamma\subset \partial N$ is called a \emph{meridian} if $\gamma$ is null-homotopic in ${N}$, but not contractible in $\partial N$. 

An embedded closed disc $(D, \partial D)\subset ({N}, \partial N)$ is called a \emph{meridian disc} if its boundary is a meridian of $N$.

The disc $D$ is  a \emph{separating meridian disc}, if $N\setminus D$ is not connected. Its boundary is called  a \emph{separating meridian}. 

The disc $D$ is  a \emph{non-separating disc}, if $N\setminus D$ is connected. Its boundary is called a \emph{non-separating meridian}.

\end{definition}

\begin{rem} Let us consider a meridian $\gamma$ of $N$. If $\gamma$ is a separating meridian, it cuts $\partial N$ into two components. The class $[\gamma]$ is equal to zero in $H_{1}(\partial N)$. 

If  $\gamma$ is a non-separating meridian, then $\partial N\setminus \gamma$ is connected. The class $[\gamma]$ is a non-trivial element in $H_{1}(\partial N)$. 
\end{rem}

\subsection{Effective meridians}
Consider two closed handlebodies $N'$ and $N$ with $N'\subset \text{Int}~N$. 

\begin{definition}\label{eff}
A meridian $\gamma$ of $N$ is called \emph{an effective meridian relative to $N'$} if any meridian disc with boundary $\gamma$ intersects the core of $N'$ (i.e. a  deformation retraction of $N'$), where the core of $N'$ is a $1$-dimensional CW complex.\par

The handbody $N$ is called \emph{an effective handlebody relative to $N'$}, if any meridian of $N$ is an effective meridian relative to $N'$.
\end{definition}
Note that if $N'$ is contained in a $3$-ball $B\subset \text{Int}~N$, there is no effective meridian relative to $N'$.

In the following, we will repeatedly use the Loop lemma. 

\begin{lemma}\label{loop} \textnormal{(\cite{HA}, Theorem 3.1 on Page 54)}Let $M$ be an orientable  $3$-manifold with boundary $\partial M$, not necessarily compact . If there is a map $f: (D^{2}, \partial D^{2})\rightarrow (M, \partial M)$ with the property that $f|_{\partial D^{2}}$ is not null-homotopic in $\partial M$,  then there is an embedding $h$ with the same property. \end{lemma}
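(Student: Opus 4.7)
The plan is to use Papakyriakopoulos's tower construction, which is the classical route to the Loop Lemma. First I would place the given map $f : (D^{2}, \partial D^{2}) \to (M, \partial M)$ in general position (PL or smooth), so that its singular set $S(f) \subset D^{2}$ becomes a finite graph whose edges are disjoint double curves (arcs with endpoints on $\partial D^{2}$ and circles in the interior of $D^{2}$) meeting at finitely many triple points. I would then pass to a compact regular neighborhood $N_{0}$ of $f(D^{2})$ in $M$, so that $f$ factors as $D^{2} \to N_{0} \hookrightarrow M$, and I would work inside this compact $3$-manifold.

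Next I would build an ascending tower of covers $N_{0} \leftarrow N_{1} \leftarrow N_{2} \leftarrow \cdots$ with matching lifts $\tilde f_{i} : D^{2} \to N_{i}$. At each stage $N_{i+1} \to N_{i}$ is chosen to be a connected double cover, using a nontrivial class in $H_{1}(N_{i}; \mathbb{Z}/2)$ produced by one of the double curves of $\tilde f_{i}$: this lifts $\tilde f_{i}$ to a map with strictly smaller singular complexity. Since the number of components of $S(\tilde f_{i})$ is a decreasing nonnegative integer, the tower terminates after finitely many steps at a level $N_{n}$ where $\tilde f_{n}$ has no remaining double curves other than those that can be removed by innermost-disc surgery in $D^{2}$.

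At the top of the tower, innermost-disc replacements on $D^{2}$ together with an outermost-arc argument yield a PL embedding $\tilde h : (D^{2}, \partial D^{2}) \to (N_{n}, \partial N_{n})$. The essentiality of the boundary is tracked inductively: each cover $N_{i+1} \to N_{i}$ restricts to a cover of boundary surfaces, and a standard argument shows that if $\tilde f_{i}|_{\partial D^{2}}$ is essential in $\partial N_{i}$, then one may choose the double cover so that either $\tilde f_{i+1}|_{\partial D^{2}}$ remains essential or we can already perform a surgery on $D^{2}$ that both reduces singularities and preserves essentiality. The embedded disc $\tilde h(D^{2})$ then projects down the tower; at the bottom, residual self-intersections can be eliminated by cut-and-paste along the remaining sheet intersections (each such swap either strictly reduces $|S(\cdot)|$ or produces a shorter boundary curve still essential in $\partial M$), producing the required embedding $h$.

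The main obstacle is the simultaneous control of singularity reduction and boundary essentiality: it is easy to remove singularities at the cost of making the boundary inessential, and vice versa. The key ingredient that I expect to do the real work is the \emph{half-lives/half-dies} principle for $H_{1}(\partial N_{i}) \to H_{1}(N_{i})$, which forces a nontrivial double cover to exist whenever an essential boundary loop bounds singularly in $N_{i}$, together with the innermost-disc combinatorics on $D^{2}$ that guarantee the outermost arc of $S(\tilde f_{n})$ produces a simple essential arc in $\partial N_{n}$ whose cut-off disc is the desired embedding.
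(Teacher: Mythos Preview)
The paper does not prove this lemma; it is stated with a citation to \cite{HA} and used as a black box throughout. So there is no ``paper's own proof'' to compare against.

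Your outline is the classical Papakyriakopoulos tower argument and is broadly correct as a sketch. A few points are worth sharpening if you intend to carry it out in full. First, the reduction of singularities in the tower is not driven by double curves of $\tilde f_i$ per se, but by the fact that a regular neighborhood of the image at each stage has nontrivial $H_1$ with $\mathbb{Z}/2$ coefficients (this is where half-lives/half-dies enters), so a nontrivial double cover exists; the lift then has strictly smaller image-neighborhood complexity, and the tower terminates when the top stage is simply connected (or has trivial $H_1(\cdot;\mathbb{Z}/2)$). Second, the preservation of essentiality on the way back down is not handled by ad hoc choices of covers but by fixing in advance a normal subgroup $N \lhd \pi_1(\partial M)$ not containing $[f|_{\partial D^2}]$ and tracking membership in the preimage of $N$ at each level; this is Stallings's refinement and is what guarantees that the cut-and-paste on the descent always leaves at least one piece with boundary outside $N$. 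Your last paragraph gestures at this difficulty but the mechanism you describe (``choose the double cover so that\ldots'') is not quite the right one. With those corrections the argument goes through as in Hempel or Hatcher.
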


\begin{remark}\label{Int} We may assume that $h(\text{Int}~D^{2})\subset \text{Int} ~ M$. The reason is described below: 

Let us consider a 1-sided open neighborhood $ M_{\epsilon} \cong \partial M \times [0, \epsilon)$ of $\partial M$ in $M$.  Shrinking the image of $f$ into $M(\epsilon):=M\setminus M_{\epsilon}$, we find a map $f_{\epsilon}:(D^{2}, \partial D^{2})\rightarrow (M(\epsilon), \partial M(\epsilon))$ with the property that $f_{\epsilon}(\partial D^{2})$ is not nullhomotopic in $\partial M(\epsilon)$. We use Lemma \ref{loop} to find an embedding $h_{\epsilon}$ with the same property. Its image stays in $(M(\epsilon), \partial M(\epsilon))$. Therefore, the image of $h_{\epsilon}$ is contained in $\text{Int}~M$.

In addition, there is an embedded circle $\gamma\subset \partial M$ which is homotopic to $h_{\epsilon}(\partial D^{2})$ in $\overline{M}_{\epsilon}$. There is  an embedded annulus $A_{\epsilon}\subset \overline{M_{\epsilon}}$ joining $\gamma$ and $h_{\epsilon}(\partial D^{2})$. We find a map $h:(D^{2}, \partial D^{2})\rightarrow (M, \partial M)$ so that its image is an embedded disc (i.e. the union of $A_{\epsilon}$ and the image of $h_{\epsilon}$). It has the same property as $f$ and $h(\text{Int}~D^{2})\subset \text{Int}~M$.  
\end{remark}

\begin{lemma}\label{injective} Let $N'$ and $N$ be two closed handlebodies with $N'\subset \text{Int}~N$. The handlebody $N$ is  an effective handlebody relative to $N'$ if and only if  the map $\pi_{1}(\partial N) \rightarrow\pi_{1}(\overline{N\setminus N'})$ is injective.
\end{lemma}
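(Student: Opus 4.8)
\textbf{Proof proposal for Lemma \ref{injective}.}

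The plan is to prove both implications through the contrapositive, using the Loop lemma (Lemma \ref{loop}) together with Remark \ref{Int} for one direction, and a direct geometric argument for the other. Throughout, write $W:=\overline{N\setminus N'}$, a compact $3$-manifold whose boundary is $\partial N \amalg \partial N'$. The key point is that an embedded meridian disc $D$ of $N$ that misses the core of $N'$ can, after an isotopy, be pushed off $N'$ entirely, hence lives in $W$; conversely, a singular disc in $W$ bounding a non-contractible loop of $\partial N$ can be promoted to an embedded meridian disc missing $N'$ by the Loop lemma.

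First I would prove that if $N$ is \emph{not} effective relative to $N'$, then $\pi_1(\partial N)\to\pi_1(W)$ is \emph{not} injective. By hypothesis there is a meridian $\gamma$ of $N$ and a meridian disc $(D,\partial D)\subset(N,\partial N)$ with $\partial D=\gamma$ that is disjoint from the core $C'$ of $N'$. Since $N'$ deformation retracts onto $C'$, a disc disjoint from $C'$ can be isotoped within $N$ (rel boundary) off of $N'$: push $D$ along the retraction direction out of a regular neighborhood of $N'$. The resulting disc $D'$ is contained in $W$ and still has $\partial D' = \gamma \subset \partial N$ non-contractible in $\partial N$. If $\gamma$ were non-contractible in $W$ as well, then $\gamma$ would be a non-trivial element of $\pi_1(\partial N)$ killed in $\pi_1(N)$ (since $\gamma$ is a meridian) — but that alone does not yet contradict injectivity into $\pi_1(W)$; what I actually get is that $\gamma$ bounds a disc $D'$ in $W$, so $\gamma$ \emph{is} trivial in $\pi_1(W)$ while being non-trivial in $\pi_1(\partial N)$. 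Hence the map is not injective. (A subtlety: one must choose $\gamma$ non-contractible in $\partial N$, which is part of the definition of meridian, and check $D'$ remains embedded after the isotopy — standard general position.)

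Conversely, suppose $\pi_1(\partial N)\to\pi_1(W)$ is \emph{not} injective; I claim $N$ is not effective relative to $N'$. Pick a non-contractible loop $\alpha\subset\partial N$ that is nullhomotopic in $W$; represent the nullhomotopy as a map $f:(D^2,\partial D^2)\to(W,\partial N)$ with $f|_{\partial D^2}$ a loop freely homotopic in $\partial N$ to $\alpha$, in particular not nullhomotopic in $\partial N$. Apply the Loop lemma (Lemma \ref{loop}) to the $3$-manifold $W$ — noting $\partial N$ is one component of $\partial W$ — to obtain an embedding $h:(D^2,\partial D^2)\to(W,\partial N)$ with $h|_{\partial D^2}$ not nullhomotopic in $\partial N$, and by Remark \ref{Int} we may take $h(\mathrm{Int}\,D^2)\subset\mathrm{Int}\,W$. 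Set $\delta:=h(\partial D^2)$, an embedded circle in $\partial N$, non-contractible there, and bounding the embedded disc $h(D^2)\subset W\subset N$; thus $\delta$ is a meridian of $N$ (it is nullhomotopic in $N$ because it bounds a disc in $N$) whose meridian disc $h(D^2)$ is disjoint from $N'$ — a fortiori from the core of $N'$. So $\delta$ is a meridian of $N$ that is \emph{not} effective relative to $N'$, showing $N$ is not an effective handlebody relative to $N'$.

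The main obstacle I anticipate is the first direction: turning "the meridian disc $D$ misses the core of $N'$" into "$D$ can be isotoped into $W$." One has to be careful that "core" here means the spine (a wedge of $g$ circles, or the union of cocores of the $1$-handles — whichever convention Definition \ref{eff} intends), that a properly embedded disc disjoint from the spine lies in the complement of a regular neighborhood of the spine up to isotopy, and that this complement deformation retracts to $W$; equivalently one should argue directly that $\partial N \hookrightarrow N \setminus C'$ and $\partial N \hookrightarrow W$ induce the same map on $\pi_1$ onto the relevant quotient. A clean way to sidestep delicate isotopy arguments is to observe that $W$ is a deformation retract of $N$ minus a neighborhood of the core of $N'$, so "disjoint from the core" is exactly "can be taken in $W$"; I would phrase the argument that way. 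The second direction is essentially a direct invocation of the Loop lemma and Remark \ref{Int} and should be routine.
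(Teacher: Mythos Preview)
Your proposal is correct and follows essentially the same approach as the paper: both directions are proved by contrapositive, with the Loop lemma (and Remark \ref{Int}) supplying the embedded disc for the ``not injective $\Rightarrow$ not effective'' direction, and the existence of a meridian disc in $W=\overline{N\setminus N'}$ giving the other. You are in fact more careful than the paper on the first direction --- the paper simply asserts that non-effectiveness yields a meridian disc with $D\cap N'=\emptyset$, tacitly identifying ``misses the core'' with ``misses $N'$'' via the retraction you spell out.
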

\begin{proof}  If $N$ is not an effective handlebody relative to $N'$, there is a meridian disc $(D, \partial D)\subset ({N}, \partial N)$ with $D\cap N'=\emptyset$. Therefore, the map $\pi_{1}(\partial N)\rightarrow\pi_{1}(\overline{N\setminus N'})$ is not injective.

If the map $\pi_{1}(\partial N)\rightarrow \pi_{1}(\overline{N\setminus N'})$ is not injective,  we use Lemma \ref{loop} to find an embedded disc $(D', \partial D')\subset (\overline{N\setminus N'}, \partial N)$ whose boundary is not contractible in $\partial N$. As in Remark \ref{Int}, we may assume that $\text{Int} ~D' \subset \text{Int} ~(N\setminus N')$. We see that $D'$ is a meridian disc with $D'\cap N'=\emptyset$. Therefore, $N$ is not an effective handlebody relative to $N'$. 
This finishes the proof. \end{proof}

We now introduce some notations about circles in a disc.
\begin{definition}(See Definition 2.11 of \cite{W1})\label{outmost}Let $C:=\{c_{i}\}_{i\in I}$ be a finite set of pairwise disjoint circles in the disc $D^{2}$ and $D_{i}\subset D^{2}$ the unique disc with boundary $c_{i}$. Consider the set $\{D_{i}\}_{i\in I}$ and define a partially ordered relation induced by the inclusion. For each maximal element $D_{j}$ in $(\{D_{i}\}_{i\in I}, \subset)$, its boundary $c_{j}$ is defined as a \emph{maximal circle} in $C$. For each minimal element $D_{j}$, its boundary $c_{j}$ is called a \emph{minimal circle} in $C$.
\end{definition}

\begin{lemma}\label{one} Let $N'$ and $N$ be two closed handlebodies satisfying 1) $N'\subset \text{Int}~N$ and 2) $\pi_{1}(\partial N')\rightarrow \pi_{1}(\overline{N\setminus N'})$ is injective. If $N$ is an effective handlebody relative to $N'$, then any meridian disc $(D, \partial D)\subset ({N}, \partial N)$ contains a meridian of $N'$. 
\end{lemma}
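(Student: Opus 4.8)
The plan is to take a meridian disc $(D,\partial D)\subset(N,\partial N)$ and intersect it with $\partial N'$, then argue that among the resulting circles at least one must be a meridian of $N'$. First I would put $D$ in general position with respect to $\partial N'$, so that $D\cap \partial N'$ is a finite disjoint union of embedded circles in $\text{Int}\,D$ (the boundary $\partial D\subset\partial N$ is disjoint from $\partial N'$ since $N'\subset\text{Int}\,N$). If this intersection is empty, then $D$ lies entirely in $\overline{N\setminus N'}$ or entirely in $N'$; the first possibility contradicts the hypothesis that $N$ is an effective handlebody relative to $N'$ (a meridian disc disjoint from $N'$ is exactly a witness of non-effectiveness), and the second is impossible because $\partial D\subset\partial N$ cannot lie in $N'\subset\text{Int}\,N$. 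So we may assume $D\cap\partial N'\neq\emptyset$.

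Next I would look at the circles of $D\cap\partial N'$ as circles in the disc $D$ and use the notion of innermost (minimal) circle from Definition \ref{outmost}: pick a minimal circle $c\subset D\cap\partial N'$, bounding an innermost subdisc $D_c\subset D$ with $\text{Int}\,D_c\cap\partial N'=\emptyset$. Then $\text{Int}\,D_c$ lies in one side of $\partial N'$ — either in $\text{Int}\,N'$ or in $\text{Int}(N\setminus N')$ — and in either case $(D_c,\partial D_c=c)$ is an embedded disc meeting $\partial N'$ only along its boundary. The key dichotomy is whether $c$ is nullhomotopic in $\partial N'$ or not.

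If $c$ is \emph{not} nullhomotopic in $\partial N'$, then by definition $c$ is a meridian of $N'$ provided $D_c$ lies in $N'$, or else $D_c$ lies in $\overline{N\setminus N'}$; but in the latter case the injectivity hypothesis $\pi_1(\partial N')\to\pi_1(\overline{N\setminus N'})$ is contradicted, since $D_c$ would exhibit $[c]$ as trivial there while $[c]\neq 0$ in $\pi_1(\partial N')$. Hence $D_c\subset N'$, so $c$ bounds a disc in $N'$, i.e. $c$ is a meridian of $N'$ and $D$ contains it — done. The remaining case is that \emph{every} innermost circle of $D\cap\partial N'$ is nullhomotopic in $\partial N'$. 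Here the strategy is the standard innermost-circle surgery: an innermost nullhomotopic circle $c$ bounds a subdisc $\delta\subset\partial N'$; replace the subdisc $D_c$ of $D$ by $\delta$ and push slightly off $\partial N'$, obtaining a new disc $D'$ with the same boundary $\partial D$ but strictly fewer intersection circles with $\partial N'$ (one has to check, using irreducibility of the ambient $3$-manifold from the Poincaré Conjecture, that this replacement does not create a non-embedded or a non-meridian disc, and that after a small isotopy $D'$ is again embedded with $D'\cap\partial N'$ a proper subset). Iterating, we reduce to the case $D\cap\partial N'=\emptyset$, which we already excluded. Therefore some innermost circle is non-nullhomotopic in $\partial N'$, and the previous paragraph produces the desired meridian of $N'$.

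The main obstacle I anticipate is the bookkeeping in the innermost-disc surgery: one must ensure that each replacement genuinely decreases the number of intersection circles (rather than merely trading one for another), that embeddedness is preserved after the small isotopy pushing off $\partial N'$, and — most delicately — that the surgery is compatible with both hypotheses simultaneously, i.e. that we never need to surger along a circle whose innermost disc sits on the "wrong" side in a way that would already have triggered the effective/injective contradiction. Handling the two sides of $\partial N'$ uniformly, and invoking irreducibility of $N$ (equivalently, that $2$-spheres bound balls) to discard trivial components created by surgery, is where the care lies; the homotopy-theoretic conclusion itself is then immediate from the definitions of meridian, effective handlebody, and the injectivity assumption.
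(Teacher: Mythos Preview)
Your approach is sound in outline but takes a different route from the paper and has a small wobble in the iteration. The paper avoids surgery altogether: it considers directly the set $C^{non}$ of intersection circles that are \emph{not} nullhomotopic in $\partial N'$, and shows $C^{non}\neq\emptyset$ in a single step by capping off the \emph{maximal} circles (not the innermost ones) with the discs they bound in $\partial N'$, producing an immersed disc $\hat D\subset\overline{N\setminus N'}$ with boundary $\gamma$---this contradicts effectiveness via Lemma~\ref{injective}. Then a \emph{minimal} circle $c_j\in C^{non}$ is shown to be a meridian by the same cap-off trick applied inside the sub-disc $D_j\subset D$, with the injectivity of $\pi_1(\partial N')\to\pi_1(\overline{N\setminus N'})$ used to force the resulting immersed disc into $N'$ rather than its complement. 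Everything is homotopical; no embeddedness after surgery, no irreducibility, no push-off isotopies are needed.

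Your iterated innermost-surgery argument works, but the difficulties you anticipate are largely self-imposed: at the terminal step you only need the (possibly immersed) disc to witness $[\gamma]=0$ in $\pi_1(\overline{N\setminus N'})$, and Lemma~\ref{injective} converts that into the contradiction with effectiveness---embeddedness and ambient irreducibility play no role. More substantively, your conclusion ``therefore some innermost circle of $D$ is non-nullhomotopic'' does not follow as stated: each surgery replaces $D$ by a new disc $D', D'', \dots$, and a circle that was not innermost in $D$ may become innermost only in some later $D^{(k)}$. What your argument actually delivers is a dichotomy at each stage---either the current disc has a non-nullhomotopic innermost circle (which still lies in $D\cap\partial N'$, hence is the desired meridian in $D$), or all its innermost circles are nullhomotopic and you surger once more. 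Phrased that way the induction is complete. The paper's one-shot maximal/minimal cap-off buys exactly this conclusion without the bookkeeping.
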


The proof is the same as the proof of Lemma 2.12 in \cite{W1}.
\begin{proof} 
Suppose that the closed meridian disc $D$ intersects $\partial N'$ transversally where $\gamma:=\partial D$ is a meridian of $N$. The intersection $D\cap \partial N'$ is a disjoint union of  circles $\{c_{i}\}_{i\in I}$. Each $c_{i}$ bounds a unique closed disc $D_{i}\subset \text{Int}~D$.
 
  Consider the set $C^{non}:=\{c_{i}~|~ c_{i}~ \text{is homotopically trivial in }~ \partial N'\}$ and the set $C^{max}=\{c_{i}|~c_{i}$ is a maximal circle in $\{c_{i}\}_{i\in I}\}$.

We will show that $C^{non}$ is nonempty and a minimal circle in $C^{non}$ is a desired meridian.

 Suppose the contrary that $C^{non}$ is empty. 
Hence, each $c_{i}\in C^{max}$ is contractible in $\partial N'$ and bounds a disc $D'_{i} \subset \partial N'$.
Consider the immersed disc $$\hat{D}:=(D\setminus\cup_{c_{i}\in C^{max}}D_{i})\cup(\cup_{c_{i}\in C^{max}}D'_{i})$$ with boundary $\gamma$. Since $\hat{D}\cap \text{Int}~N'=\emptyset$, we see that $\gamma$ is contractible in $\overline{N\setminus N'}$.

However, Lemma \ref{injective} shows that the map $\pi_{1}(\partial N)\rightarrow \pi_{1}(\overline{N\setminus N'})$ is injective. That is to say, the circle $\gamma$ is null-homotopic in $\partial N$. This is in contradiction with our hypothesis that $\gamma$ is non-trivial in $\pi_{1}(\partial N)$. We conclude that $C^{non}\neq\emptyset$.

\vspace{2mm}

In the following, we will prove that each minimal circle $c_{j}$ in $C^{non}$ is a required meridian. From Definition \ref{mer}, it is sufficient to show that $c_{j}$ is homotopically trivial in $N'$.  Our strategy is to  construct an immersed disc $\hat{D}_{j}\subset {N'}$ with boundary $c_{j}$.

Let $C_{j}:=\{c_{i}~| c_{i}\subset \text{Int} ~D_{j}$ for $i\in I\}$ and  $C_{j}^{max}$ be the set of maximal circles in $C_{j}$. We now have two cases:  $C_{j}=\emptyset$ or $C_{j}\neq \emptyset$. 

\textbf{Case I:} If $C_{j}$ is empty, we consider the set $Z:=\text{Int}~D_{j}$ and define the disc $\hat{D}_{j}$ as $\text{Int}~D_{j}$. 

\textbf{Case II:} If $C_{j}$ is not empty, then $C^{max}_{j}$ is also nonempty. 
From the minimality of $c_{j}$ in $C^{non}$, each $c_{i}\in C^{max}_{j}$ is null-homotopic in $\partial N'$ and bounds a disc $D''_{i}\subset\partial N'$. 

Define  the set $Z:=\text{Int}~D_{j}\setminus \cup_{c_{i}\in C^{max}_{j}}{D}_{i}$ and the new disc $\hat{D}_{j}:=Z\cup(\cup_{c_{i}\in C^{max}_{j}} D''_{i})$ with boundary $c_{j}$. 

\vspace{2mm}

Let us explain why $\hat{D}_{j}$ is contained in ${N'}$. In any case, $\partial N'$ cuts $N$ into two connected components, $N\setminus N'$ and $\text{Int}~N'$. The set $Z$ is one of these components of $\text{Int}~D_{j}\setminus \partial N'$. Therefore, it must be contained in $\text{Int}~N'$ or $N\setminus {N'}$.

If $Z$ is in $N\setminus {N'}$, then the disc $\hat{D}_{j}$  is contained in $\overline{N\setminus N'}$. Namely, $c_{j}$ is contractible in $\overline{N\setminus N'}$. However, since the induced map $\pi_{1}(\partial N')\rightarrow\pi_{1}(\overline{N\setminus N'})$ is injective, then $c_{j}$ is homotopically trivial in $\partial N'$. This contradicts  the choice of $c_{j}\in C^{non}$. We conclude that $Z$ is contained in $\text{Int}~N'$.

Therefore, $\hat{D}_{j}$ is contained in ${N'}$. That is to say, $c_{j}$ is null-homotopic in ${N'}$. However, it is non-trivial in $\pi_{1}(\partial N')$. From Definition \ref{mer}, we conclude that $c_{j}\subset D$ is a meridian of $N'$. This finishes the proof. 
\end{proof}

As a consequence, we have 

\begin{corollary}\label{general one} Let $N'$ and $N$ be two closed handlebodies in a contractible $3$-manifold $M$ satisfying that 1) $N' \subset \text{Int}~N$ and 2) the map $\pi_{1}(\partial N')\rightarrow \pi_{1}(\overline{M\setminus N'})$ is injective.  If an embedded circle $\gamma\subset \partial N$ is not null-homotopic in $\overline{M\setminus N'}$, then any embedded disc $D\subset M$ with boundary $\gamma$ contains a meridian of $N'$. 
\end{corollary}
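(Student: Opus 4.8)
The plan is to reduce Corollary \ref{general one} to Lemma \ref{one} by manufacturing an auxiliary handlebody that plays the role of ``$N$'' in the lemma. The issue is that in the corollary the circle $\gamma$ lives on $\partial N$ but the disc $D$ may wander through all of $M$, not just $N$; also $\gamma$ need not be a meridian of $N$ in any useful sense, only non-nullhomotopic in $\overline{M\setminus N'}$. So $N$ itself is not the right object to feed into Lemma \ref{one}.

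First I would take the embedded disc $(D,\partial D)\subset (M,\partial N)$ with $\partial D=\gamma$, and perturb it so that $D$ meets $\partial N'$ transversally; I may also push $\mathrm{Int}\,D$ off $\partial N'$ near $\gamma$ if needed. Consider a small closed tubular neighbourhood $\widetilde N$ of $N'$ in $M$ chosen so that $\widetilde N$ is still a closed handlebody homeomorphic to $N'$, with $N'\subset \mathrm{Int}\,\widetilde N$, and so that the inclusion induces the same fundamental-group data: the map $\pi_1(\partial\widetilde N)\to\pi_1(\overline{M\setminus \widetilde N})$ is injective because it agrees up to the homotopy equivalence $\overline{M\setminus\widetilde N}\simeq \overline{M\setminus N'}$ with the given injective map $\pi_1(\partial N')\to\pi_1(\overline{M\setminus N'})$. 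Now the key step: replace the ambient reasoning by working inside the handlebody $\widetilde N$. One shows that $\gamma$, being non-nullhomotopic in $\overline{M\setminus N'}$ (equivalently in $\overline{M\setminus\widetilde N}$), is a meridian of some handlebody obtained from $\widetilde N$ — or, more directly, run the \emph{same circle-surgery argument} as in the proof of Lemma \ref{one} on the circles $D\cap\partial N'$ directly: let $C^{non}$ be the circles of $D\cap \partial N'$ not contractible in $\partial N'$, and $C^{max}$ the maximal circles among all of $D\cap\partial N'$.

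The heart of the argument is then exactly the dichotomy in the proof of Lemma \ref{one}, carried out verbatim with $M$ in place of $N$: if $C^{non}=\emptyset$, then capping off the maximal circles (each bounding a disc in $\partial N'$) produces an immersed disc $\hat D$ with $\hat D\cap \mathrm{Int}\,N'=\emptyset$, hence $\gamma$ is contractible in $\overline{M\setminus N'}$, contradicting the hypothesis on $\gamma$. So $C^{non}\neq\emptyset$, and for a minimal circle $c_j\in C^{non}$ one builds the disc $\hat D_j$ using the set $Z$ (the relevant component of $\mathrm{Int}\,D_j\setminus\partial N'$) and caps; since $\partial N'$ separates a neighbourhood of $N'$ into $\mathrm{Int}\,N'$ and the rest, $Z$ lies either in $\mathrm{Int}\,N'$ or in $\overline{M\setminus N'}$; the latter would force $c_j$ nullhomotopic in $\overline{M\setminus N'}$, hence (by injectivity of $\pi_1(\partial N')\to\pi_1(\overline{M\setminus N'})$) nullhomotopic in $\partial N'$, contradicting $c_j\in C^{non}$. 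Therefore $Z\subset \mathrm{Int}\,N'$, so $\hat D_j\subset N'$, so $c_j$ is nullhomotopic in $N'$ but not in $\partial N'$; by Definition \ref{mer}, $c_j\subset D$ is a meridian of $N'$.

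The point I expect to require the most care is the separation claim used twice: that $\partial N'$ locally separates $M$ into $\mathrm{Int}\,N'$ and $\overline{M\setminus N'}$ and that every component of $\mathrm{Int}\,D_j\setminus\partial N'$ lands entirely on one side — this is where the proof of Lemma \ref{one} used that $\partial N'$ cuts $N$ into exactly two pieces, and here one must replace ``$N$'' by ``$M$'' and check that $\partial N'$ still bounds $N'$ on one side (true since $N'$ is a closed handlebody embedded in $M$) and that the complement $\overline{M\setminus N'}$ is connected (true: $M$ is connected, $N'$ is a handlebody, so removing its interior keeps things connected). Everything else is a transcription of the proof of Lemma \ref{one}, with the observations that (i) one only ever uses injectivity of $\pi_1(\partial N')\to\pi_1(\overline{M\setminus N'})$ rather than the corresponding statement about $N$, (ii) the hypothesis ``$\gamma$ non-nullhomotopic in $\overline{M\setminus N'}$'' is precisely what replaces ``$N$ effective relative to $N'$'' in the $C^{non}=\emptyset$ step, so no effectivity of an ambient handlebody is needed. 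Hence one may simply say: \emph{the proof is identical to that of Lemma \ref{one}, reading $M$ for $N$ and using hypothesis (2) in place of Lemma \ref{injective}.}
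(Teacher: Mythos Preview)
Your proposal is correct and lands on exactly the approach the paper intends: the paper simply states that the proof is the same as Lemma~\ref{one}, and your final paragraph spells this out precisely---read $M$ for $N$, use the hypothesis that $\gamma$ is non-nullhomotopic in $\overline{M\setminus N'}$ in place of effectivity for the $C^{non}\neq\emptyset$ step, and use the injectivity of $\pi_1(\partial N')\to\pi_1(\overline{M\setminus N'})$ for the $Z$-step. The auxiliary handlebody $\widetilde N$ you introduce at the start is unnecessary and can be dropped, as you yourself realize.
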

The proof is the same as Lemma \ref{one}. 
\subsection{Non-sparating meridians }

\begin{lemma}\label{sys}
For a closed handlebody $N$ of genus $g$, there are $g$ disjoint non-separating meridians $\{\gamma^{l}\}^{g}_{l=1}$ so that  $N\setminus \amalg_{l} N_{\epsilon_{l}} (D_{l})$ is a closed 3-ball, where  $D_{l}$ is a closed meridian disc with boundary $\gamma^{l}$ and $N_{\epsilon_{l}}(D_{l})$ is an open neighborhood of $D_{l}$ in $N$ with small radius $\epsilon_{l}$ . 
\end{lemma}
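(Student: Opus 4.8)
\textbf{Proof proposal for Lemma \ref{sys}.}

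The plan is to induct on the genus $g$ using the handle decomposition of $N$ directly. Recall that $N$ is obtained from a $0$-handle $D^3$ by attaching $1$-handles $H_1,\dots,H_g$, where $H_l \cong D^2\times[-1,1]$ is glued along $D^2\times\{\pm 1\}$ to two disjoint discs in $\partial D^3$. For each $l$, let $D_l := \{0\}\times[-1,1]$ inside $H_l$ be the co-core of the $l$-th handle, capped off with a small disc on each side inside the $0$-handle so that $(D_l,\partial D_l)\subset (N,\partial N)$ is a properly embedded disc; more precisely, take $D_l$ to be a properly embedded disc meeting $H_l$ in its co-core and disjoint from all other handles. These $g$ discs are pairwise disjoint by construction. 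I would first check that each $\gamma^l := \partial D_l$ is a non-splitting meridian: it is nullhomotopic in $N$ because $D_l$ is a disc in $N$ bounding it; it is non-contractible in $\partial N$ because the algebraic intersection of $\gamma^l$ with the co-core circle of the $l$-th handle on $\partial N$ is $\pm 1$ (equivalently $[\gamma^l]$ is a standard generator of $H_1(\partial N;\mathbb{Z})\cong\mathbb{Z}^{2g}$); and $N\setminus D_l$ is connected because cutting along the co-core of a single $1$-handle of a connected handlebody of genus $g\geq 1$ leaves a connected handlebody of genus $g-1$ (it just undoes one handle attachment).

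Next I would make the "cutting" statement precise. Removing a small open neighborhood $N_{\epsilon_l}(D_l)$ of $D_l$ from $N$ is homeomorphic to removing a neighborhood of the co-core of $H_l$, which is the same as deleting the open $1$-handle $H_l$ altogether (up to homeomorphism rel the rest), producing a handlebody of genus $g-1$ with the remaining $1$-handles $H_1,\dots,\widehat{H_l},\dots,H_g$. Doing this for all $l=1,\dots,g$ in turn — and here one uses that the $D_l$ (hence the neighborhoods $N_{\epsilon_l}(D_l)$, for $\epsilon_l$ small enough) are pairwise disjoint, so the operations commute — removes all $g$ $1$-handles, leaving exactly the $0$-handle $D^3$, a closed $3$-ball. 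Formally this is an induction: the genus $0$ case is the statement that $N=D^3$ is already a ball with an empty system of meridians, and the inductive step cuts along $D_g$ to reduce to a genus $g-1$ handlebody $N'=\overline{N\setminus N_{\epsilon_g}(D_g)}$, applies the inductive hypothesis to get non-splitting meridians $\{\gamma^l\}_{l=1}^{g-1}$ of $N'$ with disjoint meridian discs $\{D_l\}_{l=1}^{g-1}$ in $N'\subset N$, and checks that these remain non-splitting meridians of $N$ (their discs still lie in $N$ and still avoid the cores of the handles other than $H_l$, so $N\setminus D_l$ stays connected — cutting a genus $\geq 1$ handlebody along one co-core disc keeps it connected).

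The main technical point — the step I expect to require the most care — is verifying that the choice of meridian discs can be made simultaneously disjoint and compatible with the successive cutting, i.e. that $\overline{N\setminus\amalg_{l=1}^g N_{\epsilon_l}(D_l)}$ is genuinely a $3$-ball and not merely a genus-$0$ piece with extra structure, and that "non-splitting" is preserved at each stage rather than only at the last. This is really a bookkeeping issue about handle decompositions: as long as one fixes the standard co-core discs $D_l$ of the given handle decomposition at the outset (rather than choosing them one at a time after cutting), they are automatically pairwise disjoint, and for $\epsilon_l$ small the neighborhoods are too; then $N\setminus\amalg_l N_{\epsilon_l}(D_l)$ deformation retracts onto the $0$-handle, which is a ball, and since it is a compact $3$-manifold homotopy equivalent to a point with $2$-sphere boundary, irreducibility (or directly, the structure of the handle decomposition) identifies it with $D^3$. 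One should also note $g\geq 1$ is needed for the non-splitting claim, but the case $g=0$ is vacuously covered. This completes the proof.
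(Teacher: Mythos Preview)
Your proof is correct and takes a genuinely different route from the paper. The paper argues inductively and abstractly: it picks an arbitrary non-splitting meridian $\gamma^{1}$, invokes the Loop Lemma (Lemma~\ref{loop}) to produce an embedded meridian disc $D_{1}$, cuts along a tubular neighborhood of $D_{1}$ to obtain a genus-$(g-1)$ handlebody $N_{1}$, then chooses a non-splitting meridian $\gamma^{2}$ of $N_{1}$ lying in $\partial N\cap\partial N_{1}$, and repeats. Your approach instead exploits the given handle decomposition directly, taking the co-core discs of the $1$-handles as the $D_{l}$ from the outset; this makes the discs pairwise disjoint by construction and avoids the Loop Lemma entirely. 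Both arguments are valid; yours is more elementary and concrete, while the paper's is more in keeping with its systematic use of the Loop Lemma throughout and does not require fixing a particular handle presentation in advance.

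One small correction: the co-core of the $1$-handle $H_{l}\cong D^{2}\times[-1,1]$ (attached along $D^{2}\times\{\pm 1\}$) is the disc $D^{2}\times\{0\}$, not the arc $\{0\}\times[-1,1]$ (that is the \emph{core}). The co-core is already a properly embedded disc in $N$ with boundary $\partial D^{2}\times\{0\}\subset\partial D^{2}\times[-1,1]\subset\partial N$, so no ``capping off'' is needed. Your ``more precisely'' clause describes the right object, so the argument goes through once the formula is fixed.
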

The set of these meridians $\{\gamma^{l}\}_{l=1}^{g}$ is called \emph{a system of the handlebody $N$ of genus $g$}. It is not unique. 
\begin{proof}Pick any non-separating meridian $\gamma^{1}$ of $N$. We use Lemma \ref{loop} to find an embedded disc $D_{1}\subset N$.

As in Remark \ref{Int}, we may assume that $\text{Int}~ D_{1}\subset \text{Int} ~N$.
 The set $N_{1}:=N\setminus N_{\epsilon}(D_{1})$ is a closed handlebody of genus $g-1$, where $N_{\epsilon_{1}}(D_{1})$ is the open tubular neighborhood of $D_{1}$ in $N$ with small radius $\epsilon_{1}$. In particular,  the map $\pi_{1}(\partial N\cap \partial N_{1})\rightarrow \pi_{1}(\partial N_{1})$ is surjective.  
 
Choose a non-separating meridian $\gamma^{2}\subset \partial N\cap \partial N_{1}$ of $ N_{1}$. By Lemma \ref{loop}, there exists a meridian disc $D_{2}$ of $N_{1}=N\setminus N_{\epsilon_{1}}(D_{1})$. The set $N_{2}:=N\setminus N_{\epsilon_{1}} (D_{1})\amalg N_{\epsilon_{2}}(D_{2})$ is a closed handlebody of genus $g-2$, where $N_{\epsilon_{2}}(D_{2})$ is an open tubular neighborhood of $D_{2}$ in $N$. 

We repeat this process $g-2$ times and obtain $g$ disjointly embedded discs $\{D_{l}\}$ so that  $N\setminus \amalg_{l} N_{\epsilon_{l}}(D_{l})$ is a handlebody of genus zero (a 3-ball).  The boundaries $\{\gamma^{l}\}^{g}_{l=1}$ of these discs are $g$ distinct meridians which are the required candidates in the assertion.   \end{proof}

\begin{corollary}\label{inter} Let $N\subset M$, $\{\gamma^{l}\}$ and $\{D_{l}\}$ be as in Lemma \ref{sys}, where $M$ is a 3-manifold without boundary. If $R\subset \text{Int}~N$ is a closed handlebody satisfying that 1) it is not contained in a 3-ball in $M$; 2) $\pi_{1}(\partial R)\rightarrow \pi_{1}(\overline{M\setminus R})$ is injective, then $\partial R\cap \amalg_{l} D_{l}$ contains at least a meridian of $R$.
\end{corollary}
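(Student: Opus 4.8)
The plan is to combine the decomposition of $N$ from Lemma~\ref{sys} with the effectiveness dichotomy of Lemma~\ref{injective} and the ``disc contains a meridian'' statement of Lemma~\ref{one} (in fact its ambient version, Corollary~\ref{general one}). Write $N\setminus\amalg_l N_{\epsilon_l}(D_l)=B$, a closed $3$-ball, so that $N$ is recovered from $B$ by attaching the $g$ one-handles $\overline{N_{\epsilon_l}(D_l)}$. First I would observe that since $R$ is not contained in a $3$-ball in $M$, and $B$ is a $3$-ball in $M$, we cannot have $R\subset B$; nor can $R$ lie in $B$ together with only a proper subcollection of the handles, since attaching handles to $B$ along discs disjoint from $R$ still produces a handlebody that is $3$-ball--realizable after isotopy — more carefully, I would argue that $R$ must ``use'' every handle in the sense that for each $l$ the core of $R$ cannot be isotoped off $D_l$. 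The clean way to phrase this: if $\partial R\cap D_l=\emptyset$ for some $l$, then $R$ is contained in one component of $N\setminus D_l$; but $N\setminus D_l$ deformation retracts onto a handlebody of genus $g-1$ sitting inside a $3$-ball-plus-fewer-handles configuration, and iterating over all $l$ with empty intersection would force $R$ into a $3$-ball, contradicting hypothesis (1). Hence $\partial R\cap\amalg_l D_l\neq\emptyset$, and moreover $\partial R$ meets at least one $D_l$ essentially.

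Next, fix an $l$ with $\partial R\cap D_l\neq\emptyset$ and consider the meridian $\gamma^l=\partial D_l$ of $N$. The key claim is that $\gamma^l$ is not nullhomotopic in $\overline{M\setminus R}$: if it were, then $D_l$ together with a nullhomotopy would show $\gamma^l$ bounds in $M\setminus\operatorname{Int}R$, and by injectivity of $\pi_1(\partial R)\to\pi_1(\overline{M\setminus R})$ — wait, that injectivity is about $\partial R$, not $\partial N$. Instead I would argue directly: were $\gamma^l$ nullhomotopic in $\overline{M\setminus R}$, then by the loop lemma applied to $\overline{M\setminus R}$ (as in Remark~\ref{Int}) there would be an embedded meridian disc for $\gamma^l$ missing $R$, contradicting the effectiveness forced by step one (that the core of $R$ cannot be pushed off $D_l$, i.e. every disc bounding $\gamma^l$ meets $R$). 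Granting $\gamma^l$ is essential in $\overline{M\setminus R}$, apply Corollary~\ref{general one} with $N'=R$, with the handlebody $N$ there, and with the disc $D_l$: since $\gamma^l=\partial D_l$ is not nullhomotopic in $\overline{M\setminus R}$ and hypothesis~(2) gives injectivity of $\pi_1(\partial R)\to\pi_1(\overline{M\setminus R})$, the disc $D_l$ contains a meridian of $R$. As $D_l\subset\amalg_l D_l$, this is exactly the assertion.

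The main obstacle I anticipate is the first step — showing rigorously that $R$ cannot avoid all the $D_l$, and more delicately that at least one $D_l$ is met \emph{essentially} in the sense needed for the effectiveness input. The subtlety is that $\partial R$ could in principle meet some $D_l$ only in inessential (contractible-in-$D_l$) circles that could be removed by isotopy; one must rule this out using the irreducibility of $M$ and the ball decomposition of $N$, pushing such trivial intersections off and reducing to a configuration where $R$ sits in $B$ union some handles whose attaching discs $R$ genuinely links, or else in a $3$-ball. I would handle this by an innermost-circle argument on $D_l\cap\partial R$ combined with the observation that after removing all inessential intersections, a genuinely-linked handle must survive, for otherwise $R$ embeds in a $3$-ball contradicting~(1). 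Once that combinatorial/topological reduction is in place, the rest is a direct citation of Lemma~\ref{injective}, Lemma~\ref{loop}/Remark~\ref{Int}, and Corollary~\ref{general one}.
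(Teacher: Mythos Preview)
Your overall strategy matches the paper's, and your final paragraph correctly identifies both the obstacle and its resolution; but the argument as organized in your step~2 is circular. You fix a single $l$ with $\partial R\cap D_l\neq\emptyset$ and claim that ``step one forces every disc bounding $\gamma^l$ to meet $R$.'' Step one shows nothing of the sort: it only shows that the \emph{particular} discs $D_l$ cannot all be simultaneously avoided, not that any individual $\gamma^l$ fails to bound a disc missing $R$. If $\gamma^l$ happened to bound an embedded disc $D'_l\subset M\setminus R$, you would have no contradiction from step one alone --- you would need to feed $D'_l$ back into the iteration together with the remaining $D_j$'s, and those remaining $D_j$'s may themselves need to be replaced. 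That bookkeeping is exactly the content of the proof, not a preliminary to it.

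The paper handles this by working globally from the start. Set $C^{non}=\{c\subset\partial R\cap\amalg_l D_l:\ c\text{ is not contractible in }\partial R\}$. If $C^{non}=\emptyset$, then surgering each $D_l$ along the (innermost) discs in $\partial R$ bounded by the intersection circles --- precisely the construction in the proof of Lemma~\ref{one} --- shows each $\gamma^l$ is nullhomotopic in $\overline{N\setminus R}$. Now apply the Loop Lemma inside $\overline{N\setminus R}$ to get an embedded $D'_1$ with $\partial D'_1=\gamma^1$; cut to obtain a genus $g-1$ handlebody $N'_1\supset R$ in which $\gamma^2,\dots,\gamma^g$ remain a system of meridians and remain contractible off $R$; iterate $g$ times to place $R$ in a $3$-ball, contradicting (1). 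Hence $C^{non}\neq\emptyset$, and then any minimal circle in $C^{non}$ is a meridian of $R$ by the second half of the proof of Lemma~\ref{one}, using hypothesis (2). Your route through Corollary~\ref{general one} for this last step is fine in spirit, but note that corollary is stated for contractible $M$, whereas Corollary~\ref{inter} assumes only that $M$ is a $3$-manifold without boundary; the paper simply repeats the Lemma~\ref{one} argument directly.
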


The proof is also similar to the proof of Lemma 2.12 in \cite{W1}. 

\begin{proof}
We may assume that $\partial R$ intersects $\amalg_{l}D_{l}$ transversally. The intersection $\partial R\cap \amalg D_{l}:=\{\gamma\}_{\gamma\in C}$ has finitely many components. 
Let us consider the set $C^{non}:=\{\gamma\in C$ is not homotopically trivial in $\partial R\}$. 

\vspace{2mm}
\noindent\textbf{Claim: } $C^{non}$ is nonempty. \par

We argue by contradiction. Suppose that $C^{non}$ is empty.  We see that any circle in $D_{l}\cap \partial R$ is  null-homotopic in $\partial R$. As in the proof of Lemma \ref{one},  we get a new disc in $\overline{N\setminus R}$ with boundary $\gamma^{l}$. Therefore, each $\gamma^{l}$ is null-homotopic in $\overline{N\setminus R}$. 

We use Lemma \ref{loop} to find a meridian disc $D'_{1}\subset \overline{N\setminus R}$ with boundary $\gamma^{1}$. As in Remark \ref{Int}, we may assume that $\text{Int}~D'_{1}\subset \text{Int}~\overline{N\setminus R}$ (or $D'_{1}\subset N\setminus R$).
Choose an open  tubular neighborhood $N_{\epsilon'_{1}}(D'_{1})$ of $D'_1$ in $N\setminus R$ with small radius $\epsilon'_{1}$. The set $N'_{1}:=N\setminus N_{\epsilon'_{1}}(D'_{1})$ is a closed handlebody of genus $g-1$ containing $R$.

  In addition, for $l>1$, $\gamma^{l}$ is a non-separating meridian of $N'_{1}$ but null-homotopic in $N\setminus (N_{\epsilon'_{1}}(D'_{1})\amalg R)$.

Repeating this process $g-1$ times, we obtain $g$ embedded discs $\{D'_{l}\}_{l=1}^g$ so that 

1) $R\cap \amalg_{l}N_{\epsilon'_{l}}(D'_{l})=\emptyset$;  

2) The handlebody $N\setminus \amalg_{l} N_{\epsilon'_{l}} (D'_{l})$ is of genus zero (a closed $3$-ball),
 
\noindent where $N_{\epsilon'_{l}}(D'_{l})$ is an open tubular neighborhood of $D'_{l}$ in $N$ with small radius $\epsilon'_{l}$.  

Therefore, $R$ is contained in the $3$-ball $N\setminus \amalg_{l} N_{\epsilon'_{l}}(D'_{l})$. This contradicts our hypothesis. It finishes the proof of the claim.

As in the proof of Lemma \ref{one}, we use the condition 2) to show that each minimal circle in $C^{non}$ is a required meridian.
\end{proof}

\section{Effective Handlebodies}

\subsection{Surgeries}
Consider  two closed handlebodies $N'$ and $N$ in a contractible $3$-manifold $M$ with $N'\subset \text{Int}~N$. We  introduce two types of surgeries on handlebodies: 

\vspace{2mm}
\noindent \textbf{Type I}: If there exists a meridian disc $D\subset N\setminus N'$ of $N$, then  we consider an open tubular neighborhood $N_{\epsilon}(D)\subset N\setminus N'$ of $D$. We then have two cases:

Case (1): If $D$ is a separating meridian disc, $N\setminus N_{\epsilon}(D)$ has two components. The closed handlebody $W_{1}$ is defined as the component containing $N'$;

Case(2): If $D$ is a non-separating meridian disc, $N\setminus N_{\epsilon}(D)$ is connected. The closed handlebody $W_{1}$ is defined by $N\setminus N_{\epsilon}(D)$. 

\vspace{2mm}
\noindent \textbf{Type II}: Let $\gamma$ be a non-contractible circle in $\partial N$ and  $D_{1}\subset \overline{M\setminus N}$ an embedded disc  with   boundary $\gamma\subset \partial N$. We may assume that $\text{Int}~D_{1}\subset M\setminus N$. 

Case (1): If $\gamma$ is not homotopically trivial in $N$, we consider a closed tubular neighborhood $\overline{N_{\epsilon_{1}}(D_{1})}$ of $D_{1}$ in $\overline{M\setminus N}$. Define a new handlebody $W_{2}$ as $N\cup \overline{N_{\epsilon_{1}}(D_{1})}$. 

Case (2): If $\gamma$ is null-homotopic in $N$, it is a meridian of $N$. Consider a meridian disc $D_2$ of $N$ with boundary $\gamma$ and the embedded sphere $D_1\cup_{\gamma}D_2$.  The Poincar\'e conjecture (see \cite{BBBMP}, \cite{MT} and \cite{cao-zhu}) implies that  the sphere $D_1\cup_{\gamma}D_2$ bounds a $3$-ball $B$. 

We can conclude that $D_2$ is a separating meridian disc. (Otherwise, we find a circle $\gamma_0\subset N$ so that the intersection number of the sphere $D_1\cup_\gamma D_2$ and $\gamma_0$ is $\pm 1$. However, since any sphere in $M$ contractible, their intersection must be zero, a contradiction.)

Therefore, $B$ contains one of components of $N\setminus D_2$ and the union $B\cup N$ is also a handlebody. The new handlebody  $W_2$ is defined by $B\cup N$.

\begin{remark}\label{genus}
 For $i=1,2$,  the genus $g(\partial W_{i})$ of  $\partial W_{i}$ is less than $g(\partial N)$. In addition, $\partial W_{i}$ is a union of $\partial W_{i}\cap \partial N$ and some disjoint discs. This shows that the map $\pi_{1}(\partial W_{i}\cap \partial N)\rightarrow \pi_{1}(\partial W_{i})$  is surjective. 
 \end{remark}
 
 \begin{lemma}\label{contractible} If $N'$ is homotopically trivial in $N$, then $N'$ is also homotopically trivial in $W_{i}$ for each $i$, where $W_{i}$ is obtained from the above surgeries.  \end{lemma}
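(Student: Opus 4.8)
The plan is to handle the two surgery types separately, since the geometric situation is quite different. In both cases the key point is that $N'$ being homotopically trivial in $N$ means the inclusion-induced map $\pi_1(N')\to\pi_1(N)$ is trivial; equivalently, any loop in $N'$ bounds a singular disc in $N$. I want to promote such a disc in $N$ to a disc in $W_i$.

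For \textbf{Type II}, this is essentially immediate: $W_2 = N\cup\overline{N_{\epsilon_1}(D_1)}\supset N$, so $N'\subset N\subset W_2$ and any nullhomotopy of a loop in $N'$ that lives in $N$ also lives in the larger set $W_2$. So here there is nothing to prove beyond observing the inclusion.

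For \textbf{Type I}, the difficulty is that $W_1\subset N$ is obtained by \emph{removing} an open tubular neighborhood $N_\epsilon(D)$ of a meridian disc $D\subset N\setminus N'$, so a nullhomotopy in $N$ of a loop $\gamma\subset N'$ need not avoid $N_\epsilon(D)$. The strategy is to take a generic map $f:(D^2,\partial D^2)\to (N,\gamma)$ and make it transverse to the core of the $1$-handle being removed (or to $D$ itself), so that $f^{-1}(D)$ is a finite collection of disjoint circles in $\operatorname{Int}D^2$; then I perform a surgery/innermost-circle argument on $f$ to push it off $D$. The crucial geometric input is that $D$ is a \emph{meridian} disc of $N$: cutting $N$ along $D$ (equivalently, removing $N_\epsilon(D)$) yields $W_1$ together with — in the splitting case — another component which we discard, and $N'$ lies in the component $W_1$. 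Since $D$ separates a regular neighborhood of itself into two sides both of which, locally, lie on the $W_1$ side after we note $N'\cap N_\epsilon(D)=\emptyset$, each innermost disc cut out by a circle of $f^{-1}(D)$ can be replaced by a parallel copy of a subdisc of $D$ pushed slightly to the $W_1$-side; iterating removes all intersections and produces a nullhomotopy of $\gamma$ inside $W_1=N\setminus N_\epsilon(D)$. The splitting vs.\ non-splitting distinction only affects which component we call $W_1$, not the pushing-off argument. Alternatively, and perhaps more cleanly, one can argue homotopy-theoretically: $N\simeq W_1$ in the non-splitting case up to collapsing a $1$-handle, and in the splitting case $N$ is a boundary connected sum $W_1\natural W'$ so $\pi_1(W_1)$ is a free factor (in fact a retract) of $\pi_1(N)$; the retraction $\pi_1(N)\to\pi_1(W_1)$ kills the image of $\pi_1(N')$ since that image is already trivial in $\pi_1(N)$, and one checks the retraction is realized by a map $N\to W_1$ restricting to the identity on $W_1\supset N'$.

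The main obstacle I anticipate is the Type I splitting case bookkeeping: one must be careful that $N'$ genuinely lies in the chosen component $W_1$ (which is part of the \emph{definition} of $W_1$ in Case (1), so this is given) and that the nullhomotopy can be trapped on that side. The innermost-disc surgery on $f$ is standard (it is the same mechanism used in Lemma \ref{one} and Corollary \ref{inter}), so I would present it briefly, emphasizing only the point that $D\cap N'=\emptyset$ guarantees the replacements never re-enter the discarded region, hence the resulting disc lies in $W_1$, i.e.\ $N'$ is homotopically trivial in $W_1$. This completes both cases.
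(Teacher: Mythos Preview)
Your approach 1 is essentially the paper's: take a loop $c\subset N'$, an immersed disc $D'\subset\text{Int}\,N$ bounding it, and perform disc surgery to push $D'$ off the removed handle. One difference of execution worth noting: the paper intersects $D'$ with the two parallel discs $D^+\amalg D^-=\partial N_\epsilon(D)\cap\text{Int}\,N$ rather than with $D$ itself, and replaces the \emph{maximal} (outermost) subdiscs of $D'$ by caps lying in $D^\pm$. This sidesteps the issue you flag in the splitting case. In fact your sentence that ``both sides, locally, lie on the $W_1$ side'' is false there---one side of $D$ is $W_1'$---so always pushing to the $W_1$-side does not remove the intersection; the standard fix is to push each innermost cap to whichever side the adjacent annulus is on, and only afterwards invoke connectivity (the surgered disc is connected with boundary $c\subset N'\subset W_1$, hence lies in the $W_1$-component of $N\setminus D$, which deformation retracts onto $W_1$). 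The paper's version bypasses this bookkeeping: the outer region $D'\setminus\bigcup_{\max}D'_i$ meets no $D^\pm$ and contains $c\subset N'$, so it already sits in $N\setminus N_\epsilon(D)$, while the caps sit in $D^\pm\subset\overline{N\setminus N_\epsilon(D)}$.

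Your approach 2 (retraction $N\to W_1$) is a valid alternative the paper does not use. Since handlebodies are aspherical, nullhomotopy of $N'\hookrightarrow W_1$ is equivalent to triviality on $\pi_1$; the factorization $\pi_1(N')\to\pi_1(N)\to\pi_1(W_1)$, where the first map is trivial by hypothesis and the second is a retraction (collapse the $1$-handle $N_\epsilon(D)$ and, in the splitting case, all of $W_1'$), gives this without any transversality argument.
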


\begin{proof} For the type II surgery,  $N$ is contained in $W_{2}$. Therefore,  $N'$ is homotopically trivial in $W_{2}$.

 For the type I surgery, it is sufficient to show  that any circle $c \subset N'$  bounds some (immersed) disc $\hat{D}'\subset W_{1}$.  
 
 The closed curve $c$ bounds an immersed disc $D'\subset \text{Int}~N$. We will construct the required disc $\hat{D}'\subset W_{1}$ from $D'$.

  We may assume that $D'$ intersects $D^{-}\amalg D^{+}:=\text{Int}~N\cap \partial N_{\epsilon}(D')$ transversally.  Each component $c_{i}$ of $D'\cap (D^{+}\amalg D^{-})$ is a circle in $D'$ and bounds a closed sub-disc $D'_{i}\subset D'$. 
  
  Since $D^{+}$ and $ D^{-}$ are two disjoint discs, each $c_{i}$ is contractible in $D^{+}\amalg D^{-}$. It also bounds a disc $D''_{i}\subset D^{+}\amalg D^{-}$. Let $C^{max}$ be the set of the maximal circles of  $\{c_{i}\}_{i\in I}$ in $D'$. We construct a disc $$\hat{D}':=D'\setminus \cup_{c_{i}\in C^{max}}D'_{i}\cup(\cup_{c_{i}\in C^{max}}D''_{i})$$ with boundary $c$. It stays in $\overline{N\setminus N_{\epsilon}(D')}$.  That is to say, $c$ is contractible in $W_{1}$. Therefore, $N'$ is homotopically trivial in $W_{1}$. 
  \end{proof}
  
\subsection{Existence of effective handlebodies}
In the following, let us consider a contractible $3$-manifold $M$. 
\begin{theorem}\label{modify}Let $N'$ and $N$ be two closed handlebodies in $M$ satisfying that 1) $N'\subset \text{Int}~N$ and 2) $N'$ is homotopically trivial in $N$.Then there exists a closed handlebody $R\subset M$ containing $N'$ satisfying that 

\begin{enumerate}
\item the map $\pi_{1}(\partial R)\rightarrow \pi_{1}(\overline{R\setminus N'})$ is injective;
\item the map $\pi_{1}(\partial R)\rightarrow \pi_{1}(\overline{M\setminus R})$ is injective;
\item $N'$ is homotopically trivial in $R$;
\item $\partial R$ is a  union of $\partial R\cap \partial N$ and some disjoint discs. 
\end{enumerate}
\end{theorem}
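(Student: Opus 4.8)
The plan is to produce the handlebody $R$ by applying the two surgeries of Section 4.1 repeatedly, using a genus-reduction argument to guarantee termination. First I would address condition (1). If the map $\pi_{1}(\partial N)\to\pi_{1}(\overline{N\setminus N'})$ already fails to be injective, then by Lemma \ref{injective} the handlebody $N$ is not effective relative to $N'$, so there is a meridian disc $D\subset N\setminus N'$ of $N$; I perform a Type I surgery along $D$ to obtain a handlebody $W_{1}\subset N$ still containing $N'$, with $g(\partial W_{1})<g(\partial N)$ by Remark \ref{genus}, and with $N'$ still homotopically trivial in $W_{1}$ by Lemma \ref{contractible}. Iterating, after at most $g(\partial N)$ steps the genus either drops to a value at which injectivity into $\overline{N\setminus N'}$ holds (in particular genus $0$ forces injectivity trivially, though that case is excluded once we track that $N'$ is not in a $3$-ball), so this process terminates in a handlebody $N^{*}\subset N$ containing $N'$, with $N'$ contractible in $N^{*}$, and with $\pi_{1}(\partial N^{*})\to\pi_{1}(\overline{N^{*}\setminus N'})$ injective. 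This secures conditions (1) and (3); condition (4) is maintained throughout since each surgery only removes tubular neighborhoods of discs, so $\partial$ of the new handlebody is a union of a subsurface of $\partial N$ with finitely many discs, and compositions of such surjections (Remark \ref{genus}) remain surjective.

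Next I would fix condition (2) without disturbing (1). If $\pi_{1}(\partial N^{*})\to\pi_{1}(\overline{M\setminus N^{*}})$ is not injective, the Loop lemma (Lemma \ref{loop}), in the form of Remark \ref{Int}, supplies an embedded disc $D_{1}\subset\overline{M\setminus N^{*}}$ with $\operatorname{Int}D_{1}\subset M\setminus N^{*}$ whose boundary is a non-contractible circle in $\partial N^{*}$; I perform a Type II surgery to enlarge $N^{*}$ to $W_{2}=N^{*}\cup\overline{N_{\epsilon_{1}}(D_{1})}$, which again has strictly smaller boundary genus by Remark \ref{genus}, contains $N^{*}\supset N'$, keeps $N'$ contractible by Lemma \ref{contractible}, and keeps condition (4). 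Iterating Type II surgeries terminates after at most $g(\partial N^{*})$ steps in a handlebody $R$ with $\pi_{1}(\partial R)\to\pi_{1}(\overline{M\setminus R})$ injective.

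The main obstacle — and the step requiring the most care — is that the Type II surgeries needed for condition (2) might destroy the injectivity in condition (1) that we just arranged, since enlarging the handlebody changes $\overline{R\setminus N'}$. To handle this I would argue that a Type II surgery cannot break condition (1): enlarging $N^{*}$ to $W_{2}$ along a disc $D_{1}\subset\overline{M\setminus N^{*}}$ gives $\overline{W_{2}\setminus N'}=\overline{N^{*}\setminus N'}\cup\overline{N_{\epsilon_{1}}(D_{1})}$, glued along a disc (or pair of discs) in $\partial N^{*}$; since we are attaching a $1$-handle-type piece along discs, Van Kampen shows $\pi_{1}(\overline{N^{*}\setminus N'})$ injects into $\pi_{1}(\overline{W_{2}\setminus N'})$, so a meridian of $W_{2}$ that bounds a disc missing $N'$ would already do so in $\overline{N^{*}\setminus N'}$, contradicting condition (1) for $N^{*}$ unless that meridian is trivial in $\partial N^{*}$ — and here one uses that $\partial W_{2}\cap\partial N^{*}\to\pi_{1}(\partial W_{2})$ is surjective (Remark \ref{genus}) together with the fact that the attaching discs are unknotted to reduce an arbitrary meridian of $W_{2}$ to one lying in $\partial N^{*}$. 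Thus conditions (1)–(4) are simultaneously preserved, and running the Type I phase to completion followed by the Type II phase to completion (each terminating by the genus bound) yields the desired $R$. I would close by noting that throughout, $R\subset M$ remains a closed handlebody containing $N'$, which is all that is claimed.
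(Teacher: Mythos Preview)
Your two-phase strategy (first all Type~I surgeries to secure condition~(1), then all Type~II surgeries to secure condition~(2)) differs from the paper's approach and introduces an obstacle that the paper simply does not face. The paper interleaves the two surgeries: at each step it checks whether \emph{either} $\pi_{1}(\partial N)\to\pi_{1}(\overline{N\setminus N'})$ or $\pi_{1}(\partial N)\to\pi_{1}(\overline{M\setminus N})$ fails to inject, performs whichever surgery is needed, and observes that genus strictly decreases in either case (Remark~\ref{genus}). Since the genus can drop at most $g(\partial N)$ times, the process halts, and by definition of halting both (1) and (2) hold at the end. There is no need to verify that one type of surgery preserves the other condition.

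Your argument that Type~II preserves condition~(1) contains a genuine error. A Type~II surgery attaches the thickened disc $\overline{N_{\epsilon_{1}}(D_{1})}$ to $N^{*}$ along a tubular neighbourhood of the circle $\partial D_{1}$ in $\partial N^{*}$, i.e.\ along an \emph{annulus}; this is a $2$-handle attachment, not a $1$-handle attached along a pair of discs as you write. Consequently Van~Kampen gives
\[
\pi_{1}\bigl(\overline{W_{2}\setminus N'}\bigr)\;\cong\;\pi_{1}\bigl(\overline{N^{*}\setminus N'}\bigr)\big/\langle\!\langle[\partial D_{1}]\rangle\!\rangle,
\]
a \emph{quotient} of $\pi_{1}(\overline{N^{*}\setminus N'})$, so your claimed injection of $\pi_{1}(\overline{N^{*}\setminus N'})$ into $\pi_{1}(\overline{W_{2}\setminus N'})$ is false in general, and the rest of that paragraph does not go through. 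The cleanest fix is exactly the paper's: drop the two-phase structure and interleave, letting the monotone genus bound do all the work.
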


\begin{rem}
From Lemma \ref{injective}, $R$ is an effective handlebody relative to $N'$.
\end{rem} 

\begin{proof}  Suppose that either the map $i_{1}: \pi_{1}(\partial N)\rightarrow \pi_{1}(\overline{N\setminus N'})$ is not injective or the map $i_{2}:\pi_{1}(\partial N)\rightarrow \pi_{1}(\overline{M\setminus N})$ is not injective. (If these two maps are both injective, $R$ is defined as  $N$.) 

If $i_{1}$ is not injective, Lemma \ref{loop} shows that there exists a meridian disc $D_{1}$ of $N$ with $D_{1}\cap N'=\emptyset$. We do the type I surgery on $N$ with the disc $D_{1}$ to obtain a new handlebody $W$. 

If $i_{2}$ is not injective, we use Lemma \ref{loop} to find an embedded circle $\gamma\subset \partial N$ and an embedded disc $D_{2}\subset \overline{M\setminus N}$ ($\text{Int} ~D_{2}\subset M\setminus N$) where $\gamma=\partial D_{2}$ is not null-homotopic in $\partial N$. We do the type II surgery with the disc $D_{2}$ to get a new handlebody $W$. 

In any case, we have that $g(\partial W)<g(\partial N)$. The boundary $\partial W$ is a union of $\partial W\cap \partial N$ and some disjoint discs $\{D'_{i}\}_{i}$. Therefore, $\pi_{1}(\partial W\cap \partial N)\rightarrow \pi_{1}(\partial W)$ is surjective. In addition,  Lemma \ref{contractible} implies that $N'$ is contractible in $W$.

When picking a  circle $\gamma'\subset \partial W$ which is not null-homotopic in $\partial W$, we may assume that $\gamma'$ is an embedded circle in $\partial W\cap \partial N$. Therefore, when repeating these two types of surgeries, we may assume that the new surgeries are operated away from these disjoint discs $\{D'_{i}\}$.  

Iterate this process until we find a handlebody $R$ satisfying (1) and (2).  At each step, the genus of the handlebody obtained from the surgery is less than the original one. Therefore, this process stops in no more than $g(N)$ steps.

As above, $N'$ is homotopically trivial in $R$ and $\partial R$ is a union of $\partial R\cap \partial N$ and some disjoint discs. 
\end{proof}

\begin{rem} If $N'$ is not contained in a $3$-ball in $M$, then the genus of $R$ is greater than zero. 
\end{rem}

\begin{lemma}\label{upper} Let $R\subset M$ be a closed effective handlebody relative to the closed handlebody $N'\subset \text{Int}~R$ satisfying $\pi_{1}(\partial R)\rightarrow \pi_{1}(\overline{M\setminus R})$ is injective. If a closed handlebody $N$ is an effective handlebody relative to $R\subset \text{Int}~N$, then $N$ is an effective handlebody relative to $N'$.  
\end{lemma}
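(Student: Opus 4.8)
\textbf{Proof proposal for Lemma \ref{upper}.}

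The plan is to argue by contradiction at the level of meridian discs, using Lemma \ref{one} as the transfer mechanism between the three handlebodies. Suppose $N$ is \emph{not} an effective handlebody relative to $N'$. Then by Definition \ref{eff} there is a meridian $\gamma\subset\partial N$ of $N$ together with a meridian disc $(D,\partial D)\subset(N,\partial N)$ with $\partial D=\gamma$ and $D\cap(\text{core of }N')=\emptyset$; by a small isotopy we may take $\text{Int}~D\subset\text{Int}~N$ and arrange $D$ to meet $\partial R$ transversally. The idea is to show that this disc $D$ already fails to meet the core of $R$, contradicting the hypothesis that $N$ is effective relative to $R$.

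First I would apply Lemma \ref{one} with the pair $(N',N)$: the hypotheses of that lemma are exactly what we are given, namely $N'\subset\text{Int}~N$, the map $\pi_1(\partial N')\to\pi_1(\overline{N\setminus N'})$ is injective (this follows from $R$ being effective relative to $N'$ and $N$ being effective relative to $R$, via Lemma \ref{injective} applied twice together with the inclusion-induced factorization $\pi_1(\partial N')\to\pi_1(\overline{R\setminus N'})\to\pi_1(\overline{N\setminus N'})$, or more directly from $\pi_1(\partial N')\to\pi_1(\overline{M\setminus N'})$ being injective), and $N$ is an effective handlebody relative to $N'$ --- wait, this last is precisely what we are trying to prove. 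So instead the cleaner route is the following: I would run the \emph{proof} of Lemma \ref{one}, not its statement, on the disc $D$ with respect to $\partial R$. That proof takes a meridian disc of the outer handlebody, looks at its intersection circles with the boundary of the middle handlebody, and extracts from $C^{non}$ a minimal non-contractible circle which is then shown to be a meridian of the middle handlebody. Here: since $\gamma=\partial D$ is a meridian of $N$ and $N$ is effective relative to $R$, $\gamma$ is not nullhomotopic in $\overline{N\setminus R}$, and hence (same surgery-on-$\hat D$ argument as in Lemma \ref{one}, using that $\pi_1(\partial R)\to\pi_1(\overline{M\setminus R})$, a fortiori $\pi_1(\partial R)\to\pi_1(\overline{N\setminus R})$, is injective) the set $C^{non}$ of intersection circles $D\cap\partial R$ that are non-contractible in $\partial R$ is nonempty, and a minimal such circle $c_j$ bounds a disc $\hat D_j$ which lies inside $R$ --- i.e. $c_j$ is a meridian of $R$ with a meridian disc $\hat D_j\subset R$ contained in $D_j\subset D$.

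Now comes the second half: I want to push $c_j$ all the way down to a meridian of $N'$, or rather to derive the contradiction directly. Since $\hat D_j\subset R$ is a meridian disc of $R$ and $R$ is an effective handlebody relative to $N'$, by definition $\hat D_j$ must intersect the core of $N'$. But $\hat D_j$ was built by cutting out subdiscs of $D$ and capping off along $\partial R\subset\partial R$; the part coming from $D$ is contained in $D$, which by assumption misses the core of $N'$, and the capping discs lie in $\partial R$, which is disjoint from $\text{Int}~N'\supset(\text{core of }N')$ provided $R$ and $N'$ are chosen so that $N'\subset\text{Int}~R$ (given). Hence $\hat D_j\cap(\text{core of }N')=\emptyset$, a contradiction. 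Therefore $N$ is effective relative to $N'$.

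The main obstacle I anticipate is bookkeeping about \emph{which} region of $\partial R$ (inside vs.\ outside $N'$) the capping discs live in, and making sure the immersed disc $\hat D_j$ genuinely lands in $\text{Int}~R$ rather than in $\overline{M\setminus R}$ --- this is the step where the hypothesis ``$\pi_1(\partial R)\to\pi_1(\overline{M\setminus R})$ injective'' is used, exactly as in the Case~I/Case~II dichotomy near the end of the proof of Lemma \ref{one}. A slightly cleaner packaging, which I would actually prefer to write, is to invoke Lemma \ref{one} as a black box twice in succession: first to the pair $(R,N)$ to get that any meridian disc $D$ of $N$ contains a meridian disc of $R$ inside it (here $\pi_1(\partial R)\to\pi_1(\overline{N\setminus R})$ is injective because $\pi_1(\partial R)\to\pi_1(\overline{M\setminus R})$ is, and $N$ is effective relative to $R$ by hypothesis), and then to the pair $(N',R)$ applied to that sub-meridian-disc to get a meridian disc of $N'$ inside it --- hence inside the original $D$. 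Since a meridian disc of $N'$ necessarily meets the core of $N'$, every meridian disc of $N$ meets the core of $N'$, which is the definition of $N$ being an effective handlebody relative to $N'$. The only check needed for this packaging is that the hypothesis ``$\pi_1(\partial N')\to\pi_1(\overline{R\setminus N'})$ injective'' required by Lemma \ref{one} on the pair $(N',R)$ holds, and that is precisely the content of $R$ being effective relative to $N'$ via Lemma \ref{injective}.
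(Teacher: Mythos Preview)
Your first approach --- running the \emph{proof} of Lemma \ref{one} on the disc $D$ relative to $\partial R$ and tracking that the resulting immersed disc $\hat D_j\subset R$ avoids $N'$ --- is essentially correct and gives a genuinely different argument from the paper's. One point needs tightening: $\hat D_j$ is only \emph{immersed}, so it is not a meridian disc in the sense of Definition \ref{mer}, and Definition \ref{eff} cannot be invoked against it directly. The clean way to close is via Lemma \ref{injective}: since $\hat D_j\subset R$ avoids $N'$ (the capping pieces lie in $\partial R$, the rest in $D$; if you only assumed $D$ misses the core of $N'$, use the deformation retraction $R\setminus(\text{core of }N')\simeq\overline{R\setminus N'}$), the meridian $c_j$ of $R$ is nullhomotopic in $\overline{R\setminus N'}$, contradicting injectivity of $\pi_1(\partial R)\to\pi_1(\overline{R\setminus N'})$.

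Your ``cleaner packaging'' with two black-box calls to Lemma \ref{one} does not work as written. Lemma \ref{one} outputs a meridian \emph{curve} $c_j\subset D\cap\partial R$, not a meridian \emph{disc} of $R$ inside $D$: the subdisc $D_j\subset D$ bounded by $c_j$ need not lie in $R$, so it is not a meridian disc of $R$ and cannot be fed into Lemma \ref{one} for the pair $(N',R)$. You really do need to open the proof (as you did in the first approach) to get $\hat D_j$ and argue about where it lives.

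For comparison, the paper's proof is purely algebraic and avoids reopening Lemma \ref{one}. It shows $\pi_1(\partial N)\to\pi_1(\overline{N\setminus N'})$ is injective by factoring through $\pi_1(\overline{N\setminus R})$: the first map is injective by Lemma \ref{injective}, and the second because Van Kampen gives $\pi_1(\overline{N\setminus N'})\cong\pi_1(\overline{N\setminus R})\ast_{\pi_1(\partial R)}\pi_1(\overline{R\setminus N'})$ with both edge maps from $\pi_1(\partial R)$ injective, so each factor injects into the amalgamated product. Your geometric route is more hands-on and closer in spirit to the surgeries of Section 3; the paper's route is shorter once one is willing to quote the normal-form theorem for amalgamated products.
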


\begin{proof} From Lemma \ref{injective}, it is sufficient to show that the map $\pi_{1}(\partial N)\rightarrow \pi_{1}(\overline{N\setminus N'})$ is injective. 

We use Lemma \ref{injective} to show that the induced map $\pi_{1}(\partial R)\rightarrow \pi_{1}(\overline{R\setminus N'})$  is injective. 
Since $\pi_{1}(\partial R)\rightarrow \pi_{1}(\overline{M\setminus R})$ is injective, then the map $\pi_{1}(\partial R)\rightarrow \pi_{1}(\overline{N\setminus R})$ is also injective. 

Van Kampen's theorem gives an isomorphism between $\pi_{1}(\overline{N\setminus N'})$ and $\pi_{1}(\overline{N\setminus R})\ast_{\pi_{1}(\partial R)}\pi_{1}(\overline{R\setminus N'})$. A classical result (See [Theorem 11.67, Page 404] in \cite{Rot}) shows that the induced map $\pi_{1}(\overline{N\setminus R})\rightarrow \pi_{1}(\overline{N\setminus N'})$ is injective. 

 Lemma \ref{injective} shows that the map $\pi_{1}(\partial N)\rightarrow \pi_{1}(\overline{N\setminus R})$ is injective. Therefore, the composition $\pi_{1}(\partial N)\rightarrow \pi_{1}(\overline{N\setminus R})\rightarrow \pi_{1}(\overline{N\setminus N'})$ is also injective. This finishes the proof. 
\end{proof}

 \subsection{Property $(H)$} In the following, let us consider a contractible $3$-manifold $M$ which is not homeomorphic to $\mathbb{R}^{3}$. 
 
 By Theorem \ref{handle}, $M$ can be written as an ascending union of handlebodies $\{N_{k}\}^{\infty}_{k=0}$. 
 Each $N_{k}$ is homotopically trivial in $N_{k+1}$. 
 As in Section 2, we can assume that $N_{0}$ is not contained in a $3$-ball in $M$ (because $M$ is not homeomorphic to $\mathbb{R}^{3}$). (See Remark \ref{ball})
 
In the genus one case, the family $\{N_{k}\}$ has several good properties. 
For example, each $N_{k}$ is an effective handlebody relative to $N_{0}$ and the map $\pi_{1}(\partial N_{k})\rightarrow \pi_{1}(\overline{M\setminus N_{k}})$ is injective (See Lemma 2.10 of \cite{W1}). 
These properties are necessary and crucial in our proof.  In general, the family $\{N_{k}\}$ may not have these properties. 
To overcome this difficulty, we introduce a topological property, called \emph{Property (H)}.  

\begin{definition} \label{P H}A family $\{R_{k}\}_{k}$ of handlebodies in a contractible $3$-manifold $M:=\cup_{k}N_{k}$ is said to have \emph{Property $(H)$} if  
\begin{enumerate}
\item the map $\pi_{1}(\partial R_{k})\rightarrow \pi_{1}(\overline{R_{k}\setminus R_{0}})$ is injective for $k>0$;
\item the map $\pi_{1}(\partial R_{k})\rightarrow \pi_{1}(\overline{M\setminus R_{k}})$ is injective for $k\geq 0$;
\item each $R_{k}$ is contractible in $R_{k+1}$ but not contained in a $3$-ball in $M$ ;
\item there exists a sequence of increasing integers $\{j_{k}\}_{k}$, such that $\pi_{1}(\partial R_{k}\cap \partial N_{j_{k}})\rightarrow \pi_{1}(\partial R_{k})$ is surjective.
\end{enumerate}
where $\{N_{k}\}$ is  a fixed family of handlebodies assumed as in Section 2.
 \end{definition}

For example, in a contractible genus one $3$-manifold $M=\cup_k N_{k}$, the family $\{N_{k}\}$ satisfies \emph{Property (H)} (see Lemma 2.11  of \cite{W1}).

 In the following, we will prove that if a contractible $3$-manifold $M$ is not homeomorphic to $\mathbb{R}^3$, there is a family of handlebodies with Property (H). However, such a family is not unique.

 \begin{theorem}\label{H} If a contractible $3$-manifold $M:=\cup_{k}N_{k}$ (as above) is not homeomorphic to $\mathbb{R}^{3}$, then there is a family $\{R_k\}_{k}$ of handlebodies with  Property (H). Namely,  the family  $\{R_k\}_k$ satisfies the following: 
 \begin{enumerate}
\item the map $\pi_{1}(\partial R_{k})\rightarrow \pi_{1}(\overline{R_{k}\setminus R_{0}})$ is injective for $k>0$;
\item the map $\pi_{1}(\partial R_{k})\rightarrow \pi_{1}(\overline{M\setminus R_{k}})$ is injective for $k\geq 0$;
\item each $R_{k}$ is contractible in $R_{k+1}$ but not contained in a $3$-ball in $M$;
\item there exists a sequence of increasing integers $\{j_{k}\}_{k}$, such that $\pi_{1}(\partial R_{k}\cap \partial N_{j_{k}})\rightarrow \pi_{1}(\partial R_{k})$ is surjective.
\end{enumerate}
\end{theorem}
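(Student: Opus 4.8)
The plan is to construct the family $\{R_k\}$ inductively, using Theorem \ref{modify} as the main engine and Lemma \ref{upper} to propagate the effectiveness condition through the tower.

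\textbf{Base step.} Since $M$ is not homeomorphic to $\mathbb{R}^3$, by Remark \ref{ball} and Remark \ref{stru} we may choose the tower $\{N_k\}$ so that each $N_k$ is contractible in $N_{k+1}$ and none is contained in a $3$-ball in $M$. First I would apply Theorem \ref{modify} to the pair $N_0 \subset \text{Int}~N_{j}$ for some $j$ large enough that $N_0$ is contractible in $N_j$ (take $j=1$, say), producing a handlebody $R_0$ with $\pi_1(\partial R_0)\to\pi_1(\overline{M\setminus R_0})$ injective, with $N_0$ contractible in $R_0$, and with $\partial R_0$ a union of $\partial R_0\cap\partial N_1$ and disjoint discs; since $N_0$ is not in a $3$-ball, neither is $R_0$, and its genus is positive. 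This gives condition (2) for $k=0$, part of (3), and the surjectivity datum feeding (4) with $j_0 = 1$. Note $R_0$ is an effective handlebody relative to $N_0$ by Lemma \ref{injective}.

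\textbf{Inductive step.} Suppose $R_0,\dots,R_k$ have been constructed, with $R_k$ an effective handlebody relative to $R_0$, with $\pi_1(\partial R_k)\to\pi_1(\overline{M\setminus R_k})$ injective, with $R_{k-1}$ contractible in $R_k$, with $R_k$ not in a $3$-ball, and with $R_k$ contained in $\text{Int}~N_{j_k}$ for some $j_k$ realizing the surjectivity in (4). Choose $j_{k+1}>j_k$ large enough that $N_{j_k}$ — hence $R_k$ — is contractible in $N_{j_{k+1}}$. Apply Theorem \ref{modify} to the pair $R_k \subset \text{Int}~N_{j_{k+1}}$: this yields a handlebody $R_{k+1}$ with $R_k\subset\text{Int}~R_{k+1}$, with $\pi_1(\partial R_{k+1})\to\pi_1(\overline{R_{k+1}\setminus R_k})$ injective (so $R_{k+1}$ is effective relative to $R_k$), with $\pi_1(\partial R_{k+1})\to\pi_1(\overline{M\setminus R_{k+1}})$ injective (condition (2)), with $R_k$ contractible in $R_{k+1}$ (condition (3), and also $R_{k+1}$ is not in a $3$-ball since $R_k$ is not), and with $\partial R_{k+1}$ a union of $\partial R_{k+1}\cap\partial N_{j_{k+1}}$ and disjoint discs, whence $\pi_1(\partial R_{k+1}\cap\partial N_{j_{k+1}})\to\pi_1(\partial R_{k+1})$ is surjective (condition (4) with the chosen $j_{k+1}$; one may enlarge $j_{k+1}$ if monotonicity of the sequence needs enforcing). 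To get condition (1), namely injectivity of $\pi_1(\partial R_{k+1})\to\pi_1(\overline{R_{k+1}\setminus R_0})$ for $k+1>0$, I would invoke Lemma \ref{upper} with the triple $N' = R_0$, $R = R_k$, $N = R_{k+1}$: by the inductive hypothesis $R_k$ is effective relative to $R_0$ and $\pi_1(\partial R_k)\to\pi_1(\overline{M\setminus R_k})$ is injective, and $R_{k+1}$ is effective relative to $R_k$ by construction, so Lemma \ref{upper} gives that $R_{k+1}$ is effective relative to $R_0$, i.e. $\pi_1(\partial R_{k+1})\to\pi_1(\overline{R_{k+1}\setminus R_0})$ is injective by Lemma \ref{injective}. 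This also restores the inductive hypothesis for the next step, since it shows $R_{k+1}$ is effective relative to $R_0$.

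\textbf{Conclusion and main obstacle.} Iterating produces the ascending family $\{R_k\}_k$; since each $R_k\supset N_0$ and $N_0$ is not in a $3$-ball, all are non-trivial, and by arranging $j_k\to\infty$ we exhaust $M$ at the level of the indexing (though $\cup_k R_k$ need not be all of $M$, as the Remark already flags — this is harmless). I expect the main delicate point to be the bookkeeping that keeps \emph{all} the structural data compatible simultaneously at each stage: one needs $R_k$ to sit inside some $N_{j_k}$ with $\partial R_k$ meeting $\partial N_{j_k}$ in a subsurface carrying $\pi_1(\partial R_k)$ (this is exactly conclusion (4) of Theorem \ref{modify}, so it is preserved), and one needs the effectiveness-relative-to-$R_0$ property carried along so that Lemma \ref{upper} applies at every step — the transitivity built into Lemma \ref{upper} is precisely what makes the induction close, so the real content is recognizing that Theorem \ref{modify} plus Lemma \ref{upper} are exactly the two ingredients and that their hypotheses chain together without loss. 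The genus-drop estimate in Theorem \ref{modify} guarantees each individual application terminates, so no termination issue arises within a step.
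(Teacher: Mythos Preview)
Your proposal is correct and follows essentially the same inductive strategy as the paper: build $R_0$ first, then at each stage apply Theorem \ref{modify} to the pair $R_k\subset N_{j_{k+1}}$ and use Lemma \ref{upper} to propagate effectiveness relative to $R_0$. The only cosmetic differences are that the paper constructs $R_0$ via Type~II surgeries alone (rather than the full Theorem \ref{modify}) and treats the step $k=0\to k=1$ separately, since Lemma \ref{upper} does not literally apply when $R=R_0=N'$---but there Theorem \ref{modify} already yields $\pi_1(\partial R_1)\to\pi_1(\overline{R_1\setminus R_0})$ injective directly, so your uniform phrasing needs only this trivial adjustment.
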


\begin{remark} \label{inj}~~~~~~~~~~~~~~~~~~~~~~~~~~~~~~
\begin{itemize}
\item The union $\cup_{k}R_{k}$ may be  not equal to $M$.
\item  For $k>0$,  Van-Kampen's Theorem gives an isomorphism between $\pi_{1}(\overline{M\setminus R_{0}})$ and $\pi_{1}(\overline{M\setminus R_{k}})\ast_{\pi_{1}(\partial R_{k})}\pi_{1}(\overline{R_{k}\setminus R_{0}})$. By (1) and (2), we use [Theorem 11.67, Page 404] in \cite{Rot} to show that the map $\pi_{1}(\partial R_{k})\rightarrow \pi_{1}(\overline{M\setminus R_{0}})$ is injective. 
\item As (4) in Theorem \ref{modify}, $\partial R_{k}$ is the union of $\partial R_{k}\cap \partial N_{j_{k}}$ and disjoint discs. 
\end{itemize}
\end{remark}

\begin{proof} First, we construct $R_{0}$. We repeatedly apply the Type II surgery to $N_{0}$, until we  find a handlebody $R_{0}$ containing $N_{0}$ so that $\pi_{1}(\partial R_{0})\rightarrow \pi_{1}(\overline{M\setminus R_{0}})$ is injective. 

From Remark \ref{genus}, we have that, at each step, the genus of the handlebody obtained from the surgery is less than the original one. Therefore, this process stops in no more than $g(N_{0})$ steps.

In addition, since $N_{0}$ is not contained in a $3$-ball in $M$,  then $R_{0}$ has the same property. 

\vspace{2mm}

In the following, we  construct the sequence $\{R_{k}\}_{k}$ inductively.

When $k$ is equal to $1$, we pick a handlebody $N_{j_{1}}$ containing $R_{0}$ satisfying that $R_{0}$ is homotopically trivial in $N_{j_{1}}$. Its existence is ensured by the following fact: 

Because $R_{0}$ is compact, there is some handlebody $N_{j_{1}-1}$ containing $R_{0}$. Since $N_{j_{1}-1}$ is homotopically trivial in $N_{j_{1}}$, $R_{0}$ is contained in $N_{j_{1}}$ and contractible in $N_{j_{1}}$.

\vspace{2mm}

By Theorem \ref{modify}, there exists a handlebody $R_{1}$ containing $R_{0}$ so that
\begin{itemize}
\item $\pi_{1}(\partial R_{1})\rightarrow \pi_{1}(\overline{R_{1}\setminus R_{0}})$ is injective;
\item $\pi_{1}(\partial R_{1})\rightarrow \pi_{1}(\overline{M\setminus R_{1}})$ is injective;
\item $R_{0}$ is contractible in $R_{1}$;
\item $\partial R_{1}$ is a union of $\partial R_{1}\cap \partial N_{j_{1}}$ and some disjoint closed discs. Therefore, $\pi_{1}(\partial R_{1}\cap \partial N_{j_{1}})\rightarrow \pi_{1}(\partial R_{1})$ is surjective. 
\end{itemize}
In particular, since $R_{0}$ is not contained in a $3$-ball in $M$, $R_{1}$ has the same property.

 Suppose that there exists a handlebody $R_{k-1}$ and a positive integer $j_{k-1}$ satisfying (1), (2), (3) and (4) in Theorem \ref{H}.

As the existence of $N_{j_{1}}$, there exists a handlebody $N_{j_{k}}$ containing $R_{k-1}$ satisfying that $R_{k-1}$ is homotopically trivial in $N_{j_{k}}$. We use Theorem \ref{modify} to find an effective handlebody $R_{k}$ relative to $R_{k-1}$ satisfying (2), (3) and (4).

Since the map $\pi_{1}(\partial R_{k-1})\rightarrow \pi_{1}(\overline{R_{k-1}\setminus R_{0}})$ is injective, $R_{k-1}$ is an effective handlebody relative to $R_{0}$ (from Lemma \ref{injective}).  Lemma \ref{upper} shows that $R_{k}$ is an effective handlebody relative to $R_{0}$. We apply Lemma \ref{injective} again and get that $R_{k}$ also satisfies (1). This finishes the proof. 
\end{proof}

\section{Minimal Surfaces and Laminations}

In Sections 5 and 6, we will talk about minimal surfaces in a contractible $3$-manifolds. In the following two sections, we will use the notions: 
\begin{itemize}[leftmargin=5pt]
\item Let $(M, g)$ be a complete $3$-manifold which is not homeomorphic to $\mathbb{R}^3$; 
\item The manifold $M$ is an increasing union of handlebodies $\{N_k\}_{k=0}^\infty$. 
\item  We may assume that $N_k$ is homotopically trivial in $N_{k+1}$ for each $k$ and $N_0$ is no contained in a $3$-ball in $M$ (see Remark \ref{ball}). 
\end{itemize}

In addition, for each $k$, the genus of $N_{k}$ is greater than zero. (If not, there is some handlebody $N_{k}$ of genus zero, namely a $3$-ball, which is in contradiction with the assumption.)

\subsection{Minimal Laminations} From Lemma \ref{sys}, each $N_{k}$ has a system of meridians $\{\gamma^{l}_{k}\}_{l=1}^{g(N_{k})}$, where $g(N_{k})$ is the genus of $N_{k}$. Our target  is to construct a lamination $\mathscr{L}_{k}:=\cup_{l}\Omega^{l}_{k}\subset N_{k}$ (i.e. a disjoint union of embedded surfaces) with $\partial \Omega^{l}_{k}=\gamma^{l}_{k}$ and ``good'' properties.

As in \cite{W1}, we use a result of Meeks and Yau (See [Theorem 6.26 Page 244] in \cite{CM1}) to construct them. However, it requires a geometric condition that  the boundary of $ N_k$ is mean convex. Then we construct a new metric $g_{k}$ over $N_{k}$ so that 

1) $g_{k}|_{N_{k-1}}$ is equal to $g|_{N_{k-1}}$; 

2) The boundary $\partial N_{k}$ is mean convex for $g_{k}$. 
 
\vspace{2mm}

As in Section 5.1 of \cite{W1}, the metric $g_{k}$ is constructed as follows:

\noindent\emph{Let $h(t)$ be a positive smooth function on $\mathbb{R}$ so that $h(t)=1$, for any $t\in \mathbb{R}\setminus [-\epsilon, \epsilon]$. Consider the function $f(x):=h(d(x, \partial N_{k}))$ and the metric $g_{k}:=f^{2}g|_{N_{k}}$. For the metric $ g_{k}$, the mean curvature $\hat{H}(x)$ of $\partial N_{k}$ is $$\hat{H}(x)=h^{-1}(0)(H(x)+2h'(0)h^{-1}(0))$$  
Choosing $\epsilon$ small enough and a function $h$ with $h(0)=2$ and $h'(0)>2\max_{x\in\partial N_{k}} |H(x)|+2$, one gets the required metric $g_{k}$.}

\vspace{2mm}
Let us describe the inductive construction of $\{\Omega^{l}_{k}\}_{l=1}^{g(N_{k})}$.  

When $l=1$, there is an embedded area-minimizing disc $\Omega^{1}_{k}\subset N_{k}$ with boundary $\gamma^1_{k}$ for the metric $g_k$ (see Theorem 6.28 of \cite{CM1}).
 
 Suppose that there are $l$ disjointly embedded stable minimal discs $\{\Omega^{i}_{k}\}_{i=1}^{l}$ with $\partial \Omega^{i}_{k}=\gamma^{i}_k$. 
 
\begin{figure}[H]
\begin{center}
\begin{tikzpicture}
\node[draw,ellipse,minimum height=150pt, minimum width = 300pt,thick] (S) at (0,0){}; 
\draw(-2, 0) arc(200:340: 2 and 1.5);
\draw (-1.8, -0.3) arc(160: 20: 1.8 and 1);

\draw[dashed][red](0,-1) arc(90:-90: 0.4 and 0.8);
\draw[red](0, -1) arc(90: 270: 0.4 and 0.8);

 \begin{scope}
    \clip (0.4, -1.8) arc(0:360: 0.4 and 0.8);
   \draw[pattern=north west lines, pattern color=red] (-4,-4) rectangle (10,10);    
     \end{scope}

\draw (-4.5,-4)--(4.5, -4);

\draw(-4.5, -5.6)--(4.5, -5.6);
\draw[dashed][red](-4.5,-4) arc(90:-90: 0.4 and 0.8);
\draw[red](-4.5, -4) arc(90: 270: 0.4 and 0.8);

 \begin{scope}
    \clip (-4.1, -4.8) arc(0:360: 0.4 and 0.8);
   \draw[pattern=north west lines, pattern color=red] (-8,-7) rectangle (10,10);    
     \end{scope} 
     
    \begin{scope}
    \clip (4.9, -4.8) arc(0:360: 0.4 and 0.8);
   \draw[pattern=north west lines, pattern color=red] (-25,-20) rectangle (100,100);    
     \end{scope}

\draw[red](4.5,-4) arc(90:-90: 0.4 and 0.8);
\draw[red](4.5, -4) arc(90: 270: 0.4 and 0.8);

\draw[->] (0, -3) -- (0, -3.5);

\node(Os) at (0,-2) {$\Omega^{1}_{k}$};

\node(Os) at (0,2) {$(N_{k}, g_{k})$};
\node(Os) at (-0.7, -2) {$\gamma^{1}_{k}$};
\node(Os) at (0, -5) {$({T}_{k,1}, g_{k}|_{T_{k, 1}})$};

\node(Os) at (-4.4,-4.8) {${\Omega^{1}_{k}}^{-}$};
\node(Os) at (-5.2, -5) {${\gamma^{1}_{k}}^{-}$};
\node(Os) at (4.6,-4.8) {${\Omega^{1}_{k}}^{+}$};
\node(Os) at (3.7, -5) {${\gamma^{1}_{k}}^{+}$};
\label{1}
\end{tikzpicture}

\caption{}

\end{center}
\end{figure}

 Let us consider the Riemannian manifold $(T_{k, l}, g_k |_{H_{k,l}})$, where $T_{k, l}:=N_{k}\setminus \amalg_{i=1}^l \Omega^l_{k}$. It is a handlebody of genus $g(N_{k})-l$. For example, see the above figure.  

The boundary of $(T_{k, l}, g_{k}|_{T_{k,l}})$ consists of two different parts. One is $\partial N_{k}\setminus \amalg_{i=1}^{l}\gamma^{i}_{l}$. The mean curvature is nonnegative on this part. The other consists of  $2l$ disjoint discs $\{{ \Omega_{k}^{i}}^{-}\}_{i=1}^{l}$ and $\{{\Omega_{k}^{i}}^{+}\}_{i=1}^l$.  The two discs ${ \Omega_{k}^{i}}^{-}$ and ${\Omega_{k}^{i}}^{+}$ are  two sides of  the same minimal disc $\Omega^i_{k}$. The mean curvature vanishes on these discs.

Therefore,  the mean curvature of the boundary of $(T_{k,l}, g_{k}|_{H_{k,l}})$ is non-negative. In addition, $\{\gamma^{i}_{k}\}_{i>l}$ is a system of meridians of the handlebody $(T_{k, l}, g_{k}|_{T_{k,l}})$ 
 
 Then, we use the result of Meeks and Yau (See Theorem 6.28 of \cite{CM1}) to find an embedded stable minimal surface $\Omega^{l+1}_{k}$ in the closure of $(T_{k, l}, g_{k} |_{T_{k,l}})$ with boundary $\gamma^{l+1}_{k}$. The disc $\Omega^{l+1}_{k}$ intersects the boundary of $(T_{k, l}, g_{k} |_{T_{k,l}})$ transversally. Hence, $\text{Int} ~\Omega^{l+1}_{k}$ is contained in $\text{Int}~ T_{k,l}$. That is to say, $\{\Omega^{i}_{k}\}_{i=1}^{l+1}$ are disjoint stable minimal surfaces for  $( N_{k}, g_{k})$.

This finishes the inductive construction.

To sum up, there exist $g(N_{k})$ disjointly embedded meridian discs $\{\Omega^{l}_{k}\}^{g_{k}}_{l=1}$. Define the lamination $\mathscr{L}_{k}$ by $\amalg_{l} \Omega^{l}_{k}$. It is a stable minimal lamination for  the new metric $g_{k}$ and for the original one away from $\partial N_{k}$ (near $N_{k-1}$, for example).

The set $\mathscr{L}_{k}\cap N_{k-1}$ is a stable minimal lamination in $(M , g)$. Each leaf has its boundary contained in $\partial N_{k-1}$.

We know that each lamination $\mathscr{L}_{k}$ intersects $N_{0}$. The reason is below:

If the set $\mathscr{L}_{k}\cap N_{0}$ is empty, we choose a tubular neighborhood $N(\mathscr{L}_{k})$ in $N_{k}$ with small radius so that the set $N(\mathscr{L}_{k})\cap N_{0}$ is also empty. That is to say, $N_{0}$ lies in the handlebody $N_{k}\setminus N(\mathscr{L}_k)$ of genus zero (i.e. a $3$-ball). This is in contradiction with our assumption that $N_{0}$ is not contained in a $3$-ball.

\subsection{Limits of laminations} First, we recall a classical convergence theorem for minimal surfaces.

\begin{definition}\label{converge} In a complete Riemannian $3$-manifold $(M, g)$, a sequence $\{\Sigma_{n}\}$ of immersed minimal surfaces \emph{converges smoothly with finite multiplicity} (at most $m$) to an immersed minimal surface $\Sigma$, if for each point $p$ of $\Sigma$, there is a disc neighborhood $D$ in $\Sigma$ of $p$, an integer $m$ and  a neighborhood $U$ of $D$ in $M$ (consisting of geodesics of $M$ orthogonal to $D$ and centered at the points of $D$) so that for $n$ large enough, each $\Sigma_{n}$ intersects $U$ in at most $m$ connected components. Each component is a graph over $D$ in the geodesic coordinates.  Moreover, each component converges to $D$ in $C^{2, \alpha}$-topology as $n$ goes to infinity. 

Note that in the case that each $\Sigma_{n}$ is embedded, the surface $\Sigma$ is also embedded. The \emph{multiplicity} at $p$ is equal to the number of connected component of $\Sigma_{n}\cap U$ for $n$ large enough. It remains constant on each component of $\Sigma$.  

\end{definition}

\begin{remark}\label{graph} Let us consider a family $\{\Sigma_{n}\}_{n}$ of properly embedded minimal surfaces  converging to the minimal surface $\Sigma$ with finite multiplicity. 
Fix a compact simply-connected subset $D\subset \Sigma$. Let $U$ be the tubular neighborhood of $D$ in $M$ with radius $\epsilon$ and  $\pi: U \rightarrow D$ be  the projection from $U $ onto $D$. It follows that the restriction $\pi|_{\Sigma_{n}\cap U}: \Sigma_{n}\cap U\rightarrow D$ is a $m$-sheeted covering map for $\epsilon$ small enough and $n$ large enough, where $m$ is the multiplicity. 

Therefore, the restriction of $\pi$ to each component of $\Sigma_{n}\cap U$ is also a covering map. Since $D$ is simply-connected, it is bijective. Therefore, each component of $\Sigma_{n}\cap U$ is a normal graph over $D$.
\end{remark}

\begin{theorem}\label{convergence}\textnormal{(see \cite{And}, Compactness Theorem on Page 96 and \cite{MRR} , Theorem 4.37 on Page 49)} Let $\{\Sigma_{k}\}_{k\in \mathbb{N}}$ be a family of properly embedded minimal surfaces  in a 3-manifold $M^{3}$ satisfying (1) each $\Sigma_{k}$ intersects a given compact set $K_{0}$; (2) for any compact set $K$ in $M$, there are three constants $C_{1}=C_{1}(K)>0$, $C_{2}=C_{2}(K)>0$ and $j_{0}=j_{0}(K)\in \mathbb{N}$ such that for each $k\geq j_{0}$, it holds that
\begin{enumerate}
\item $|A_{\Sigma_{k}}|^{2}\leq C_{1}$ on $K\cap\Sigma_{k}$, where $|A_{\Sigma_{k}}|^{2}$ is the square length of the second fundamental form of $\Sigma_{k}$
\item $\text{Area}(\Sigma_{k}\cap K)\leq C_{2}$.
\end{enumerate}
Then, after passing to a subsequence, $\Sigma_{k}$ converges to a properly embedded minimal surface with finite multiplicity in the $C^{\infty}$-topology. 
\end{theorem}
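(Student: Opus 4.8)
The statement to prove is the compactness theorem for properly embedded minimal surfaces: under uniform local curvature bounds and uniform local area bounds, a sequence $\{\Sigma_k\}$ passing through a fixed compact set $K_0$ subconverges smoothly with finite multiplicity to a properly embedded minimal surface. Since this is a citation-backed statement (Anderson, and Meeks--Ros--Rosenberg), the proof I would present is the standard one: first establish that the limit is a genuine, nonempty, properly embedded minimal surface near each point, then perform a diagonal/exhaustion argument to globalize, and finally verify properness of the limit. The main technical engine is elliptic regularity applied to the minimal surface equation together with a graphical-decomposition argument coming from the curvature bound.

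\textbf{Step 1: Local graphical decomposition.} Fix a point $p \in M$ and a small geodesic ball $B_r(p)$ inside a compact set $K$. The uniform bound $|A_{\Sigma_k}|^2 \le C_1(K)$ means the second fundamental form is controlled, so by a standard argument (see, e.g., the graph lemma: a minimal surface with small $|A|$ in a ball of fixed size is locally a graph with small gradient and Hessian over its tangent plane) each connected component of $\Sigma_k \cap B_r(p)$ that comes close to $p$ is a graph over a disc in a suitable plane, with $C^{1,\alpha}$ norms controlled uniformly in $k$. The area bound $\mathrm{Area}(\Sigma_k \cap K) \le C_2(K)$ then forces the number of such sheets through $B_r(p)$ to be bounded by a fixed integer $m = m(K,r)$, since each sheet contributes a definite amount of area.

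\textbf{Step 2: Compactness of graphs via elliptic regularity.} Each graphing function $u_k$ solves the minimal surface equation, a quasilinear uniformly elliptic PDE with the $C^{1,\alpha}$ a priori bounds just obtained. Schauder estimates then bootstrap to uniform $C^{k,\alpha}$ bounds for every $k$, so by Arzel\`a--Ascoli a subsequence converges in $C^\infty_{\mathrm{loc}}$ to a limiting graph, which again solves the minimal surface equation, hence is a smooth minimal surface. Doing this simultaneously over a countable cover of $M$ by such balls and diagonalizing yields a subsequence of $\{\Sigma_k\}$ converging smoothly with multiplicity at most $m$ on each compact set to a minimal surface $\Sigma$; since the $\Sigma_k$ are embedded, $\Sigma$ is embedded. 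The limit is nonempty because each $\Sigma_k$ meets the fixed compact set $K_0$, where the curvature and area bounds apply, so some sheet persists in the limit there.

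\textbf{Step 3: Properness of the limit.} Finally I would check that $\Sigma$ is properly embedded: if not, $\Sigma$ would accumulate at some point $q$, but near $q$ the curvature bound again gives a graphical decomposition with a bounded number of sheets, contradicting accumulation. The step I expect to be the main obstacle is not conceptual but bookkeeping: making the multiplicity genuinely locally constant and reconciling the local multiplicities into a coherent global picture (the multiplicity can jump between components of $\Sigma$), and ensuring the area bound is used correctly to cap the sheet count uniformly. All of this is carried out in the cited references (Anderson's Compactness Theorem and Theorem 4.37 of \cite{MRR}), so in the write-up I would either cite them directly or reproduce the short chain Steps 1--3 above.
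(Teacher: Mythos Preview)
Your sketch is correct and follows the standard route (local graphical decomposition from the curvature bound, sheet count from the area bound, Schauder bootstrapping, diagonalization, and properness), which is precisely the argument in the cited references. The paper itself does not supply a proof of this theorem: it is stated as a quotation of Anderson's Compactness Theorem and Theorem~4.37 of \cite{MRR} and used as a black box, so there is no in-paper proof to compare against.
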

Note that the limit surface may be non-connected. \par

\vspace{2mm}

Let us consider  the sequence $\{\mathscr{L}_{k}\}$ and its limit. However, this sequence may not have Condition (2) in Theorem \ref{convergence} (see Section 5.2 of \cite{W1}). 

Therefore, $\{\mathscr{L}_{k}\}$ may not sub-converge with finite multiplicity. To overcome this, we consider the convergence to a lamination. 
Colding-Minicozzi's theory \cite{CM} shows that this sequence $\{\mathscr{L}_{k}\}$ sub-converges. Precisely, from [Proposition B.1, Page 610] in \cite{CM},  this sequence sub-converges to a minimal lamination $\mathscr{L}$ in $(M, g)$. Furthermore, we have that 

\begin{theorem}\label{min-lim}\textnormal{(\cite{W1}, Theorem 5.8 and 5.9 on Page 18)} If $(M, g)$ has positive scalar curvature, each leaf in $\mathscr{L}$ is a complete (non-compact) stable minimal surface.
\end{theorem} 

\subsection{Properness of the Limit Surfaces}  To sum up,  there is  a family $\{\mathscr{L}_{k}\}_{k}$ of laminations  sub-converging to a lamination $\mathscr{L}$. Each leaf in $\mathscr{L}$ is a complete (non-compact) embedded stable minimal surface in $(M, g)$. (See Theorem \ref{min-lim})

The remaining question is whether each leaf is properly embedded. The following theorem gives an answer. 

\begin{theorem}\label{proper}\textnormal{(\cite{W1}, Theorem 5.10 on Page 18)} Let $(M, g)$ be a complete oriented 3-manifold with positive scalar curvature $\kappa(x)$. Assume that $\Sigma$ is a complete non-compact stable minimal surface in $M$. Then, one has,$$\int_{\Sigma}\kappa(x)dv\leq 2\pi,$$where $dv$ is the volume form of the induced metric $ds^{2}$ over $\Sigma$. Moreover, if $\Sigma$ is an embedded surface, then $\Sigma$ is proper.

\end{theorem}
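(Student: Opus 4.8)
The plan is to establish the integral inequality first and then deduce properness as a consequence. For the inequality, I would use the stability of $\Sigma$ together with the Gauss equation and the positivity of the scalar curvature, following the Schoen–Yau / Fischer-Colbrie–Schoen circle of ideas. Recall that for a two-sided stable minimal surface the stability inequality reads $\int_\Sigma |\nabla \varphi|^2 \ge \int_\Sigma (|A|^2 + \mathrm{Ric}(\nu,\nu))\varphi^2$ for all $\varphi \in C_c^\infty(\Sigma)$. Using the Gauss equation and $|A|^2 \ge 0$, one rewrites $|A|^2 + \mathrm{Ric}(\nu,\nu) = \tfrac12\left(|A|^2 + \kappa - 2K_\Sigma + |A|^2\right) \ge \tfrac12\kappa - K_\Sigma$, where $K_\Sigma$ is the Gauss curvature of the induced metric and $\kappa$ the ambient scalar curvature. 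Feeding this into the stability inequality gives $\int_\Sigma |\nabla\varphi|^2 + \int_\Sigma K_\Sigma \varphi^2 \ge \tfrac12\int_\Sigma \kappa \varphi^2$. The first step, then, is to promote this to the statement that $\Sigma$ with a suitable conformally modified metric is complete and has total curvature controlled by $2\pi$; concretely one runs the Fischer-Colbrie–Schoen argument showing $-\Delta + K_\Sigma$ (or rather $-\Delta + \tfrac12\kappa$) has a positive solution, and then a Cohn-Vossen type estimate on the resulting metric yields $\int_\Sigma \kappa \, dv \le 2\pi$. I would cite the corresponding lemma from \cite{W1} for the precise execution, since this is exactly the genus-one input reused verbatim.

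Granting $\int_\Sigma \kappa\, dv \le 2\pi$ with $\kappa > 0$, the second and main step is properness. Suppose $\Sigma$ is an embedded but non-proper complete stable minimal surface. Then there is a point $p \in M$ lying in the closure $\overline{\Sigma}$ but not attained properly, so in a small ambient ball $B_r(p)$ the intersection $\Sigma \cap B_r(p)$ has infinitely many components accumulating at $p$ (or one component returning infinitely often). Using curvature estimates for stable minimal surfaces (the Schoen estimate, valid since $\kappa>0$ bounds $\mathrm{Ric}$ from below locally), these sheets have uniformly bounded second fundamental form near $p$, so they are graphs of bounded geometry over a fixed disc in $T_p\Sigma^{\mathrm{lim}}$; hence a definite amount of area, say $\ge \delta > 0$, accumulates in every neighborhood of $p$. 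Consequently $\mathrm{Area}(\Sigma \cap B_r(p)) = \infty$. But then, since $\kappa \ge \kappa_0 > 0$ on the compact set $\overline{B_r(p)}$, we get $\int_\Sigma \kappa\, dv \ge \kappa_0 \cdot \mathrm{Area}(\Sigma\cap B_r(p)) = \infty$, contradicting the bound $2\pi$. Therefore $\Sigma$ must be proper.

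The main obstacle I anticipate is the rigor of the non-properness dichotomy in the second step: one must argue carefully that a non-proper complete embedded minimal surface genuinely forces infinite area in a bounded region. The clean way is to invoke the lamination/foliation structure near an accumulation point — by the curvature estimates the closure $\overline{\Sigma}$ near $p$ is a minimal lamination, and $\Sigma$ being non-proper means it limits onto a leaf through $p$ from (at least) one side with infinitely many sheets, each contributing comparable area; summing gives infinite total area. A secondary technical point is two-sidedness: a stable minimal surface in an orientable $M$ is automatically two-sided (being embedded with trivial normal bundle since $M$ is contractible, hence orientable with $H_1=0$), so the stability inequality applies directly without passing to a double cover. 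Once these are in place, combining the finite-total-curvature bound with the local positivity of $\kappa$ closes the argument.
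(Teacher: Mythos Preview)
Your proposal is correct and follows the same overall strategy as the paper: obtain the bound $\int_\Sigma \kappa\,dv \le 2\pi$ via the Fischer-Colbrie conformal change and Cohn-Vossen, then derive properness by showing non-properness forces infinite area in a fixed ambient ball, contradicting the bound since $\kappa$ is bounded below there. The paper's execution of the first part is exactly what you sketch (positive Jacobi solution $u$, conformal metric $u^2 ds^2$ complete with $\tilde K_\Sigma\ge 0$, then Cohn-Vossen).

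For properness the paper takes a slightly more elementary route than yours. Rather than invoking Schoen's curvature estimate and a lamination picture to count graphical sheets, it works intrinsically: non-properness gives a sequence $p_k\in\Sigma\cap B(p,r/2)$ going to infinity in the intrinsic distance on $\Sigma$, so the intrinsic discs $B^\Sigma(p_k,r_0/2)$ are pairwise disjoint, and a direct monotonicity-type area lower bound (from \cite{FKR}) gives $\text{Area}(B^\Sigma(p_k,r_0/2))\ge C>0$; summing yields the same contradiction. Your approach works too, but the paper's avoids the curvature estimate and the somewhat delicate ``infinitely many sheets'' dichotomy you flagged as an obstacle---the intrinsic-distance formulation sidesteps it entirely. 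One small correction: your Gauss-equation rewrite overshoots by a factor in the $|A|^2$ term (the correct identity is $|A|^2+\mathrm{Ric}(\nu,\nu)=\tfrac12|A|^2+\tfrac12\kappa-K_\Sigma$), though the inequality $\ge \tfrac12\kappa-K_\Sigma$ you actually use is unaffected.
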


We will prove it in Appendix B.

\section{The Vanishing Property}
Let $(M, g)$ be a complete contractible Riemannian $3$-manifold of positive scalar curvature and  $\Sigma\subset (M, g)$ a complete (non-compact) embedded stable minimal surface. From [Theorem 2, Page 211] in \cite{SY} and Theorem \ref{proper}, the surface $\Sigma$ is a properly embedded plane (i.e.  diffeomorphic to $\mathbb{R}^{2}$). 

In the genus one case,  the geometry of such a stable minimal surface is constrained by Property P (see Theorem 4.2 of \cite{W1}). In general, its geometry is related with the fundamental group at infinity.  

\vspace{2mm}

Let $(M, g)$ and  $\{N_k\}$ be assumed as in Section 5. Theorem \ref{H} gives  an increasing family $\{R_{k}\}_{k}$ of closed handlebodies with Property $(H)$. 
 
\begin{definition}\label{Vanishing} A complete embedded stable minimal surface $\Sigma\subset (M, g)$ is called to satisfy the \emph{Vanishing Property} with respect to  $\{R_{k}\}_{k}$, if  there exists a positive integer $k(\Sigma)$ so that for any $k\geq k(\Sigma)$, any circle in $\Sigma\cap \partial R_{k}$ is contractible in $\partial R_{k}$.

Let $\mathscr{L}\subset (M, g)$ be  a stable minimal lamination  where each leaf is a complete (non-compact) stable minimal surface. It is said to have the \emph{Vanishing Property} with respect to  $\{R_{k}\}_{k}$, if there is a positive integer $k_{0}$ so that for any $k\geq k_{0}$ and each leaf $L_{t}$ in $\mathscr{L_{t}}$, then any circle in $L_{t}\cap \partial R_{k}$ is contractible in $\partial R_{k}$. 
\end{definition}

 We will prove in Corollary \ref{trivial-van1} and Theorem \ref{van2} that if $\pi^{\infty}_{1}(M)$ is trivial, any stable minimal lamination has the Vanishing property with respect to  $\{R_{k}\}_{k}$.

\begin{lemma}\label{van1} Let $(M, g)$ be a complete contractible Riemannian $3$-manifold with positive scalar curvature $\kappa(x)>0$ and $\{R_{k}\}_{k}$ a family of handlebodies  with Property (H).  If there is a complete embedded stable minimal  surface $\Sigma$ which does not satisfy the Vanishing property with respect to  $\{R_{k}\}_{k}$, then $\pi^{\infty}_{1}(M)$ is non-trivial. 
\end{lemma}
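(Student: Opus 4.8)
The plan is to argue directly by constructing a non-trivial element of $\pi_1^\infty(M)$ out of the failure of the Vanishing property, using Remark \ref{element} as the criterion. Since $\Sigma$ does not satisfy the Vanishing property for $\{R_k\}_k$, there is a strictly increasing sequence of integers $\{k_n\}_n$ such that for each $n$ the intersection $\Sigma\cap\partial R_{k_n}$ contains a circle $\sigma_n$ that is not contractible in $\partial R_{k_n}$. The first step is to upgrade each $\sigma_n$ to a \emph{meridian}: because $\Sigma$ is a properly embedded plane (by Theorem \ref{proper} together with Theorem 2 of \cite{SY}), the circle $\sigma_n\subset\Sigma$ bounds a disc $D_n\subset\Sigma$; applying Corollary \ref{general one} with $N'=R_0$ and $N=R_{k_n}$ — whose hypotheses hold by Property (H)(1)--(2) and Remark \ref{inj} — the disc $D_n$ contains a meridian $\gamma_n$ of $R_0$, which is an embedded circle in $\partial R_0$ that is nullhomotopic in $R_0$ but not in $\partial R_0$.

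The second step is to show these meridians $\gamma_n$ go to infinity and furnish a coherent system for $\pi_1^\infty$. Taking $K:=R_0$ as the compact set in Remark \ref{element}, I claim: (1) $\gamma_n$ is not nullhomotopic in $\overline{M\setminus R_0}$ — this is exactly the injectivity of $\pi_1(\partial R_0)\to\pi_1(\overline{M\setminus R_0})$ from Property (H)(2), since $\gamma_n$ is non-contractible in $\partial R_0$; (2) after passing to a further subsequence the $\gamma_n$ exit every compact set, because $\Sigma$ is proper and the discs $D_n\subset\Sigma$ are bounded by circles on $\partial R_{k_n}$ with $k_n\to\infty$, so $D_n$ (hence $\gamma_n\subset D_n$, after discarding those $D_n$ meeting $R_0$) eventually lies outside any given compact set; and (3) $\gamma_n$ is homotopic to $\gamma_{n+1}$ in $M\setminus R_0$. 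For (3), observe that $\gamma_n$ and $\gamma_{n+1}$ both lie on $\partial R_0$, which is connected, and they are each nullhomotopic in $R_0$, so they are freely homotopic to each other \emph{within} $R_0$; but I actually need a homotopy avoiding $R_0$. The cleaner route: $\gamma_n$ and $\gamma_{n+1}$ both bound the disc-with-holes pieces of $\Sigma$ cut off by $\partial R_0$ — more precisely, since $\Sigma\setminus R_0$ is a planar surface and $\gamma_n,\gamma_{n+1}$ are boundary circles of components of $\Sigma\cap\overline{M\setminus R_0}$ that are ``parallel'' along $\Sigma$, one connects them by a subsurface of $\Sigma$ lying in $\overline{M\setminus R_0}$, yielding the required free homotopy in $M\setminus R_0$. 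This is the step I expect to require the most care.

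\textbf{Main obstacle.} The hard part will be item (3): producing the homotopies $\gamma_n\simeq\gamma_{n+1}$ inside $M\setminus R_0$ and simultaneously arranging that the $\gamma_n$ escape to infinity while staying non-trivial in $\pi_1(M\setminus R_0)$. The subtlety is that the meridians $\gamma_n$ obtained from Corollary \ref{general one} all sit on the fixed surface $\partial R_0$, so "going to infinity" cannot mean the $\gamma_n$ themselves leave $R_0$; rather, one should work with a compact set $K$ slightly larger than $R_0$ and use the \emph{discs} $D_n\subset\Sigma$ — which do escape to infinity — to see that the relevant non-trivial loop data propagates outward. I would reconcile this by replacing $\gamma_n$ with a circle in $\Sigma\cap\partial R_0$ obtained as an innermost non-contractible circle of $\Sigma\cap\partial R_0$ lying in $D_n$, and then using the annular/planar pieces of $\Sigma$ between consecutive such circles to get the coherent homotopies; the properness of $\Sigma$ and the fact that it is a plane (so $\Sigma\cap\partial R_0$ has finitely many components, each separating $\Sigma$) make the combinatorics manageable. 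Once the family $\{\gamma_n\}$ and the homotopies are in hand, Remark \ref{element} immediately gives a non-trivial element of $\pi_1^\infty(M)$, completing the proof.
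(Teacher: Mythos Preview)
Your overall strategy---extract circles on $\Sigma$ from the failure of the Vanishing property and feed them into Remark~\ref{element}---is the paper's strategy too, but your execution misfires at several points and misses the main analytic ingredient.

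First, applying Corollary~\ref{general one} with $N'=R_0$ produces meridians of $R_0$, i.e.\ circles lying on the \emph{fixed} compact surface $\partial R_0$. Such curves can never go to infinity, and your suggested workaround (replace by innermost circles of $\Sigma\cap\partial R_0$) has the same defect. Moreover, your claim that the discs $D_n$ eventually lie outside any compact set is false: since $\partial D_n$ is nontrivial in $\pi_1(\overline{M\setminus R_0})$ (Remark~\ref{inj}), every $D_n$ must \emph{meet} $R_0$, so none of them can be discarded. The paper instead takes meridians of $R_{k_n}$ (circles on $\partial R_{k_n}$, a surface that moves outward with $n$), and then---because $\bigcup_k R_k$ need not equal $M$---uses Property~(H)(4) to replace each $\gamma_n\subset\partial R_{k_n}$ by a homotopic $\gamma'_n\subset\partial N_{j_{k_n}}$, which genuinely escapes to infinity.

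Second, you never use the positive scalar curvature beyond knowing $\Sigma\cong\mathbb{R}^2$, whereas it is the engine of the proof. The homotopy $\gamma_n\simeq\gamma_{n+1}$ in $M\setminus R_0$ should come from the planar annulus $D_{n+1}\setminus D_n\subset\Sigma$, but for that annulus to lie in $M\setminus R_0$ one must first arrange that the $D_n$ are \emph{nested} and that eventually $(D_{n+1}\setminus D_n)\cap R_0=\emptyset$. Both facts are proved by contradiction using the area bound $\int_\Sigma\kappa\,dv\le 2\pi$ of Theorem~\ref{proper}: if infinitely many of the $D_n$ were pairwise disjoint, or if infinitely many of the annuli $D_{n+1}\setminus D_n$ hit $R_0$, one would get infinitely many disjoint pieces of $\Sigma$ each meeting $R_0$, each carrying a definite amount of area by a monotonicity-type estimate, contradicting the bound. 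Your ``planar pieces of $\Sigma$'' heuristic for item~(3) cannot be completed without this input.
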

Roughly, there is a sequence of non-trivial circles in $\Sigma$ going to infinity. This sequence gives a non-trivial element in $\pi_{1}^{\infty}(M)$. 
\begin{proof} 
Since $\Sigma$ does not satisfy the Vanishing property with respect to  $\{R_{k}\}$,  there is a sequence $\{k_{n}\}_{n}$ of increasing integers  so that for each $k_{n}$, there is a circle $\gamma_{n}\subset \partial R_{k_{n}}\cap \Sigma$ which is not nullhomotopic in $\partial R_{k_{n}}$.
By [Theorem 2, Page 211] in \cite{SY}, $\Sigma$ is diffeomorphic to $\mathbb{R}^{2}$. Each $\gamma_{n}$ bounds a unique closed disc $D_{n}\subset \Sigma$.

\noindent \textbf{Remark:} The circle $\gamma_{n}$ may not be a meridian of $R_{k_{n}}$. Property (H) implies that $\pi_{1}(\partial R_{k_{n}})\rightarrow \pi_{1}(\overline{M\setminus R_{k_{n}}})$ is injective (see Definition \ref{P H}). Corollary \ref{general one} implies that    $D_{n}$ contains at least one meridian of $R_{k_{n}}$. 

\vspace{2mm}

Without loss of generality, we may assume that $\gamma_{n}$ is a meridian of $R_{k_{n}}$ and $\text{Int} ~D_{n}$ has no meridian of $R_{k_{n}}$. (If not, we can replace $\gamma_{n}$ by the meridian in $\text{Int} ~D_{n}$).

Since $\{\gamma_{n}\}_{n}$ is a collection of disjointly embedded circles in $\Sigma$, one of the following holds: for each $n$ and $n'$
\begin{center}
  (1)~$D_{n}\subset D_{n'}$; \quad 
 (2)~$D_{n'}\subset D_{n}$;\quad 
(3)~$D_{n}\cap D_{n'}=\emptyset$. 
\end{center}
Based on our assumption,  we know that

\quad \quad\quad \quad  \emph{$(\ast):$ for any $n'>n$, $D_{n}\subset D_{n'}$ or $D_{n}\cap D_{n'}=\emptyset$.}

 \noindent The reason is below: 
  If not, $D_{n'}$ is a subset of $D_{n}$. We use the  argument in the above remark to find a meridian curve in $D_{n'}\cap \partial R_{k_n} \subset \text{Int} D_n$, which is in contradiction with the above assumption.  

\vspace{2mm}

We will first show that there is an increasing subsequence of $\{D_{n}\}$ and  use  the subsequence to find  a non-trivial element in $\pi_{1}^{\infty}(M)$.

\vspace{2mm}

\noindent\emph{\textbf{Step 1}: the existence of the ascending subsequence of $\{D_{n}\}$}.

We argue by contradiction. Suppose that these is no ascending subsequence in $\{D_{n}\}$. 
Consider the partially ordered set $(\{D_{n}\}_{n}, \subset)$ induced by the inclusion. Let $C_{min}$ be the set of minimal elements in $(\{D_{n}\}_{n}, \subset)$. These discs in $C_{min}$ are disjoint in $\Sigma$. 

If the set $C_{min}$ is finite, we consider the integer $n_{0}:=\max\{n|~D_{n}\in C_{min}\}$. From the above fact $(\ast)$, the subsequence $\{D_{n}\}_{n>n_{0}}$ is an increasing subsequence, which contradicts  our hypothesis. Thus, we can conclude that the set $C_{min}$ is infinite. Namely, there is a subsequence $\{D_{n_{s}}\}_{s}$ of disjointly embedded discs.

\vspace{2mm}

 From Remark \ref{inj}, the map $\pi_{1}(\partial R_{k_{n_{s}}})\rightarrow \pi_{1}(M\setminus R_{0})$ is injective. Then, the disc $D_{n_{s}}$ intersects $R_{0}$.  
 Choose $x_{{s}}\in R_{0}\cap D_{n_{s}}$ and $r_{0}=\frac{1}{2}\min\{i_{0}, r\}$, where $r:=d^{M}(\partial R_{0},\partial R_{1})$ and  $i_{0}:=\inf_{x\in R_{1}}(\text{Inj}_{M}(x))$. Hence, the geodesic ball $ B(x_{{s}}, r_{0})\subset M$ lies in $R_{1}$.

 We apply [Lemma 1, Page 445] of \cite{YM} to the minimal surface $D_{n_{s}}\cap R_{1}$ in $(R_{1}, \partial R_{1})$ and  obtain 
\[\text{Area}(D_{n_{s}}\cap B(x_{{s}}, r_{0}))\geq C_{1}(K, i_{0}, r_{0}), \]
where $C_1$ is a constant independent of $n_s$ and $K$ is the bound of the sectional curvature on $R_1$. This leads to a contradiction from Theorem \ref{proper} as follows: 
\begin{equation*}
\begin{split}
2\pi\geq \int_{\Sigma}\kappa dv &\geq \int_{R_{1}\cap \Sigma}\kappa dv\geq \sum_{s}\int_{D_{n_{s}}\cap B(x_{{s}}, r_{0})} \kappa dv\\
&\geq \sum_{s} C\text{Area}(D_{n_{s}}\cap B(x_{{s}}, r_{0}))\\
&\geq \sum_{s}CC_{1}=\infty, 
\end{split}
\end{equation*}where  $C:=\inf_{x\in R_{1}}\kappa(x)>0$. 

\vspace{2mm}

Then, we can conclude that there is an ascending subsequence of $\{D_{n}\}_{n}$.  From now on, we abuse the notation and write $\{D_{n}\}$ for a ascending subsequence. 

\vspace{2mm}

\noindent\emph{\textbf{Step 2}: $\pi^{\infty}_{1}(M)$ is non-trivial.}

\vspace{2mm}

\noindent\textbf{Claim:} There is an integer $n_0$ so that for $n\geq n_0$, $(D_{n}\setminus D_{n-1})\cap R_{0}$ is empty. 

\vspace{2mm}

We argue by contradiction. Suppose that there exists a family $\{n_{l}\}$ of increasing integers such that $D_{n_{l}}\setminus D_{n_{l-1}}$ intersects $R_{0}$. 

Choose $x_{{l}}\in D_{n_{l}}\setminus D_{n_{l-1}}\cap R_{0}$. Hence, the geodesic ball $B(x_{l}, r_{0})\subset M$ is contained in $R_{1}$, where $r_{0}$ is assumed as above. We again apply [Lemma 1, Page 445] in \cite{YM} to the minimal surface $D_{n_{l}}\setminus D_{n_{l-1}}\cap R_{1}$ in $(R_{1}, \partial R_{1})$. 
$$\text{Area}((D_{n_{l}}\setminus D_{n_{l}-1})\cap B(x_{{l}}, r_{0}))\geq C_{1}(K, i_{0}, r_{0}).$$
From Theorem \ref{proper}, one gets a contradiction as follows:  
\begin{equation*}
\begin{split}
2\pi&\geq\int_{\Sigma}\kappa dv \geq \int_{R_{1}\cap \Sigma}\kappa dv\\&\geq \sum_{l}\int_{(D_{n_{l}}\setminus D_{n_{l-1}})\cap B(x_{{l}}, r_{0})} \kappa dv\\
&\geq \sum_{l} C\text{Area}(B(x_{{l}}, r_{0})\cap D_{n_{l}}\setminus D_{n_{l-1}})\\
&\geq C\sum_{l}C_{1}=\infty
\end{split}
\end{equation*}
It completes the proof of the claim. 
 
 \vspace{2mm}
 
 Therefore, for $n>n_0$, $\gamma_{n}$ is homotopic to $\gamma_{n_0}$ in $M\setminus R_{0}$ and not null-homotopic in $M\setminus R_{0}$.

 Because $\cup_{k}R_{k}$ may not be equal to $M$, the sequence $\{\gamma_{n}\}_{n>n_0}$ of circles may not go to infinity. In order to overcome it,  we choose a new family $\{\gamma'_{n}\}_{n>n_0}$ of circles going to infinity to replace it. 
 
The map $\pi_{1}(\partial R_{k_{n}}\cap \partial N_{j_{k_{n}}})\rightarrow \pi_{1}(\partial R_{k_{n}})$ is surjective (see Theorem \ref{H} and Definition \ref{P H}).  Hence, we can find a circle $\gamma'_{n}\subset \partial N_{j_{k_{n}}}\cap \partial R_{k_{n}}$ which is homotopic to $\gamma_{n}$ in $\partial R_{k_{n}}$.  The sequence of circles $\{\gamma'_{n}\}_{n\geq n_0}$ goes to infinity.

 The sequence $\{\gamma'_{n}\}$ also has the property that for $n>n_0$, \begin{itemize} \item $\gamma'_{n}$ is homotopic to $\gamma'_{n+1}$ in $M\setminus R_{0}$; \item$\gamma'_{n}$ is not null-homotopic in $M\setminus R_{0}$. \end{itemize} 
 From Remark \ref{element}, $\pi_{1}^{\infty}(M)$ is not trivial.  
\end{proof}

As a corollary, we have 
\begin{corollary}\label{trivial-van1} Let $(M, g)$ be a complete Riemannian $3$-manifold of positive scalar curvature and $\{R_{k}\}_{k}$ a family of handlebodies with Property (H). If $\pi_{1}^{\infty}(M)$ is trivial, then any complete stable minimal surface in $(M, g)$ has the Vanishing property with respect to $\{R_{k}\}_{k}$. 
\end{corollary}

\begin{theorem}\label{van2} Let $(M, g)$ be a complete Riemannian $3$-manifold of positive scalar curvature and  a family of handlebodies $\{R_{k}\}_{k}$ with Property (H). If each leaf in a lamination $\mathscr{L}$ is a complete (non-compact) stable minimal surface satisfying the Vanishing property with respect to  $\{R_{k}\}_{k}$, then the lamination $\mathscr{L}$ also has the Vanishing property with respect to  $\{R_{k}\}_{k}$.
\end{theorem}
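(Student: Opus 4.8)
The plan is to argue by contradiction, along the lines sketched in Section 1.3. Suppose $\mathscr{L}$ does not have the Vanishing property for $\{R_{k}\}_{k}$. Negating Definition \ref{Vanishing}, there is a sequence $k_{n}\to\infty$, a sequence of leaves $\{L_{t_{n}}\}_{n}$ of $\mathscr{L}$, and for each $n$ an embedded circle $\gamma_{n}\subset L_{t_{n}}\cap\partial R_{k_{n}}$ which is not nullhomotopic in $\partial R_{k_{n}}$. Since $(M,g)$ has positive scalar curvature, by Theorem 2 of \cite{SY} and Theorem \ref{proper} each $L_{t_{n}}$ is a properly embedded plane, so $\gamma_{n}$ bounds a unique embedded closed disc $D_{n}\subset L_{t_{n}}\subset M$. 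Because $\pi_{1}(\partial R_{k_{n}})\to\pi_{1}(\overline{M\setminus R_{0}})$ is injective (Remark \ref{inj}(2)) and $\pi_{1}(\partial R_{0})\to\pi_{1}(\overline{M\setminus R_{0}})$ is injective (Definition \ref{P H}(2)), Corollary \ref{general one} applied to $R_{0}\subset\text{Int}~R_{k_{n}}$ shows that $D_{n}$ contains a meridian of $R_{0}$; in particular $L_{t_{n}}$ meets the fixed compact set $\partial R_{0}$. Choosing $p_{n}\in L_{t_{n}}\cap\partial R_{0}$ and using compactness of $\partial R_{0}$, closedness of the lamination, and the curvature estimates underlying the Colding--Minicozzi convergence theory recalled in Section 5.2, a subsequence of $\{L_{t_{n}}\}$ converges in $C^{2}_{\mathrm{loc}}$ with finite multiplicity to a leaf $L_{t_{\infty}}$ of $\mathscr{L}$; if infinitely many $L_{t_{n}}$ equal $L_{t_{\infty}}$ we already contradict the Vanishing property of $L_{t_{\infty}}$, so we may assume the $L_{t_{n}}$ are pairwise distinct.

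Next I would convert the $\gamma_{n}$ into meridians of a \emph{fixed} handlebody. Let $k_{\infty}:=k(L_{t_{\infty}})$ be the integer provided by the Vanishing property of $L_{t_{\infty}}$. For $n$ large, $k_{n}>k_{\infty}$ and $R_{k_{\infty}}\subset\text{Int}~R_{k_{n}}$, so $\partial R_{k_{n}}\subset\overline{M\setminus R_{k_{\infty}}}$; since $\pi_{1}(\partial R_{k_{n}})\to\pi_{1}(\overline{M\setminus R_{0}})$ is injective (Remark \ref{inj}(2)) and this map factors through $\pi_{1}(\overline{M\setminus R_{k_{\infty}}})$ via inclusions, the circle $\gamma_{n}$ is not nullhomotopic in $\overline{M\setminus R_{k_{\infty}}}$. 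Applying Corollary \ref{general one} once more, now to $R_{k_{\infty}}\subset\text{Int}~R_{k_{n}}$ and the embedded disc $D_{n}$ (its hypothesis (2) being Definition \ref{P H}(2) for $k=k_{\infty}$), we obtain a meridian $\mu_{n}\subset D_{n}$ of $R_{k_{\infty}}$. Thus for all large $n$ there is a circle $\mu_{n}\subset L_{t_{n}}\cap\partial R_{k_{\infty}}$ which is not nullhomotopic in $\partial R_{k_{\infty}}$.

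The final step is to let $n\to\infty$ inside the fixed compact surface $\partial R_{k_{\infty}}$. After a small ambient isotopy (which affects neither the homotopy classes of curves in $\partial R_{k_{\infty}}$ nor the Vanishing property of $L_{t_{\infty}}$) we may assume $\partial R_{k_{\infty}}$ is transverse to $L_{t_{\infty}}$, hence to $L_{t_{n}}$ for $n$ large. Since $L_{t_{\infty}}$ is proper, $L_{t_{\infty}}\cap R_{k_{\infty}}$ is compact; on a small tubular neighborhood $U$ of it the $C^{2}_{\mathrm{loc}}$ convergence with finite multiplicity forces $L_{t_{n}}\cap U$ to be a finite disjoint union of normal graphs over $L_{t_{\infty}}$, and for $n$ large $L_{t_{n}}\cap R_{k_{\infty}}\subset U$. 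Being connected, $\mu_{n}$ lies in one such graph, so it is $C^{1}$-close to one of the finitely many components of $L_{t_{\infty}}\cap\partial R_{k_{\infty}}$; after passing to a further subsequence we may fix that component $c$. Then $\mu_{n}$ is isotopic to $c$ in $\partial R_{k_{\infty}}$, so $c$ is not nullhomotopic in $\partial R_{k_{\infty}}$. But $c\subset L_{t_{\infty}}\cap\partial R_{k_{\infty}}$ and $k_{\infty}\geq k(L_{t_{\infty}})$, which contradicts the Vanishing property of $L_{t_{\infty}}$. This contradiction establishes the theorem, with $k_{0}$ any integer past which no such sequence can be produced.

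I expect the decisive point to be this last paragraph: one must guarantee that the non-nullhomotopic curves $\mu_{n}$ do not collapse in the limit, which is precisely what the $C^{2}_{\mathrm{loc}}$ convergence with finite multiplicity and the properness of the leaves (Theorem \ref{proper}) deliver, and one must arrange transversality of $\partial R_{k_{\infty}}$ with the leaves without disturbing the topological statements. A secondary source of care is the bookkeeping of which induced maps on $\pi_{1}$ are injective: this relies on Property (H) (Definition \ref{P H}) and on the fact that the factors of an amalgamated free product embed, exactly as in Remark \ref{inj}.
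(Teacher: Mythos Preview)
Your overall strategy matches the paper's: argue by contradiction, extract a convergent subsequence of the offending leaves, and use Corollary~\ref{general one} to push the non-contractible curves down to a fixed level where the Vanishing property of the limit yields a contradiction. The two applications of Corollary~\ref{general one} and the $\pi_1$-injectivity bookkeeping via Remark~\ref{inj} are correct.

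There is, however, a genuine gap in the convergence step. You assert that a subsequence of $\{L_{t_n}\}$ converges to a \emph{single} leaf $L_{t_\infty}$, but Theorem~\ref{convergence} only yields convergence to a properly embedded minimal surface which may well be disconnected (the paper notes this explicitly right after the statement). The limit is thus a sublamination $\mathscr{L}'\subset\mathscr{L}$ with possibly several leaves, and your claim that $L_{t_n}\cap R_{k_\infty}\subset U$ for $U$ a neighborhood of $L_{t_\infty}\cap R_{k_\infty}$ alone is unjustified: the meridian $\mu_n\subset\partial R_{k_\infty}$ may accumulate on some other leaf $L_{t'}\subset\mathscr{L}'$ whose vanishing index $k(L_{t'})$ exceeds your chosen $k_\infty=k(L_{t_\infty})$, and then no contradiction arises.

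The paper repairs this as follows. Since $\mathscr{L}'$ is properly embedded, only finitely many of its leaves $L_{t_1},\dots,L_{t_m}$ meet $R_0$; any leaf of $\mathscr{L}'$ disjoint from $R_0$ automatically has every circle in $\partial R_k$ contractible for all $k>0$ (such a circle bounds a disc in that leaf missing $R_0$, and $\pi_1(\partial R_k)\to\pi_1(\overline{M\setminus R_0})$ is injective by Remark~\ref{inj}). One then chooses $k\geq\sum_{s=1}^m k(L_{t_s})$, so that \emph{every} circle of $\mathscr{L}'\cap\partial R_k$ is contractible in $\partial R_k$. The endgame is also run slightly differently and more cleanly: rather than normal graphs in $M$, the paper takes a tubular neighborhood $U$ of the $1$-manifold $\mathscr{L}'\cap\partial R_k$ \emph{inside the surface} $\partial R_k$, so that $\pi_1(U)\to\pi_1(\partial R_k)$ is trivial; convergence of $L_{t_n}$ to $\mathscr{L}'$ then forces $L_{t_n}\cap\partial R_k\subset U$ for large $n$, contradicting the meridian of $R_k$ in $L_{t_n}\cap\partial R_k$ produced by Corollary~\ref{general one}.
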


\begin{proof} We argue by contradiction. Suppose that there exists a sequence $\{L_{t_{n}}\}$ of leaves in $\mathscr{L}$ and a sequence $\{k_{n}\}_{n}$ of increasing integers  so that  some circle $\gamma_{n} \subset L_{t_{n}}\cap \partial R_{k_{n}}$ is not homotopically trivial  in $\partial R_{k_{n}}$ for each $n$. 

 The leaf $L_{t_{n}}$ is a complete (non-compact) stable minimal surface.  From [Theorem 2, Page 211] in \cite{SY}, it is diffeomorphic to $\mathbb{R}^{2}$. The circle $\gamma_{n}$ bounds a unique closed disc $D_{n}\subset L_{t_{n}}$.  Since $\gamma_{n}$ is not null-homotopic in $\overline{M\setminus R_{0}}$ (See Remark \ref{inj}), the disc $D_{n}$ intersects $R_{0}$.
 
 \vspace{2mm}
\noindent \textbf{Step 1}: \emph{The sequence $\{L_{t_{n}}\}_{n}$ sub-converges smoothly with finite multiplicity.}

\vspace{2mm}
Since each $L_{t_{n}}$ is a stable minimal surface, we use [Theorem 3, Page 122] in \cite{Schoen} to show that, for a fixed compact set $K\subset M$, there exists a constant $C_{1}=C_{1}(K, M, g)$ satisfying that 
$$|A_{L_{t_{n}}}|^{2}\leq C_{1}~\text{on}~K\cap L_{t_{n}}$$
where $|A_{L_{t_{n}}}|^{2}$ is the squared norm of the second fundamental form of $L_{t_{n}}$. 

From Theorem \ref{proper}, $\int_{L_{t_{n}}}\kappa dv\leq 2\pi$, hence
$$\text{Area}(K\cap L_{t_{n}})\leq 2\pi(\inf_{x\in K} \kappa(x))^{-1}.$$ 
From Theorem \ref{convergence}, the sequence $\{L_{t_{n}}\}_{n}$ smoothly sub-converges  to a sublamination $\mathscr{L}'$  of $\mathscr{L}$ with finite multiplicity. In addition, $\mathscr{L}'$  is also properly embedded (see Theorem \ref{convergence}). 

The lamination $\mathscr{L}'$ may has infinitely many components. Let $\mathscr{L}''\subset \mathscr{L}'$ be a set of leaves  intersecting $R_{0}$. 
Since $\mathscr{L}'$ is properly embedded, $\mathscr{L}''$ has finitely many leaves. 

\vspace{1mm}

Since each leaf $L_{t}$ in $\mathscr{L'}$ is homeomorphic to $\mathbb{R}^2$, an embedded circle $\gamma\subset \partial R_{k}\cap L_{t}$ bounds a unique disc $D\subset L_{t}$ for $k>0$. 

If $L_{t}$ is in $\mathscr{L}'\setminus \mathscr{L}''$, the intersection $D\cap R_{0}$ is empty. Namely, $\gamma$ is homotopically trivial in $\overline{M\setminus R_{0}}$. Since the map $\pi_{1}(\partial R_{k})\rightarrow \pi_{1}(\overline{M\setminus R_{0}})$ is injective for $k>0$ (see Remark \ref{inj}), then $\gamma$ is null-homotopic in $\p R_{k}$. 

Therefore, we can conclude that for any $k>0$ and any leaf $L_{t}\in \mathscr{L}'\setminus \mathscr{L}''$, any circle in $L_{t}\cap \p R_{k}$ is homotopically trivial in $\partial R_{k}$. 

\vspace{2mm}

\noindent\textbf{Step 2:} \emph{The Vanishing property gives a contradiction.}

\vspace{2mm}

From now on, we abuse the notation and write $\{L_{t_{n}}\}$ for a convergent sequence. In addition, we assume the lamination $\mathscr{L}'':=\amalg^{m}_{s=1} L_{t_{s}}$ ($\mathscr{L}''$ has finitely many leaves).

The Vanishing property gives an integer $k(L_{t_{s}})$ for $L_{t_{s}}$. For $k\geq \sum_{s=1}^{m} k(L_{t_{s}})$, any circle in $\partial R_{k}\cap \mathscr{L}''$ is contractible in $\partial R_{k}$.  From the above fact, for $k>0$, any closed curve in $\partial R_{k}\cap \mathscr{L}'\setminus \mathscr{L}''$ is also homotopically trivial in $\partial R_{k}$. 

Therefore, for any $k\geq\sum_{s=1}^{m} k(L_{t_{s}})$,   any circle in $\partial R_{k}\cap \mathscr{L}'$ is homotopically trivial  in $\partial R_{k}$.  

\vspace{2mm}

We fix the integer $k\geq \sum^{m}_{s=1} k(L_{t_s})$ and have the following: 

\vspace{2mm}

\noindent\textbf{Claim:} For $n$ large enough, any circle in $\p R_{k}\cap L_{t_{n}}$ is homotopically trivial in $\p R_{k}$. 

\vspace{2mm}

 We may assume that $\mathscr{L}'$ intersects $\partial R_{k}$ transversally. Since $\mathscr{L}'$ is properly embedded, $\partial R_{k}\cap \mathscr{L}'$ has finitely many components.  Each component of $\partial R_{k}\cap \mathscr{L}'$ is an embedded circle. From the above fact, it is homotopically trivial in $\partial R_{k}$. That is to say, 
 \begin{equation*}
\pi_{1}(\partial R_{k}\cap \mathscr{L}')\rightarrow \pi_{1}(\p R_{k})~ \text{is a trivial map}. 
 \end{equation*}

Choose an open tubular neighborhood $U$ of $\mathscr{L}'\cap \partial R_{k}$ in $\partial R_{k}$. It is  homotopic to $\p R_{k}\cap \mathscr{L'}$ in $\p R_{k}$. Thus, the map $\pi_{1}(U)\rightarrow \pi_{1}(\partial R_{k})$ is also a trivial map.

Since  $\{L_{t_{n}}\}$ converges to $\mathscr{L}'$,  $L_{t_{n}}\cap \partial R_{k}$ is contained in $U$ for $n$ large enough. Hence, the map $\pi_{1}(\p R_k\cap L_{t_{n}})\rightarrow \pi_1(\p N_{k})$ is trivial. Namely, any circle in $\partial R_{k}\cap L_{t_{n}}$ is homotopically trivial in $\partial R_{k}$. The claim follows. 

\vspace{2mm}

  The boundary $\gamma_{n}\subset \partial R_{k_{n}}$ of $D_{n}$ is non-contractible in $\partial R_{k_{n}}$.  It is also non-contractible in $\overline{M\setminus R_{0}}$ (see Remark \ref{inj}). If $k_{n}> k$, we use  Corollary \ref{general one} to find a meridian $\gamma' \subset L_{t_{n}}\cap \p R_{k}$ of $R_{k}$. This is in contradiction with the above claim. \end{proof}

As a consequence, we have 
\begin{corollary}\label{trivial-van2} Let $(M, g)$ be a complete contractible Riemannian manifold of positive scalar curvature and $\{R_{k}\}_{k}$ a family of handlebodies with Property (H). If $\pi_{1}^{\infty}(M)$ is trivial, then any complete stable minimal lamination in $(M, g)$ has the Vanishing property with respect to $\{R_{k}\}_{k}$. 
\end{corollary}

\section{Proof of Main Theorems}

In  Sections 7 and 8, we will complete the proof of Theorem \ref{A}. We will use the following notions and assumptions: 

\begin{itemize}[leftmargin=15pt]
\item Let $(M, g)$ be a complete contractible $3$-manifold with trivial $\pi^\infty_1(M)$ and with positive scalar curvature. 
\item The manifold $M$ is an increasing union of closed handlebodies $\{N_k\}_k$. 
\item Assume that $M$ is not homeomorphic to $\mathbb{R}^3$. 
\item As in Remark \ref{ball}, we may assume that each $N_{k}$ is homotopically trivial in $N_{k+1}$ and none of the $N_{k}$ is contained in a $3$-ball (see Remark \ref{ball}). In addition, the genus of $N_{k}$ is greater than zero, for $k>0$. 
\item Each $N_k$ has a system of meridians $\{\gamma^{l}_{k}\}_{l=1}^{g(N_{k})}$. There is  a lamination $\mathscr{L}_{k}:=\amalg_{l} \Omega^{l}_{k}\subset N_k$, where each leaf $\Omega^{l}_{k}$ is a disc with boundary $\gamma^{l}_{k}\subset \partial N_{k}$
\item As in Section 5, $\mathscr{L}_k$ sub-converges to a  lamination $\mathscr{L}:=\cup_{t\in \Lambda}L_{t}$, where each leaf $L_{t}$ is a complete (non-compact) stable minimal surface. 

\end{itemize}

Our main approach is to argue by contradiction. We suppose that $(M, g)$ has positive scalar curvature. 
 Each leaf in $\mathscr{L}$ is a properly embedded plane (see Theorem \ref{proper} and a result of Schoen and Yau \cite{SY}).

 \vspace{2mm}
 
We now study the lamination $\mathscr{L}$ and its relationship with  Vanishing property.

\emph{Since $\pi^{\infty}_{1}(M)$ is trivial, there is an ascending family $\{R_{k}\}_{k}$ of  handlebodies  satisfying Property (H), so that} 
\begin{itemize}[leftmargin=15pt]
\item[(a)]\emph{the lamination $\mathscr{L}$ has the Vanishing property with respect to $\{R_{k}\}_{k}$;}
\item[(b)]\emph{ for each $k$ and any $N_{j}$ containing $R_{k}$, the intersection $\mathscr{L}_{j}\cap \partial R_{k}$ has at least one meridian of $R_{k}$.}
\end{itemize}
 
 The reason is  as follows:  since $M$ is not homeomorphic to $\mathbb{R}^{3}$, then we use Theorem \ref{H} to find  $\{R_{k}\}_{k}$.  Since $\pi^{\infty}_{1}(M)$ is trivial, Corollary \ref{trivial-van2} shows that the lamination $\mathscr{L}$ has the Vanishing Property with respect to this family. 
 
None of the $R_{k}$ is contained in a $3$-ball (see Definition \ref{P H}).  Together with Property (H), we use Corollary \ref{inter} to show that if $N_{j}$ contains $R_{k}$, the intersection $\mathscr{L}_{j}\cap \partial R_{k}$ has at least one meridian of $R_{k}$.

 \begin{remark}\label{general} In Section 8, our proof requires that $\p R_{k}$ intersects some leaf $L_{t}$ transversally. In order to overcome it, we  will deform the handlebody $R_{k}$ in a small tubular neighborhood of $\p R_{k}$ so that the boundary of the new handlebody intersects $L_{t}$ transversally. 
 
 This new handlebody also satisfies  (a) and (b). The reason is as follows: 
 
For any handlebody ${R}'_{k}$ obtained by deforming $R_{k}$,  the maps $\pi_{1}(\partial {R}'_{k})\rightarrow \pi_{1}(\overline{{R}'_{k}\setminus R_{0}})$ and $\pi_{1}(\partial {R}'_{k})\rightarrow \pi_{1}(\overline{M\setminus {R}'_{k}})$ are both injective. The proofs of the items (a) and (b) just depend on the  injectivity  of these two maps. 
Hence,  the items (a) and (b) are also true for the handlebody $R'_k$. 
 \end{remark}

We use the item (a) to show that  \par
\vspace{1mm}
\noindent\emph{There is an integer $k_{0}>0$ so that for any $k\geq k_{0}$ and any leaf $L_{t}$ in $\mathscr{L}$, any circle in ${L}_{t}\cap \partial R_{k}$ is null-homotopic  in $\partial R_{k}$.}\par
\vspace{1mm}

This fact implies a covering lemma. We will prove it in Section 8. 
\begin{lemma}\label{covering} For any $k\geq k_{0}$, $\partial R_{k}(\epsilon)\cap \mathscr{L}$ is contained in a disjoint union of finitely many closed discs in $\partial R_{k}(\epsilon)$, where $R_{k}(\epsilon):=R_{k}\setminus N_{\epsilon}(\partial R_{k})$, $N_{\epsilon}(\partial R_{k})$ is some tubular neighborhood of $\partial R_{k}$ in $R_{k}$.
\end{lemma}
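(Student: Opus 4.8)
The plan is to exploit the Vanishing property of $\mathscr{L}$ together with the fact that, near $\partial R_k$, the lamination $\mathscr{L}$ is properly embedded and hence meets $\partial R_k$ and the nearby level hypersurfaces in only finitely many circles. Fix $k \geq k_0$. First I would choose $\epsilon > 0$ small enough that the tubular neighborhood $N_\epsilon(\partial R_k) \cong \partial R_k \times [0,\epsilon]$ is foliated by the hypersurfaces $\partial R_k(t) := \partial(R_k \setminus N_t(\partial R_k))$ for $t \in [0,\epsilon]$, each of which is isotopic to $\partial R_k$ inside $R_k$, and such that (after the generic deformation allowed by Remark \ref{general}) $\mathscr{L}$ meets each $\partial R_k(t)$ transversally. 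Since $\mathscr{L}$ is properly embedded (Theorem \ref{proper} and the convergence theory of Section 5), the intersection $\mathscr{L} \cap \partial R_k(\epsilon)$ is a compact $1$-manifold, hence a finite disjoint union of embedded circles $\{c_1, \dots, c_m\}$ in the closed surface $\partial R_k(\epsilon)$.

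Next I would upgrade the Vanishing property to the perturbed surface: because $\partial R_k(\epsilon)$ is isotopic to $\partial R_k$ in $M$ and the isotopy can be taken disjoint from $R_0$-type obstructions (again using that the relevant $\pi_1$-injectivity statements in Property (H) are preserved under such deformations, exactly as in Remark \ref{general}), each $c_i$ is nullhomotopic in $\partial R_k(\epsilon)$. Indeed, each $c_i$ lies on some leaf $L_{t}$ of $\mathscr{L}$; transporting through the product region and applying the Vanishing property statement for $\{R_k\}$ (valid for $k \geq k_0$), together with Corollary \ref{general one} to rule out that a non-contractible $c_i$ could fail to bound while being nullhomotopic in $\overline{M \setminus R_0}$, forces $[c_i] = 0$ in $\pi_1(\partial R_k(\epsilon))$.

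Now I would run the standard innermost-disc argument on the closed surface $\partial R_k(\epsilon)$. Since each $c_i$ is a nullhomotopic simple closed curve on a surface, it bounds an embedded disc there; take an innermost one among the $c_i$, say $c_{i_1}$ bounding $D_{i_1} \subset \partial R_k(\epsilon)$ with no other $c_j$ inside. Then peel off $c_{i_1}$, pass to the surface with $D_{i_1}$ removed and recap, and induct on the number of circles. This produces a finite collection of pairwise disjoint closed discs $D_1, \dots, D_r \subset \partial R_k(\epsilon)$ whose union contains all of $\mathscr{L} \cap \partial R_k(\epsilon)$; after absorbing nested discs into their outermost representatives we may take the $D_j$ disjoint. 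This is precisely the assertion of Lemma \ref{covering}.

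The main obstacle I anticipate is the transversality/perturbation bookkeeping: one must be sure that sliding $\partial R_k$ to $\partial R_k(\epsilon)$ preserves both the finiteness of the intersection with $\mathscr{L}$ (which needs properness of the limit lamination, hence Theorem \ref{proper}) and the Vanishing conclusion — i.e., that a circle of $\mathscr{L} \cap \partial R_k(\epsilon)$ is nullhomotopic on $\partial R_k(\epsilon)$ and not merely on $\partial R_k$. Both points are handled by the observation in Remark \ref{general} that the $\pi_1$-injectivity properties in Property (H) and the proofs of a) and b) depend only on those injectivity statements, which survive small deformations of $R_k$; the secondary technical point is merely that a nullhomotopic simple closed curve on a closed orientable surface bounds an embedded disc there, which is classical.
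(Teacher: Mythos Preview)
There is a genuine gap. You are treating $\mathscr{L}$ as if it were a single properly embedded surface, but it is a \emph{lamination}: a closed subset of $M$ which is a (possibly infinite, possibly accumulating) union of leaves. Theorem~\ref{proper} tells you that each individual leaf $L_t$ is properly embedded, not that $\mathscr{L}$ as a whole is a properly embedded surface. Consequently the step ``$\mathscr{L}\cap\partial R_k(\epsilon)$ is a compact $1$-manifold, hence a finite disjoint union of embedded circles'' is unjustified: a priori there may be infinitely many leaves meeting $\partial R_k(\epsilon)$, each contributing finitely many circles, and these circles may accumulate (the local model of a lamination is $\mathbb{R}^2\times C$ for a closed set $C$, so the transverse intersection with a surface is a $1$-lamination, not a $1$-manifold). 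For the same reason, your appeal to a single generic perturbation making $\mathscr{L}$ transverse to $\partial R_k(\epsilon)$ is not available: one can perturb to make finitely many leaves transverse, but not uncountably many simultaneously.

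This is exactly the difficulty the paper's Section~8 is built to overcome. Rather than intersecting $\mathscr{L}$ directly with $\partial R_k(\epsilon)$, the paper associates to each component $\Sigma^t_i$ of each leaf in $R_k$ a region $B^t_i$ (via Lemma~\ref{existence}), partially orders these by inclusion, and passes to the set $S$ of maximal ones meeting $R_k(\epsilon/2)$. The substantive work is proving $S$ is finite: Lemma~\ref{finite1} bounds the contribution of a fixed leaf (area estimate plus $\int\kappa\le 2\pi$), and Lemma~\ref{finite2} shows only finitely many leaves contribute, via a compactness argument (Theorem~\ref{convergence}) and a multiplicity-one contradiction with maximality. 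Only after $S$ is finite does the paper take the (now finitely many) boundary circles, invoke the Vanishing property to get bounding discs, and pass to outermost discs --- which is your Step~3. Your innermost-disc argument is fine once finiteness is in hand; what is missing is precisely the mechanism that reduces the problem to finitely many circles.
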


We now use the covering lemma to finish the proof of Theorem \ref{A}. 

\begin{proof} Suppose that a complete contractible $3$-manifold $(M, g)$, with positive scalar curvature and trivial $\pi^{\infty}_{1}$, is not homeomorphic to $\mathbb{R}^{3}$. As above,  there is an ascending family $\{R_{k}\}_{k}$ of  handlebodies with Property (H), so that

(a) the lamination $\mathscr{L}$ has the Vanishing property with respect to  $\{R_{k}\}_{k}$; 
 
(b) for each $k$ and any $N_{j}$ containing $R_{k}(\epsilon)$, the intersection $\mathscr{L}_{j}\cap \partial R_{k}(\epsilon)$ has at least one meridian of $R_{k}(\epsilon)$.

The Vanishing property implies  Lemma \ref{covering} (We will show it in Section 8). That is to say, the intersection $\mathscr{L}\cap \partial R_{k}(\epsilon)$ is in the union of  disjoint closed discs $\{ D_{i}\}_{i=1}^{s}$ for $k\geq k_{0}$.

\vspace{2mm}

 Choose an open neighborhood $U$ of the closed set $\mathscr{L}\cap {R}_{k+1}$ so that $U\cap \partial R_{k}(\epsilon)$ is contained in a disjoint union $\amalg_{i=1}^{s}D'_{i}$, where $D'_{i}$ is an open tubular neighborhood of $D_{i}$ in $\partial R_{k}(\epsilon)$. Each $D'_{i}$ is an open disc in $\partial R_{k}(\epsilon)$. 

Since $\mathscr{L}_{k}$ subconverges to $\mathscr{L}$, there exists an integer $j$, large enough, satisfying 
 \begin{center}
 
(1)$\mathscr{L}_{j}\cap R_{k+1}\subset U$; \quad (2)
 $R_{k}(\epsilon)$ is contained in $N_{j}$. 
 \end{center}
Therefore, $\mathscr{L}_{j}\cap \partial R_{k}(\epsilon)$ is contained in $U\cap \partial R_{k}(\epsilon)\subset \amalg D'_{i}$. The induced map $\pi_{1}(\mathscr{L}_{j}\cap \partial R_{k}(\epsilon))\rightarrow \pi_{1}(\amalg_{i}D'_{i})\rightarrow \pi_{1}(\partial R_{k}(\epsilon))$ is a trivial map. We can conclude that  any circle in $\mathscr{L}_{j}\cap \partial R_{k}(\epsilon)$ is homotopically trivial  in $\partial R_{k}(\epsilon)$.

\vspace{2mm}

However, from (b), there exists a meridian $\gamma \subset \mathscr{L}_{j}\cap \partial R_{k}(\epsilon)$ of $R_{k}(\epsilon)$. This contradicts  the last paragraph and  finishes the proof of Theorem \ref{A}.
\end{proof}

\section{The proof of Lemma \ref{covering}}

 This section is the same  as Section 7 of \cite{W1}. In order to prove Lemma \ref{covering}, we introduce a set $S$ and prove its finiteness which will imply Lemma \ref{covering}. We begin with two topological lemmas. 
 
 \begin{lemma}\label{existence} Let $(\Omega, \partial \Omega) \subset (N, \partial N)$ be a 2-sided embedded disc with some closed sub-discs removed, where $N$ is a closed handlebody of genus $g>0$. Assume that each circle $\gamma_{i}$ is contractible in $\partial N$, where $\partial \Omega=\amalg_{i}\gamma_{i}$. Then $N\setminus \Omega$ has two connected components. Moreover, there is a unique component $B$ satisfying that the induced map $\pi_{1}(B)\rightarrow\pi_{1}(N)$ is trivial.  
\end{lemma}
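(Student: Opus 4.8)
The plan is to analyze the topology of $N \setminus \Omega$ by first capping off the boundary circles of $\Omega$ with discs in $\partial N$ to produce a closed embedded surface, and then invoking the irreducibility of handlebodies. First I would observe that since each $\gamma_i$ is contractible in $\partial N$, it bounds a disc $D_i \subset \partial N$; after an innermost-disc argument (choosing the $\gamma_i$'s so the $D_i$'s are nested or disjoint and taking maximal ones) we may cap $\Omega$ along these discs to obtain a closed embedded surface $S_\Omega$ in $N$, which is a sphere since $\Omega$ is planar (a disc with subdiscs removed) and each cap is a disc. By the Poincar\'e Conjecture a handlebody is irreducible, so $S_\Omega$ bounds a closed $3$-ball $\mathcal{B} \subset N$. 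Pushing the caps slightly off $\partial N$ into the interior, this shows $\Omega$ together with a sub-surface of $\partial N$ cuts off a ball-like region; more precisely, $\Omega$ separates $N$ locally (it is $2$-sided and properly embedded up to the boundary behaviour), and a Mayer--Vietoris / connectivity argument shows $N \setminus \Omega$ has exactly two components. The key input for "exactly two" is that $\Omega$ is connected and $2$-sided, so a small bicollar $\Omega \times (-1,1)$ has two sides, and any two points of $N \setminus \Omega$ near $\Omega$ on the same local side can be joined; the global count then follows because $N$ is connected.

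Next I would identify the distinguished component $B$. Write $N \setminus \Omega = B \sqcup B'$. The surface $S_\Omega$ (obtained by capping) bounds a $3$-ball $\mathcal{B}$; I claim $B$ is the component on the ball side, i.e. $B$ is the component such that $B \cup (\text{caps})$ is the ball $\mathcal{B}$, so $B$ deformation retracts into $\mathcal{B}$ and hence $\pi_1(B) \to \pi_1(N)$ factors through $\pi_1(\mathcal{B}) = 1$ and is trivial. For uniqueness I must rule out that the other component $B'$ also has trivial $\pi_1 \to \pi_1(N)$: here I would use that $N$ has genus $g > 0$, so $\pi_1(N)$ is a nontrivial free group, and a Van Kampen computation along $\partial \mathcal{B} = S^2$ gives $\pi_1(N) \cong \pi_1(B') \ast_{\pi_1(S^2)} \pi_1(\mathcal{B}) = \pi_1(B') \ast 1 = \pi_1(B')$; thus $\pi_1(B') \to \pi_1(N)$ is an isomorphism, not the trivial map (as $\pi_1(N) \neq 1$). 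Hence $B$ is the unique component with trivial induced map.

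The main obstacle I anticipate is the capping step and the bookkeeping that makes it rigorous: $\Omega$ meets $\partial N$ exactly in its boundary circles, so the caps $D_i \subset \partial N$ must be chosen compatibly (disjoint interiors, or properly nested, via an innermost-circle induction on the collection $\{\gamma_i\}$ in $\partial N$) so that $\Omega \cup (\bigcup D_i)$ is genuinely an embedded sphere and not merely an immersed one; one then perturbs the caps into $\mathrm{Int}\,N$ to make the whole sphere properly embedded in the interior before applying irreducibility. A secondary subtlety is confirming that $\mathcal{B} \cap \partial N$ is exactly the union of the cap discs (so that $B = \mathcal{B} \setminus (\text{caps})$ is honestly one of the two components of $N \setminus \Omega$), which again follows from the innermost arrangement. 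Everything else — the "exactly two components" count and the Van Kampen uniqueness argument — is then routine given the Poincar\'e Conjecture and $g>0$.
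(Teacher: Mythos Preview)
Your approach has a genuine gap: the component $B$ with trivial $\pi_1(B)\to\pi_1(N)$ need not be (homeomorphic to) a ball, so the step ``$B\cup(\text{caps})=\mathcal B$ is the $3$-ball bounded by the capped sphere'' can fail outright. Here is a concrete counterexample. Take $\Omega$ to be a boundary-parallel pair of pants: choose nested discs $D_1,D_2\subset D_0\subset\partial N$ with $D_1,D_2$ disjoint, let $\Omega'=D_0\setminus(\mathrm{Int}\,D_1\cup\mathrm{Int}\,D_2)\subset\partial N$, and let $\Omega$ be a parallel push-off of $\Omega'$ into $N$, so that there is a product region $\Omega\times[0,1]\subset N$ with $\Omega\times\{1\}=\Omega'$ and $\partial\Omega\times[0,1]\subset\partial N$. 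All hypotheses of the lemma are satisfied. One component of $N\setminus\Omega$ is $B:=\Omega\times(0,1]$; its closure has boundary $\Omega\cup\Omega'$, the double of a pair of pants, a closed genus-two surface --- not a sphere. Nevertheless $\pi_1(\overline B)\cong\pi_1(\Omega)$ is free on two generators, each homotopic to one of the $\gamma_i$, hence trivial in $\pi_1(N)$; so this $B$ is exactly the distinguished component the lemma predicts, but it is not a ball and no ``capping with the $D_i$'' produces an embedded sphere bounding it (the $D_i$ are nested, and the regions of $\partial N$ adjacent to $B$ are not discs). Your innermost-disc bookkeeping does not rescue this: even after pushing caps into the interior to get \emph{some} embedded sphere, that sphere and the ball it bounds have no clean relation to the components of $N\setminus\Omega$.

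The paper sidesteps this entirely. It caps $\Omega$ only to an \emph{immersed} sphere $\hat\Omega=\Omega\cup\bigl(\bigcup_iD_i\bigr)$, which suffices to show $\pi_1(\Omega)\to\pi_1(N)$ is trivial. Then it observes that the connected region $\partial N\setminus\bigcup_{\text{maximal}}D_i$ lies in one component $B_1$, so $\pi_1(B_1)\to\pi_1(N)$ is surjective. Van Kampen along $\Omega$ (with $\pi_1(\Omega)$ mapping trivially into $\pi_1(N)$) gives $\pi_1(N)\cong G_1\ast G_2$ where $G_i=\mathrm{im}\bigl(\pi_1(B_i)\to\pi_1(N)\bigr)$; Grushko's theorem on ranks then forces $\mathrm{rank}(G_2)=0$, i.e.\ $G_2=1$. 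This is a purely algebraic identification of $B$ that does not care whether $\overline B$ is a ball, and the example above shows that extra robustness is really needed.
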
 

We will show the lemma in Appendix A. 

 \begin{lemma}\label{sep} Let $(\Omega_{1}, \partial\Omega_{1})$ and $(\Omega_{2}, \partial\Omega_{2})$ be two disjoint surfaces as assumed in Lemma \ref{existence}. Assume that for each $t=1,2$, $N\setminus \Omega_{t}$ has a unique component $B_{t}$ with the property that the map $\pi_{1}(B_{t})\rightarrow \pi_{1}(N)$ is trivial. Then  one of the following holds:
\begin{enumerate}
\item $B_{1}\cap B_{2}=\emptyset$;
\item $B_{1}\subset B_{2}$;
\item$B_{2}\subset B_{1}$.
\end{enumerate} 
 \end{lemma}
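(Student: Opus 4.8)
### Proof proposal for Lemma \ref{sep}

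The plan is to argue directly from the defining property of the distinguished components $B_1$ and $B_2$, namely that $\pi_1(B_t) \to \pi_1(N)$ is trivial (equivalently, $B_t$ is homotopically trivial in $N$), together with the fact that each $N \setminus \Omega_t$ has exactly two components. First I would fix notation: write $N \setminus \Omega_1 = B_1 \amalg C_1$ and $N \setminus \Omega_2 = B_2 \amalg C_2$, where $C_t$ is the ``other'' component, the one into which $\pi_1$ does \emph{not} die. Since $\Omega_1$ and $\Omega_2$ are disjoint, $\Omega_2$ is connected-componentwise contained in one of $B_1$, $C_1$ (each component of $\Omega_2$ lies in a single component of $N \setminus \Omega_1$), and symmetrically for $\Omega_1$. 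I would split into cases according to where $\Omega_2$ sits relative to the decomposition $B_1 \amalg C_1$, and within each case track which of $B_2$, $C_2$ must coincide with, or be contained in, which piece.

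The key step is the following dichotomy: if a closed subsurface $\Omega_2$ as in Lemma \ref{existence} lies entirely inside the \emph{homotopically trivial} piece $B_1$, then one of the two components of $B_1 \setminus \Omega_2$ is homotopically trivial in $N$ and the rest of $B_1$ merges with $C_1$ through $\Omega_1$; conversely, if $\Omega_2 \subset C_1$ then $B_1$ lies in one component of $N \setminus \Omega_2$, and since $B_1$ is already homotopically trivial in $N$ and the distinguished component $B_2$ is characterized uniquely by triviality, $B_1$ must be contained in $B_2$ (using uniqueness in Lemma \ref{existence}). Concretely: (i) if $\Omega_2 \subset C_1$, then $B_1 \subset$ one component of $N\setminus\Omega_2$; that component is homotopically trivial iff it is $B_2$, and since $\pi_1(B_1)\to\pi_1(N)$ is already trivial while the non-trivial component $C_2$ of $N\setminus\Omega_2$ contains $\Omega_1$ and all of $C_1\setminus\Omega_2$, one checks $B_1\cap C_2=\emptyset$, forcing $B_1\subseteq B_2$, case (2). (ii) Symmetrically, if $\Omega_1 \subset C_2$, we get $B_2 \subseteq B_1$, case (3). (iii) The remaining possibility is $\Omega_2 \subset B_1$ and $\Omega_1 \subset B_2$; here I would show $B_1 \cap B_2 = \emptyset$ (case (1)) by observing that $B_2$, being homotopically trivial in $N$ and containing $\Omega_1$, must be the ``small side'' of $\Omega_1$, hence disjoint from $B_1$ — more carefully, $\Omega_1\subset B_2$ means $B_2$ meets both sides of $\Omega_1$, so $B_2 \supset$ one of $B_1,C_1$ up to the cut; triviality of $\pi_1(B_2)$ and of $\pi_1(B_1)$ combined with non-triviality of $\pi_1(C_1)$ pins down that $B_2 \cap B_1 = \emptyset$. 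I would use van Kampen throughout to compute $\pi_1$ of the reassembled pieces and the uniqueness clause of Lemma \ref{existence} to identify the trivial component.

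The main obstacle I anticipate is case (iii) and, more generally, making the bookkeeping rigorous: ``$\Omega_2 \subset B_1$'' allows several sub-configurations (e.g. $\Omega_2$ could separate $B_1$ into a trivial and a non-trivial part, and which part contains the frontier annuli coming from $\partial\Omega_1$ matters), so I would need to carefully apply Lemma \ref{existence} \emph{inside} $B_1$ viewed appropriately, or reduce to it. The cleanest route is probably to avoid case analysis on pictures and instead argue purely with fundamental groups: the components $B_1, C_1, B_2, C_2$ and the four mutual intersections $B_1\cap B_2$, $B_1\cap C_2$, $C_1\cap B_2$, $C_1\cap C_2$ partition $N \setminus (\Omega_1\cup\Omega_2)$ into finitely many open pieces; I would compute $\pi_1(N)$ as an iterated amalgam over these pieces (each $\Omega_t$ a disjoint union of planar separating surfaces) and use that at most one ``chain'' of pieces can carry the whole of $\pi_1(N)$, which is exactly the obstruction that prevents $B_1$ and $B_2$ from crossing transversally. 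Once the ambient-$\pi_1$ argument shows that $B_1 \cap B_2$, if nonempty, must be a union of pieces lying on the ``trivial side'' of both cuts, monotonicity (one contains the other) follows.
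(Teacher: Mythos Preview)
The paper gives no argument here beyond citing Lemma~7.2 of \cite{W1}, and your case-analysis strategy (where does $\Omega_2$ sit relative to $B_1\amalg C_1$?) is the natural one and almost certainly what that reference does. However, your execution of cases (i) and (iii) contains real errors. In case (i) you assert $B_1\cap C_2=\emptyset$, forcing $B_1\subset B_2$. This is false: take $N$ any handlebody of positive genus and let $\Omega_1,\Omega_2$ be two small disjoint discs, each with contractible boundary on $\partial N$, bounding disjoint small balls $B_1,B_2\subset N$; then $\Omega_2\subset C_1$ yet $B_1\subset C_2$. The correct statement in case (i) is simply that $B_1$, being connected and disjoint from $\Omega_2$, lies entirely in $B_2$ or entirely in $C_2$, yielding outcome (2) or outcome (1) respectively. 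No $\pi_1$ input is needed in this case, and both sub-outcomes genuinely occur.

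In case (iii) your conclusion $B_1\cap B_2=\emptyset$ contradicts your own observation that $\Omega_1\subset B_2$ forces the open set $B_2$ to meet both sides of $\Omega_1$, hence $B_2\cap B_1\neq\emptyset$. What actually happens is that case (iii) is \emph{impossible}. From $\Omega_2\subset B_1$ the connected set $C_1$ lies in $B_2$ or in $C_2$; the former is ruled out because $\pi_1(C_1)\to\pi_1(N)$ is surjective (this is established in the proof of Lemma~\ref{existence}) while $\pi_1(B_2)\to\pi_1(N)$ is trivial, so $C_1\subset C_2$. Symmetrically, $\Omega_1\subset B_2$ gives $C_2\subset C_1$. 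Hence $C_1=C_2$, so their frontiers in $\operatorname{Int}N$ coincide, i.e.\ $\Omega_1=\Omega_2$, contradicting disjointness. With these two corrections the remaining case $\Omega_2\subset B_1$, $\Omega_1\subset C_2$ gives $C_1\subset C_2$ and $B_2\cap\Omega_1=\emptyset$, hence $B_2\subset B_1$, outcome (3). The proof is then a clean short case split, and your van~Kampen fallback is unnecessary.
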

The proof is the same as the proof of  Lemma 7.2 in \cite{W1}.

\subsection{Definition of the set $S$} 

Let $(M, g)$, $\{N_k\}_k$ and $\mathscr{L}:=\amalg_{t\in \Lambda} L_t$ be assumed as in the beginning of Section 7. As in Section 7, there is a family of handlebodies $\{R_k\}_k$ satisfying that 

\vspace{2mm}

\noindent \emph{there is a positive integer $k_{0}$ so that for each $k\geq k_{0}$ and each $t\in \Lambda$, each circle in $L_{t}\cap \partial R_{k}$ is homotopically trivial in $\partial R_{k}$}. 

\vspace{2mm}

In the following, we will work on the open handlebody $\text{Int}~R_{k}$ and construct the set $S$, for a fixed integer $k\geq k_{0}$.

\subsubsection{The element in $S$} Let $\{\Sigma^{t}_{i}\}_{i\in I_{t}}$ be the set of  components of $L_{t}\cap \text{Int} ~R_{k}$ for each $t\in\Lambda$. (It may be empty.)  We will show that for each component $\Sigma^{t}_{i}$, $R_{k}\setminus \overline{\Sigma^{t}_{i}}$ has a unique component $B^{t}_{i}$ satisfying that $\pi_{1}(B^{t}_{i})\rightarrow \pi_{1}(R_{k})$ is trivial. 

\vspace{2mm}

If $L_{t}$ intersects $\p R_{k}$ transversally,  the boundary $\p \Sigma^{t}_{i}\subset L_{t}\cap \p R_{k}$ is the union of some disjointly embedded circles. From the Vanishing property, any circles in the boundary $\partial \Sigma^{t}_{i}\subset L_{t}\cap \p R_{k}$ is homotopically trivial in $\p R_{k}$. 

In  addition, since $L_{t}$ is homeomorphic to $\mathbb{R}^{2}$, $\Sigma^{t}_{i}$ is homeomorphic to an open disc with finite punctures. By Lemma \ref{existence}, $R_{k}\setminus \overline{\Sigma^{t}_{i}}$ has a unique component $B^{t}_{i}$ satisfying that $\pi_{1}(B^{t}_{i})\rightarrow \pi_{1}(R_{k})$ is trivial.

\vspace{2mm}

In general, $L_{t}$ may not intersect $\p R_{k}$ transversally. To overcome it, we will deform the surface $\p R_{k}$.  Precisely, for the leaf $L_{t}$, there is a new handlebody $\tilde{R}_{k}[\epsilon_{t}]$ containing $R_{k}$ so that $L_{t}$ intersects $\p \tilde{R}[\epsilon_{t}]$ transversally, where $\tilde{R}_{k}[\epsilon_{t}]$ is a closed tubular neighborhood of $ R_{k}$ in $M$.   

We consider the component $\tilde{\Sigma}^{t}_{i}$ of $L_{t}\cap \text{Int} \tilde{R}_{k}[\epsilon_{t}]$ containing $\Sigma^{t}_{i}$. As above,  $\tilde{R}_{k}[\epsilon_{t}]\setminus \overline{\tilde{\Sigma}^{t}_{i}}$ has a unique component $\tilde{B}^{t}_{i}$ so that the map $\pi_{1}(\tilde{B}^{t}_{i})\rightarrow \pi_{1}(\tilde{R}_{k}[\epsilon_{t}])$ is trivial. 

Choose the component $B^{t}_{i}$ of $\tilde{B}^{t}_{i}\cap R_{k}$ whose boundary contains $\Sigma^{t}_{i}$. It is a component of $R_{k}\setminus \overline{\Sigma^{t}_{i}}$.  In addition, the map $\pi_{1}(B^{t}_{i})\rightarrow \pi_{1}(\tilde{B}^{t}_{i})\rightarrow \pi_{1}(\tilde{R}_{k}[\epsilon_{t}])$ is trivial. Since $R_{k}$ and $\tilde{R}_{k}[\epsilon_{t}]$ are homotopy equivalent, the map $\pi_{1}(B^{t}_{i})\rightarrow \pi_{1}({R}_{k})$ is also trivial.  This finishes the construction of $B^{t}_{i}$. 

\vspace{2mm}

\subsubsection{The properties of $S$} From Lemma \ref{sep}, for any $B^{t}_{i}$ and $B^{t'}_{i'}$, it holds one of the following 
 \begin{center}
(1) $B^{t}_{i}\cap B^{t'}_{i'}=\emptyset$; \quad 
(2) $B^{t}_{i}\subset B^{t'}_{i'}$; \quad 
 (3) $B^{t'}_{i'}\subset B^{t}_{i}$,

\end{center}where $t, t'\in \Lambda$, $i\in I_{t}$ and $i'\in I_{t'}$. Therefore, $(\{B^{t}_{i}\}_{t\in \Lambda, i\in I_{t}}, \subset)$ is a partially ordered set. Any set can be partially ordered by inclusion regardless of whether the items (1), (2) and (3) hold. 
  
  We now  consider  the set $\{B_{j}\}_{j\in J}$ of maximal elements. However this set  may be infinite.

\begin{definition}
  $S:=\{B_{j}| B_{j}\cap R_{k}(\epsilon/2)\neq \emptyset$, for any  $j\in J\},$ where $R_{k}(\epsilon/2)$ is $R_{k}\setminus N_{\epsilon/2}(\partial R_{k})$ and $N_{\epsilon/2}(\partial R_{k})$ is a 2-sided tubular neighborhood of $\partial R_{k}$ with radius $\epsilon/2$.
\end{definition}

\begin{proposition}\label{intersect} Let $\Sigma^{t}_{i}$ be one component of $L_{t}\cap \text{Int}~R_{k}$ and $B^{t}_{i}$ assumed as above. If $B^{t}_{i}$ is an element in $S$, then $\Sigma^{t}_{i}\cap R_{k}(\epsilon/2)$ is nonempty. \end{proposition}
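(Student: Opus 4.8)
\textbf{Proof plan for Proposition \ref{intersect}.}
The plan is to argue by contradiction: suppose $B^{t}_{i}\in S$ but $\Sigma^{t}_{i}\cap R_{k}(\epsilon/2)=\emptyset$. By definition of $S$ there is some point $p\in B^{t}_{i}\cap R_{k}(\epsilon/2)$. First I would note that $\Sigma^{t}_{i}$, being a component of $L_{t}\cap\operatorname{Int}R_{k}$, is a properly embedded (relatively closed) subsurface of $\operatorname{Int}R_{k}$ whose frontier lies in $\partial R_{k}$ (or, after the deformation of Remark \ref{general}, in the thin collar $N_{\epsilon}(\partial R_{k})$); the hypothesis $\Sigma^{t}_{i}\cap R_{k}(\epsilon/2)=\emptyset$ says $\Sigma^{t}_{i}$ is contained in the collar neighborhood $N_{\epsilon/2}(\partial R_{k})$, i.e. it stays within $\epsilon/2$ of $\partial R_{k}$.

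The key step is then a connectivity argument inside $R_{k}(\epsilon/2)$. Since $\Sigma^{t}_{i}$ does not meet the compact sub-handlebody $R_{k}(\epsilon/2)$, that sub-handlebody lies entirely in $R_{k}\setminus\overline{\Sigma^{t}_{i}}$, hence in one of its components. I would show this component must be $B^{t}_{i}$: indeed $R_{k}(\epsilon/2)$ is connected and contains the point $p\in B^{t}_{i}$, so $R_{k}(\epsilon/2)\subset B^{t}_{i}$. In particular the inclusion $R_{k}(\epsilon/2)\hookrightarrow B^{t}_{i}$ is well defined, and composing with $B^{t}_{i}\hookrightarrow R_{k}$ it factors through the map $\pi_{1}(B^{t}_{i})\to\pi_{1}(R_{k})$, which is trivial by construction of $B^{t}_{i}$. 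But $R_{k}(\epsilon/2)$ is a deformation retract of $R_{k}$ (removing the exterior collar $N_{\epsilon/2}(\partial R_{k})\cong\partial R_{k}\times[0,\epsilon/2)$ does not change homotopy type), so $\pi_{1}(R_{k}(\epsilon/2))\to\pi_{1}(R_{k})$ is an isomorphism. This forces $\pi_{1}(R_{k})$ to be trivial, i.e. $R_{k}$ is a $3$-ball — contradicting property (3) of Property (H), which says $R_{k}$ is not contained in a $3$-ball in $M$ (in particular $g(R_{k})>0$, so $\pi_{1}(R_{k})$ is a nontrivial free group).

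I expect the main obstacle to be bookkeeping around the transversality deformation: $L_{t}$ need not meet $\partial R_{k}$ transversally, so strictly speaking one works with $\tilde{R}_{k}(\epsilon_{t})$ and $\tilde\Sigma^{t}_{i}$, and must check that ``$\Sigma^{t}_{i}\cap R_{k}(\epsilon/2)=\emptyset$'' together with the nesting relations still lets $R_{k}(\epsilon/2)$ sit inside the correct component $B^{t}_{i}$ and that the factorization of $\pi_{1}$ through the trivial map is genuine. One also needs $\epsilon$ small enough that $R_{k}(\epsilon/2)$ really is a deformation retract of $R_{k}$ and that the deformed handlebodies interpolate correctly between $R_{k}(\epsilon/2)$ and $R_{k}$; this is where I would be careful, but it is routine collar-neighborhood topology once the statement is set up. The conceptual content is entirely the dichotomy: a component of $L_t\cap\operatorname{Int}R_k$ that misses the ``core'' $R_k(\epsilon/2)$ cannot have its complementary $\pi_1$-trivial side $B^t_i$ meet that core without trivializing $\pi_1(R_k)$.
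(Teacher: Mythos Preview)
Your argument is correct and is precisely the approach the paper intends: the paper does not reprove this statement but cites Proposition~7.4 of \cite{W1}, whose proof is exactly the connectivity/$\pi_1$-factorization argument you outline (if $\Sigma^{t}_{i}$ misses $R_{k}(\epsilon/2)$ then the connected core $R_{k}(\epsilon/2)$ lies in $B^{t}_{i}$, forcing the isomorphism $\pi_{1}(R_{k}(\epsilon/2))\to\pi_{1}(R_{k})$ to factor through the trivial map, contradicting $g(R_{k})>0$). Your caution about the transversality bookkeeping is well placed but, as you say, routine once one notes that $\overline{\Sigma^{t}_{i}}\setminus\Sigma^{t}_{i}\subset\partial R_{k}$ by properness of $L_{t}$, so $\overline{\Sigma^{t}_{i}}\cap R_{k}(\epsilon/2)=\emptyset$ follows immediately from the hypothesis.
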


The proof is the same as Proposition 6.5 of \cite{W1}.

 \begin{proposition}\label{cover}
 $R_{k}(\epsilon)\cap \mathscr{L}\subset \bigcup_{{B}_{j} \in S} \overline{B}_{j}\cap R_{k}(\epsilon)$. Moreover, $\partial R_{k}(\epsilon)\cap \mathscr{L}\subset \bigcup_{{B}_{j}\in S} \overline{B}_{j}\cap \partial R_{k}(\epsilon)$.
 \end{proposition}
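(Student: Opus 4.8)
\textbf{Proof proposal for Proposition \ref{cover}.}

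The plan is to argue that every point of $\mathscr{L}$ lying in the slightly shrunk handlebody $R_{k}(\epsilon)$ must sit inside the closure $\overline{B}_{j}$ of some \emph{maximal} block $B_{j}$, and moreover that this $B_{j}$ belongs to the restricted collection $S$. First I would fix a point $p\in R_{k}(\epsilon)\cap\mathscr{L}$, so $p$ lies on some leaf $L_{t}$, hence on some component $\Sigma^{t}_{i}$ of $L_{t}\cap\text{Int}~R_{k}$ (the interior here is fine since $R_{k}(\epsilon)\subset\text{Int}~R_{k}$). To this component the preceding subsection attaches the canonical component $B^{t}_{i}$ of $R_{k}\setminus\overline{\Sigma^{t}_{i}}$ with $\pi_{1}(B^{t}_{i})\to\pi_{1}(R_{k})$ trivial. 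The first small point to record is that $p\in\overline{B^{t}_{i}}$: indeed $p\in\Sigma^{t}_{i}\subset\partial B^{t}_{i}$ by construction, so $p$ lies in the closure. By the maximality discussion, $B^{t}_{i}$ is contained in some maximal element $B_{j}$ of $(\{B^{t}_{i}\},\subset)$, whence $p\in\overline{B^{t}_{i}}\subset\overline{B}_{j}$.

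Next I would verify that this $B_{j}$ lies in $S$, i.e. that $B_{j}\cap R_{k}(\epsilon/2)\neq\emptyset$. Since $p\in R_{k}(\epsilon)\subset R_{k}(\epsilon/2)$, it suffices to know $p\in\overline{B_{j}\cap R_{k}(\epsilon/2)}$ and then perturb slightly, or more cleanly to invoke Proposition \ref{intersect}: writing $B_{j}=B^{t'}_{i'}$ for the component attached to some $\Sigma^{t'}_{i'}$ with $\Sigma^{t}_{i}\subset\overline{\Sigma^{t'}_{i'}}$ (this is how the ordering on the $B$'s is inherited from inclusions of the $\Sigma$'s), I want to deduce $\Sigma^{t'}_{i'}\cap R_{k}(\epsilon/2)\neq\emptyset$, which by Proposition \ref{intersect} applied to $B_{j}$ is equivalent to $B_{j}\in S$. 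The cleanest route is: $p\in\Sigma^{t}_{i}\cap R_{k}(\epsilon)\subset \overline{\Sigma^{t'}_{i'}}\cap R_{k}(\epsilon/2)$; since $R_{k}(\epsilon/2)$ is open and $p$ is a limit of points of $\Sigma^{t'}_{i'}$, we get $\Sigma^{t'}_{i'}\cap R_{k}(\epsilon/2)\neq\emptyset$, so $B_{j}\in S$. This establishes the first inclusion $R_{k}(\epsilon)\cap\mathscr{L}\subset\bigcup_{B_{j}\in S}\overline{B}_{j}\cap R_{k}(\epsilon)$.

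For the ``moreover'' statement I would simply intersect both sides with $\partial R_{k}(\epsilon)$: if $q\in\partial R_{k}(\epsilon)\cap\mathscr{L}$, then in particular $q\in R_{k}(\epsilon)\cap\mathscr{L}$ (the shrunk handlebody $R_{k}(\epsilon)$ is taken closed, so it contains its boundary $\partial R_{k}(\epsilon)$), hence $q\in\overline{B}_{j}$ for some $B_{j}\in S$ by the first part, hence $q\in\overline{B}_{j}\cap\partial R_{k}(\epsilon)$. The main obstacle I anticipate is purely bookkeeping rather than conceptual: one must be careful that the component $\Sigma^{t'}_{i'}$ realizing the maximal block $B_{j}=B^{t'}_{i'}$ really does satisfy $\Sigma^{t}_{i}\subset\overline{\Sigma^{t'}_{i'}}$ — i.e.\ that the partial order on the $B^{t}_{i}$'s is compatible with the "containment up to closure" relation among the associated components $\Sigma^{t}_{i}$ — and that passing to the deformed handlebody $\tilde R_{k}(\epsilon_{t})$ (used when $L_{t}$ is not transverse to $\partial R_{k}$) does not disturb any of this, exactly as in the parallel argument of Section 7 of \cite{W1}. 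Modulo that, the proposition follows by combining Lemma \ref{sep}, the construction of $B^{t}_{i}$, and Proposition \ref{intersect}.
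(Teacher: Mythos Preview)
Your overall strategy is correct and matches the approach the paper defers to (Proposition 7.5 of \cite{W1}): a point $p\in R_{k}(\epsilon)\cap\mathscr{L}$ lies on some $\Sigma^{t}_{i}\subset\overline{B^{t}_{i}}$, hence in $\overline{B}_{j}$ for a maximal $B_{j}\supset B^{t}_{i}$, and then one checks $B_{j}\in S$. The ``moreover'' clause indeed follows by intersecting with $\partial R_{k}(\epsilon)$.

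However, your ``cleanest route'' for the step $B_{j}\in S$ contains a concrete error. The claimed inclusion $\Sigma^{t}_{i}\subset\overline{\Sigma^{t'}_{i'}}$ is false in general: the partial order is defined on the $B$'s, not on the $\Sigma$'s, and $B^{t}_{i}\subset B_{j}=B^{t'}_{i'}$ only yields $\Sigma^{t}_{i}\subset\overline{B}_{j}$. The components $\Sigma^{t}_{i}$ and $\Sigma^{t'}_{i'}$ are disjoint pieces of the lamination (possibly on different leaves), and neither need lie in the closure of the other. Moreover, Proposition \ref{intersect} gives only the implication $B_{j}\in S\Rightarrow\Sigma^{t'}_{i'}\cap R_{k}(\epsilon/2)\neq\emptyset$, not the equivalence you invoke. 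Drop this detour entirely: your first, direct argument already works once phrased cleanly. Since $p\in\overline{B}_{j}$ and $p\in R_{k}(\epsilon)\subset\text{Int}\,R_{k}(\epsilon/2)$, any small ball around $p$ lies in $R_{k}(\epsilon/2)$ and meets $B_{j}$; hence $B_{j}\cap R_{k}(\epsilon/2)\neq\emptyset$, i.e.\ $B_{j}\in S$ by definition. No appeal to Proposition \ref{intersect} or to containments among the $\Sigma$'s is needed here.
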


 The proof is the same as Proposition 6.6 of \cite{W1}.

\subsection{The finiteness of the set $S$} The set $\partial B_{j}\cap \text{Int}~R_{k}$ equals some $\Sigma^{t}_{i}\subset L_{t}$ for $t \in \Lambda$.
Let us consider the set $S_{t}:=\{B_{j}\in S| \partial B_{j}\cap \text{Int} ~R_{k}\subset L_{t} \}$. Then, $S=\amalg_{t\in \Lambda} S_{t}$. Note that each $B_{j}\in S_{t}$ is a $B^{t}_{i}$ for some $i\in I_{t}$.\par
In this subsection, we first show that each $S_{t}$ is finite. Then, we argue that $\{S_{t}\}_{t\in\Lambda}$ contains at most finitely many nonempty sets.
These imply the finiteness of $S$. 
 
\begin{lemma}\label{finite1} Each $S_{t}$ is finite.\end{lemma}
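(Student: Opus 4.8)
The plan is to fix the leaf $L_{t}$ and show directly that the set $S_{t}$ cannot be infinite, by using the area estimate for stable minimal surfaces exactly as in Lemma \ref{van1}. Recall that each $B_{j}\in S_{t}$ has $\partial B_{j}\cap \mathrm{Int}\,R_{k}$ equal to some component $\Sigma^{t}_{i}$ of $L_{t}\cap \mathrm{Int}\,R_{k}$, and that by Proposition \ref{intersect} this component meets $R_{k}(\epsilon/2)$. Since $L_{t}$ is a properly embedded plane meeting $R_{k}$, it in fact meets $R_{0}$; more to the point, the ``deep'' part $L_{t}\cap R_{k}(\epsilon/2)$ sits inside the compact region $R_{k}$, and I claim each such $\Sigma^{t}_{i}$ (for $B^{t}_{i}\in S_{t}$) carries a definite amount of area.

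First I would set up the area lower bound. Fix, for each $B^{t}_{i}\in S_{t}$, a point $x_{i}\in \Sigma^{t}_{i}\cap R_{k}(\epsilon/2)$ given by Proposition \ref{intersect}. Choose $r_{0}=\tfrac12\min\{i_{0},\epsilon/2\}$, where $i_{0}:=\inf_{x\in R_{k}}\mathrm{Inj}_{M}(x)$, so that the geodesic ball $B(x_{i},r_{0})$ in $M$ is contained in $R_{k}$ and does not meet $\partial R_{k}$; in particular the portion of $L_{t}$ inside $B(x_{i},r_{0})$ is a minimal surface with (relative) boundary only on $\partial R_{k}$, hence the piece through $x_{i}$ is a genuine minimal surface with no boundary in the interior of the ball. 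Applying the monotonicity-type estimate [Lemma 1, Page 445] of \cite{YM} — exactly as in the proof of Lemma \ref{van1} — to the minimal surface $L_{t}\cap B(x_{i},r_{0})$ gives a uniform lower bound $\mathrm{Area}(L_{t}\cap B(x_{i},r_{0}))\geq C_{1}(K,i_{0},r_{0})>0$ depending only on geometric data of $R_{k}$, not on $i$.

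Next I would harvest disjointness. The crucial point is that distinct maximal elements $B_{j}, B_{j'}\in S$ are disjoint (they are distinct maximal elements of the partial order, and Lemma \ref{sep} forces two comparable-or-disjoint alternatives, so maximal elements that are not equal are disjoint). Since $\Sigma^{t}_{i}=\partial B^{t}_{i}\cap\mathrm{Int}\,R_{k}$ and $x_{i}\in \Sigma^{t}_{i}$, the balls $B(x_{i},r_{0})$ may not be disjoint, so I would instead argue on the surface: the points $x_{i}$ lie on distinct components $\Sigma^{t}_{i}$ of $L_{t}\cap\mathrm{Int}\,R_{k}$, which are pairwise disjoint subsets of the plane $L_{t}$; if two balls $B(x_{i},r_{0})$ and $B(x_{i'},r_{0})$ both captured area on $L_{t}$ near the same place, this would still be area on $L_{t}$, and the total is controlled. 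Concretely, grouping the $x_i$ into boundedly many families within which the balls $B(x_i, 2r_0)$ are pairwise disjoint (a packing argument in $R_k$, using a uniform volume lower bound for $r_0$-balls), one gets
\[
\infty = \#S_{t}\cdot C_{1} \;\leq\; (\text{const})\cdot\int_{R_{k}\cap L_{t}}\kappa\,dv \;\leq\; (\text{const})\cdot 2\pi,
\]
where the last inequality is Theorem \ref{proper} (with $\kappa$ bounded below on the compact set $R_k$), a contradiction. Hence $S_{t}$ is finite.

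The main obstacle is the bookkeeping in the middle step: the balls $B(x_{i},r_{0})$ around the chosen points of different components need not be disjoint, so I cannot naively sum their areas. The clean fix is the standard Vitali/packing trick — since $R_{k}$ is compact with bounded geometry, any collection of $r_{0}$-balls in $R_{k}$ can be partitioned into at most $N=N(R_{k},r_{0})$ subcollections, each consisting of pairwise disjoint balls — so an infinite $S_{t}$ still yields infinitely many disjoint balls in one subcollection, each contributing area $\geq C_{1}$ to $\int_{R_{k}\cap L_{t}}\kappa\,dv$, contradicting Theorem \ref{proper}. I expect this to mirror the argument of Lemma 7.6 in \cite{W1} essentially verbatim, so the write-up should be short once the area lower bound and the disjointness of maximal elements are in hand.
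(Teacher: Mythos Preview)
Your overall strategy matches the paper's exactly: pick a point $x_{i}\in\Sigma^{t}_{i}\cap R_{k}(\epsilon/2)$ via Proposition \ref{intersect}, invoke the monotonicity estimate of \cite{YM} to get a uniform area lower bound, and contradict Theorem \ref{proper}. However, your ``packing detour'' is both unnecessary and, as stated, incorrect. The claim that the balls $B(x_{i},r_{0})$ can be partitioned into boundedly many pairwise-disjoint families fails in general: if infinitely many of the $x_{i}$ accumulate at a single point of $R_{k}(\epsilon/2)$ (nothing prevents this), then all of the corresponding balls overlap pairwise, and no finite partition into disjoint subfamilies exists. A volume lower bound for $r_{0}$-balls only bounds the number of \emph{disjoint} balls, not the multiplicity of an arbitrary collection.

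The paper sidesteps this entirely, using the observation you yourself make but then abandon: the surfaces $\Sigma^{t}_{i}$ are distinct connected components of $L_{t}\cap\mathrm{Int}\,R_{k}$, hence pairwise disjoint in $L_{t}$. So apply the monotonicity lemma to the minimal surface $(\Sigma^{t}_{i},\partial\Sigma^{t}_{i})\subset(R_{k},\partial R_{k})$ rather than to all of $L_{t}$, obtaining $\mathrm{Area}\bigl(\Sigma^{t}_{i}\cap B(x_{i},r_{0})\bigr)\geq C_{1}$. Since $\Sigma^{t}_{i}\cap B(x_{i},r_{0})\subset\Sigma^{t}_{i}$ and the $\Sigma^{t}_{i}$ are disjoint, one sums directly:
\[
2\pi\ \geq\ \int_{L_{t}}\kappa\,dv\ \geq\ \sum_{B_{j}\in S_{t}}\int_{\Sigma^{t}_{i}}\kappa\,dv\ \geq\ \inf_{R_{k}}\kappa\cdot C_{1}\cdot|S_{t}|,
\]
with no packing needed.
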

\begin{proof}

We argue by contradiction. Suppose that $S_{t}$ is infinite for some $t$. 

For each $B_{j}\in S_{t}$, there exists  a $i\in I_{t}$ so that $B_{j}$ is equal to $B^{t}_{i}$, where $B^{t}_{i}$ is a component of $R_{k}\setminus \overline{\Sigma^{t}_{i}}$ and $\Sigma^{t}_{i}$ is one component of $L_{t}\cap \text{Int}~R_{k}$. By Proposition \ref{intersect}, $\Sigma^{t}_{i}\cap R_{k}(\epsilon/2)$ is nonempty. 
 
Choose $x_{j}\in\Sigma^{t}_{i}\cap R_{k}(\epsilon/2)$ and $r_{0}=\frac{1}{2}\min\{\epsilon/2, i_{0}\}$, where $i_{0}:=\inf_{x\in R_{k}} \text{Inj}_{M}(x)$.
Then the geodesic ball $B(x_{j}, r_{0})$ in $M$ is contained in $R_{k}$.

We apply [Lemma 1, Page 445] in \cite{YM} to the minimal surface $(\Sigma^{t}_{i},\partial \Sigma^{t}_{i})\subset (R_{k}, \partial R_{k})$.  One knows that, 
$$\text{Area}( \Sigma^{t}_{i}\cap B(x_{j}, r_{0}))\geq C(r_{0}, i_{0}, K)$$
where $K=\sup_{x\in R_{k}}|K_{M}|$.  This leads to a contradiction from Theorem \ref{proper} as below:
\begin{equation*}
\begin{split}
2\pi\geq \int_{L_{t}} \kappa(x)dv
       & \geq\sum_{B_{j}\in S_{t}} \int_{\Sigma^{t}_{i}} \kappa(x) dv\geq \sum_{B_{j}\in S_{t}} \int_{\Sigma^{t}_{i}\cap B(x_{j}, r_{0})} \kappa(x) dv\\
      &\geq \inf_{x\in R_{k}}(\kappa(x))\sum_{B_{j}\in S_{t}} \text{Area}(B(x_{j}, r_{0})\cap \Sigma^{t}_{i})\\
      &\geq C\inf_{x\in R_{k}}(\kappa(x)) |S_{t}|=\infty
\end{split}
\end{equation*}
This finishes the proof.
\end{proof}

\begin{lemma}\label{finite2} $\{S_{t}\}_{t\in \Lambda}$ contains at most finitely many nonempty sets.
\end{lemma}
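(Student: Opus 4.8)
The plan is to argue by contradiction, as in Lemma \ref{finite1}, but extracting a convergent sequence of \emph{leaves} rather than of components inside a single leaf. Suppose $\{S_{t}\}_{t\in\Lambda}$ has infinitely many nonempty members. Since distinct indices correspond to distinct leaves, we obtain distinct leaves $\{L_{t_{n}}\}_{n}$ and, for each $n$, an element $B_{n}:=B^{t_{n}}_{i_{n}}\in S_{t_{n}}\subset S$ with defining component $\Sigma_{n}:=\Sigma^{t_{n}}_{i_{n}}$ of $L_{t_{n}}\cap \text{Int}~R_{k}$. Because $B_{n}\in S$, Proposition \ref{intersect} provides a point $x_{n}\in \Sigma_{n}\cap R_{k}(\epsilon/2)$. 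Since $R_{k}(\epsilon/2)$ is compact, after passing to a subsequence we may assume $x_{n}\to x_{\infty}\in R_{k}(\epsilon/2)$.

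Next I would pass to a smoothly convergent subsequence of the leaves themselves. Each $L_{t_{n}}$ is a complete stable minimal surface, so by [Theorem 3, Page 122] of \cite{Schoen} it has uniformly bounded second fundamental form on every fixed compact set, and by Theorem \ref{proper} it satisfies $\int_{L_{t_{n}}}\kappa\,dv\leq 2\pi$, hence has uniformly bounded area on compact sets. Theorem \ref{convergence} then yields a subsequence converging smoothly, with finite multiplicity, to a properly embedded minimal surface contained in the closed set $\mathscr{L}$; each of its components is therefore a leaf of $\mathscr{L}$. Let $L_{t_{\infty}}$ be the leaf containing $x_{\infty}$, and let $\Sigma_{\infty}$ be the component of $L_{t_{\infty}}\cap \text{Int}~R_{k}$ through $x_{\infty}$; its closure is a compact planar surface. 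By the finite-multiplicity smooth convergence, for $n$ large each $L_{t_{n}}$ is, near $\overline{\Sigma_{\infty}}$, a disjoint union of normal graphs over $\Sigma_{\infty}$, and since $x_{n}\to x_{\infty}$ and $\Sigma_{n}$ is connected, the component $\Sigma_{n}$ is $C^{2}$-close to $\Sigma_{\infty}$ for $n$ large.

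The contradiction I would extract is with the maximality built into the definition of $S$. Fix two large indices $n\neq n'$. The surfaces $\Sigma_{n}$ and $\Sigma_{n'}$ are disjoint, because they lie in distinct leaves of $\mathscr{L}$, and both are $C^{2}$-close to $\Sigma_{\infty}$; viewing both as normal graphs over $\Sigma_{\infty}$, one of them, say $\Sigma_{n}$, lies on the positive normal side of the other, $\Sigma_{n'}$. By Lemma \ref{existence}, $\overline{\Sigma_{n'}}$ separates $R_{k}$ into $B_{n'}$ and its complement, and the side on which $\pi_{1}\to\pi_{1}(R_{k})$ is trivial is a topological invariant unchanged by the $C^{2}$-small perturbation carrying $\Sigma_{n'}$ to $\Sigma_{n}$; hence $B_{n}$ and $B_{n'}$ lie on the same normal side of $\Sigma_{\infty}$. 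Going through the two possibilities for that side (and comparing with Lemma \ref{sep}), one finds that either $B_{n}\subsetneq B_{n'}$ or $B_{n'}\subsetneq B_{n}$. In either case one of these open sets is strictly contained in another element of the poset $(\{B^{t}_{i}\}_{t,i},\subset)$, contradicting that $B_{n}$ and $B_{n'}$ are both maximal elements of that poset --- which was required for their membership in $S$. Hence at most finitely many $S_{t}$ are nonempty, and, combined with Lemma \ref{finite1}, the set $S$ is finite.

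The hard part is the passage to the limit together with the side comparison: one must verify that \emph{all} of the component $\Sigma_{n}$ (not merely a small plaque near $x_{\infty}$) converges to the single component $\Sigma_{\infty}$, handling the case where the multiplicity of convergence exceeds one, and then check that the ``trivial $\pi_{1}$-side'' $B_{n}$ is stable under the $C^{2}$-small perturbation so that the nesting of $B_{n}$ and $B_{n'}$ genuinely holds. The topological separation and comparison facts needed are exactly Lemma \ref{existence} and Lemma \ref{sep}, while the analytic ingredients --- Schoen's curvature estimate, the total-curvature bound of Theorem \ref{proper}, and the compactness Theorem \ref{convergence} --- are the same ones already used in Lemma \ref{finite1}.
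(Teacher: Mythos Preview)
Your overall architecture matches the paper's proof: argue by contradiction, pick $B_{n}\in S_{t_{n}}$ with defining component $\Sigma_{n}\subset L_{t_{n}}$ and a point $x_{n}\in\Sigma_{n}\cap R_{k}(\epsilon/2)$, then use Schoen's curvature estimate together with the area bound from Theorem \ref{proper} to apply Theorem \ref{convergence} and extract a subsequence of leaves converging with finite multiplicity to a leaf $L_{t_{\infty}}\subset\mathscr{L}$ through $x_{\infty}$. That part is exactly right.

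Where you diverge from the paper is in the endgame, and your version is harder than necessary and leaves a real gap. You try to deduce a \emph{global} strict nesting $B_{n}\subsetneq B_{n'}$ by arguing that the ``trivial $\pi_{1}$-side'' is stable under $C^{2}$-small perturbation. This requires two things you explicitly flag but do not prove: that \emph{all} of $\Sigma_{n}$ (not just a plaque near $x_{\infty}$) is $C^{2}$-close to $\Sigma_{\infty}$, and that the side selected by Lemma \ref{existence} is preserved under such perturbations. Neither is immediate, and the second in particular would need a careful argument since $B_{n}$ is a global component of $R_{k}\setminus\overline{\Sigma_{n}}$, not just a local half-space.

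The paper sidesteps both issues. First, it enlarges $\Sigma_{\infty}$ to a simply-connected compact set $D\subset L_{t_{\infty}}$ and uses Remark \ref{graph} to conclude that each component of $L_{t_{n}}\cap D(\epsilon_{1})$ projects injectively to $D$; since $\Sigma_{n}$ is one such component, $\Sigma_{n}$ meets a small tubular neighborhood $U=\pi^{-1}(B^{\Sigma_{\infty}}(p_{\infty}))$ in a \emph{single} plaque. Second, and this is the key simplification, the paper does not try to determine which side $B_{n}$ lies on. Working in a lamination chart $\Phi:U\to\mathbb{R}^{3}$, the plaque of $\Sigma_{n}$ is a level $\{x_{3}=x_{t_{n}}\}$ with $x_{t_{n}}\to x_{\infty}$, and $\Phi(B_{n}\cap U)$ is one of the two half-charts $\{x_{3}>x_{t_{n}}\}$ or $\{x_{3}<x_{t_{n}}\}$. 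Whichever it is, it contains the plaque $\{x_{3}=x_{t_{n'}}\}$ for some other $n'$, hence $B_{n}\cap B_{n'}\neq\emptyset$. Since distinct maximal elements of $(\{B^{t}_{i}\},\subset)$ must be disjoint by Lemma \ref{sep}, this is already a contradiction. No side-stability statement is needed, and the argument is purely local near $p_{\infty}$.
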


\begin{proof}We argue by contradiction. Suppose that there exists a sequence $\{S_{t_{n}}\}_{n\in\mathbb{N}}$ of nonempty sets.
For an element $B_{j_{t_{n}}}\in S_{t_{n}}$, there is some $i_{n}\in I_{t_{n}}$ so that $B_{j_{t_n}}$ equals $B^{t_{n}}_{i_{n}}$ where $B^{t_{n}}_{i_{n}}$ is one component of $R_{k}\setminus \overline{\Sigma^{t_{n}}_{i_{n}}}$ and $\Sigma^{t_{n}}_{i_{n}}$ is one of components of $L_{t_{n}}\cap \text{Int}~R_{k}$. Note that $\pi_{1}(B^{t_{n}}_{i_{n}})\rightarrow \pi_{1}(R_{k})$ is trivial. 

By Proposition \ref{intersect}, $\Sigma^{t_{n}}_{i_{n}}\cap R_{k}(\epsilon/2)$ is not empty. Pick a point $p_{t_{n}}$ in $\Sigma^{t_{n}}_{i_{n}}\cap R_{k}(\epsilon/2)$.

\vspace{2mm}
\noindent\emph{\textbf{Step1}: $\{L_{t_{n}}\}$ subconverges to a lamination $\mathscr{L}'\subset \mathscr{L}$ with finite multiplicity.}\par
\vspace{2mm}

Since $L_{t_{n}}$ is a stable minimal surface, by [Theorem 3, Page122] in \cite{Schoen}, for any compact set $K\subset M$, there is a constant $C_{1}:=C_{1}(K, M, g)$ such that   $$|A_{L_{t_{n}}}|^{2}\leq C_{1}~ \text{on}~ K\cap L_{t_{n}}.$$ 
 From Theorem \ref{proper}, $\int_{L_{t_{n}}}\kappa(x)dv\leq 2\pi$. Hence,  $$\text{Area}(K\cap L_{t_{n}})\leq 2\pi(\inf\limits_{x\in K}\kappa(x))^{-1}.$$

 We use Theorem \ref{convergence} to find a sub-sequence of $\{L_{t_{n}}\}$ sub-converging to a properly embedded lamination $\mathscr{L}'$ with finite multiplicity.  Since $\mathscr{L}$ is a closed set in $M$, $\mathscr{L}'\subset \mathscr{L}$ is a sub-lamination.

From now on, we abuse notation and write $\{L_{t_{n}}\}$ and $\{p_{t_{n}}\}$ for  the convergent subsequence. 

\vspace{2mm}
\noindent\emph{\textbf{Step 2}: $\{\Sigma^{t_{n}}_{i_{n}}\}$ converges  with multiplicity one.}\par
\vspace{2mm}
\begin{figure}[H]
\begin{center}
\begin{tikzpicture} 
\draw (-9,-4) -- (3, -4);
\draw (-7,4.1) -- (5,4.1);
\draw (-9, -4)--(-7, 4.1);
\draw (3, -4)--(5, 4.1);

\fill[pink]  (-0.9, 0.75) arc(0:360: 1.6 and 1.2);

\draw (2.1, 1) arc (360: 0: 4 and 3);
\draw[dashed] (2,0) arc (0: 169: 4.05 and 3);
\draw (2, 0) arc (365 :160: 4.1 and 3);

\draw (-0.9, 0.75) arc(0:360: 1.6 and 1.2);
\draw[dashed] (-1, -.025) arc(0:360: 1.5 and 1.2);
\node(Os) at (2,-3) {$L_{t_{\infty}}$};
\node(Os) at (-1, -2.5) {$\Sigma_{\infty}$};
\node(Os) at (0, 0) {$\Sigma^{t_{n}}_{i_{n}}$};
\node(Os) at (-2.4, -0.75) {$B^{\Sigma_{\infty}}(p_{\infty})$};
\node(Os) at (-6, -3) {$\pi^{-1}(B^{\Sigma_\infty}(p_{\infty}))\cap \Sigma^{t_{n}}_{i_{n}}$};

\draw[->] (-5.8, -2.7) -- (-2.4, 0.5);
 \end{tikzpicture}
 
 \caption{}
\end{center}
\end{figure}

Let $L_{t_{\infty}}$ be the unique component of $\mathscr{L}'$ passing through $p_{\infty}$, where $p_{\infty}=\lim_{n\rightarrow \infty}p_{t_{n}}$. The limit of $\{\Sigma^{t_{n}}_{i_{n}}\}$ is the component $\Sigma_{\infty}$ of $L_{t_{\infty}}\cap R_{k}$ passing through $p_{\infty}$, where $\Sigma^{t_{n}}_{i_{n}}$ is the unique component of $R_{k}\cap L_{t_{n}}$ passing though $p_{t_{n}}$.\par

Let $D\subset L_{t_{\infty}}$ be a simply-connected subset satisfying ${\Sigma}_{\infty}\subset D$. (Its existence is ensured by the fact that $L_{\infty}$ is homeomorphic to $\mathbb{R}^{2}$.) Since $\{L_{t_{n}}\}$ smoothly converges to $L_{t_{\infty}}$ with finite multiplicity, there exists  $\epsilon_{1}>0$ and an integer $n_0$ such that $$\Sigma^{t_{n}}_{i_{n}}\subset D(\epsilon_{1}),~\text{for}~n>n_0, 
 $$
where $D(\epsilon_{1})$ is the tubular neighborhood of $D$ with radius $\epsilon_{1}$ in $M$ (see Definition \ref{converge}.)\par

Let $\pi:D(\epsilon_{1})\rightarrow D$ be the projection. From Remark \ref{graph}, we know that
for $n$ large enough, the restriction of $\pi$ to each component of $L_{t_{n}}\cap D(\epsilon_{1})$ is injective. 

Hence, $\pi|_{\Sigma^{t_{n}}_{i_{n}}}:\Sigma^{t_{n}}_{i_{n}}\rightarrow D$ is injective. That is to say, $\Sigma^{t_{n}}_{i_{n}}$ is a normal graph over a subset of $D$.  
 Therefore, $\{\Sigma^{t_{n}}_{i_{n}}\}$ converges to ${\Sigma}_{\infty}$ with multiplicity one (see Definition \ref{converge}). That is to say, there is a geodesic disc $B^{\Sigma_{\infty}}(p_{\infty})\subset \Sigma_{\infty}$ centered at $p_{\infty}$ with small raduis so that 

\vspace{2mm}
\emph{$(\ast\ast)$: the set $\pi^{-1}(B^{\Sigma_{\infty}}(p_{\infty}))\cap \Sigma^{t_{n}}_{i_{n}}$ is connected and a normal graph over $B^{\Sigma_{\infty}}(p_{\infty})$, for large $n$. }
\vspace{2mm}

\vspace{2mm}
\noindent\emph{\textbf{Step 3}: Get a contradiction.}\par
\vspace{2mm}
 There exists a neighborhood $U$ of $p_{\infty}$ and a coordinate map $\Phi$, such that each component of $\Phi(\mathscr{L}\cap U)$ is $\mathbb{R}^{2}\times \{x\}\cap \Phi(U)$ for some $x\in \mathbb{R}$ (see the definition of the Lamination in [Appendix B, 609-612] of \cite{CM}). Choose the disc $B^{\Sigma_{\infty}}(p_{\infty})$ and $\epsilon_{1}$ small enough such that $\pi^{-1}(B^{\Sigma_{\infty}}(p_{\infty}))\subset U$. We may assume that $U=\pi^{-1}(B^{\Sigma_{\infty}}(p_{\infty}))$.
 
 \begin{figure}[H]
\begin{center}
\begin{tikzpicture} 

\node[draw,ellipse,minimum height=100pt, minimum width = 100pt,thick] (S) at (-4,0){}; 
\node[draw,ellipse,minimum height=100pt, minimum width = 100pt,thick] (S) at (3,0){}; 

\begin{scope}
    \clip (3,0) circle (1.75cm);
    \fill[pink] (1, 1) rectangle (7, -3);
  \end{scope}
  \begin{scope}
  \clip (-4,0) circle (1.75cm);
  \fill[pink] (-8,0.5) rectangle (1, 10);
  \end{scope}
\draw  (1,0) -- (5,0);
\draw (1,0.5) -- (5,0.5);
\draw(1, 1) -- (5,1);

\draw (-6,0) --(-2,0);
\draw (-6, 1)--(-2, 1);
\draw (-6, 0.5)--(-2, 0.5);

\node(Os) at (0.9,0) {$x_{\infty}$};
\node(Os) at (0.9,0.5) {$x_{t_{n'}}$};
\node(Os) at (0.9,1) {$x_{t_{n}}$};

\node(Os) at (-1.7, 0) {$x_{\infty}$};
\node(Os) at (-1.7,1) {$x_{t_{n'}}$};
\node(Os) at (-1.7,0.5) {$x_{t_{n}}$};

\node(OS) at (3, 1) {$\Phi(U)$};
\node(Os) at  (3, -1){$\Phi(U\cap B_{j_{t_{n}}})$};

\node(OS) at (-4, -1) {$\Phi(U)$};
\node(Os) at  (-4, 1){$\Phi(U\cap B_{j_{t_{n}}})$};
\end{tikzpicture}
\caption{}
\end{center}
\end{figure}

 From $(\ast\ast)$,  $\Sigma^{t_{n}}_{i_{n}}\cap U\subset L_{t_{n}}$ is connected and a graph over $B^{\Sigma_{\infty}}(p_{\infty})$, for $n$ large enough. Since $\partial B_{j_{t_{n}}}\cap U\subset L_{{t_{n}}}$ equals $\Sigma^{t_{n}}_{i_{n}}\cap U$, it is also  connected. Therefore $\Phi(\partial B_{j_{t_{n}}}\cap U)$ is the set $\mathbb{R}^{2}\times \{x_{t_{n}}\}\cap \Phi(U)$ for some $x_{t_{n}}\in \mathbb{R}$. In addition, $\Phi(\Sigma_{\infty}\cap U)$ equals $\mathbb{R}^{2}\times\{x_{\infty}\}\cap\Phi(U)$ for some $x_{\infty}\in \mathbb{R}$. Since $\lim\limits_{n\rightarrow \infty}p_{t_{n}}=p_{\infty}$, we have $\lim\limits_{n\rightarrow\infty}x_{t_{n}}=x_{\infty}$.  \par

The set $U\setminus \partial B_{j_{t_{n}}}$ has two components. Therefore, $\Phi(B_{j_{t_{n}}}\cap U)$ is $\Phi(U)\cap \{x| x_{3}> x_{t_{n}}\}$ or $\Phi(U)\cap \{x| x_{3}< x_{t_{n}}\}$. For $n$ large enough, there exists some $n'\neq n$ such that $\mathbb{R}^{2}\times \{x_{t_{n'}}\}\cap \Phi(U) \subset \Phi(B_{j_{t_{n}}}\cap U)$. This implies that $B_{j_{t_{n}}}\cap B_{j_{t_{n'}}}$ is non-empty.\par
Since $S$ consists of maximal elements in $(\{B^{t}_{i}\}, \subset)$, the set $B_{j_{t_{n}}}\cap B_{j_{t_{n'}}}$ must be empty  which leads to a contradiction. This finishes the proof.
\end{proof}

\subsection{The finiteness of $S$ implies Lemma \ref{covering}}
We will explain how to deduce Lemma \ref{covering} from the finiteness of $S$.\par

 \begin{proof} Since $S$ is  finite , we may assume that $\partial B_{j}$ intersects $\partial R_{k}(\epsilon)$ transversally for each $B_{j}\in S$.
 Remark that each $B_{j}$ is equal to some $B^{t}_{i}$ and $\partial B_{j}\cap \partial R_{k}(\epsilon)$ equals $\Sigma^{t}_{i}\cap \partial R_{k}(\epsilon)$.  Since each $\Sigma^{t}_{i}$ is properly embedded, $\{c_{i}\}_{i\in I}:=\partial R_{k}(\epsilon)\cap(\cup_{B_{j}\in S}\partial B_{j})$ has finitely many components.  Each component is an embedded circle.

The Vanishing property of $\mathscr{L}$ and Remark \ref{general} show that each $c_{i}$ is contractible in $\partial R_{k}(\epsilon)$ and bounds a unique closed disc $D_{i}\subset \partial R_{k}(\epsilon)$ (since $k\geq k_{0}$). The set $(D_{i}, \subset)$ is a partially ordered set. Let $\{D_{i'}\}_{i'\in I'}$ be the set of maximal elements. The set $I'$ is finite .\par

Since the boundary of $\partial R_{k}(\epsilon)\cap \overline{B}_{j}$ is a subset of $\partial B_{j}\cap \partial R_{k}(\epsilon)\subset \amalg_{i\in I}c_{i}$, it is  contained in $\amalg_{i'\in I'}D_{i'}$ for each $B_{j}\in S$.
 
Next we show that for any $B_{j}\in S$, $\partial R_{k}(\epsilon)\cap \overline{B_{j}}$ is contained in $\amalg_{i'\in I'}D_{i'}$.

If not, $\partial R_{k}(\epsilon)\setminus \amalg_{i'\in I'} D_{i'}$ is contained in $\partial R_{k}(\epsilon)\cap \overline{B}_{j}$ for some $B_{j}\in S$. This implies that the composition of two maps $\pi_{1}(\partial R_{k}(\epsilon)\setminus \amalg_{i'\in I'}D_{i'})\rightarrow \pi_{1}(\overline{B}_{j})\rightarrow\pi_{1}({R}_{k})$ is not a zero map. 
It is in contradiction with the fact that  the induced map $\pi_{1}(\overline{B}_{j})\rightarrow \pi_{1}({R}_{k})$ is trivial. We conclude that for each $B_{j}\in S$, $\partial R_{k}(\epsilon)\cap \overline{B}_{j}$ is contained in $\amalg_{i'\in I'}D_{i'}$.\par

Therefore, $\cup_{B_{j}\in S}\overline{B}_{j}\cap \partial R_{k}(\epsilon)$ is contained in $\amalg_{i'\in I'} D_{i'}$. From Proposition \ref{cover}, $\mathscr{L}\cap \partial R_{k}(\epsilon)$ is contained in a disjoint union of finite discs $\{D_{i'}\}_{i'\in I'}$. This completes the proof.
 \end{proof}

 \section*{Appendix A}
 
\noindent\textbf{Lemma} \ref{existence} \emph{Let $(\Omega, \partial \Omega) \subset (N, \partial N)$ be a 2-sided embedded disc with some closed sub-discs removed, where $N$ is a closed handlebody of genus $g>0$. Assume that each circle $\gamma_{i}$ is contractible in $\partial N$, where $\partial \Omega=\amalg_{i}\gamma_{i}$. Then $N\setminus \Omega$ has two connected components. Moreover, there is a unique component $B$ satisfying that the induced map $\pi_{1}(B)\rightarrow\pi_{1}(N)$ is trivial.  }

\begin{proof}  
As in the proof of Lemma 7.1 in \cite{W1}, we can conclude that $\Omega$ cuts $N$ into two components, $B_{1}$ and $B_{2}$.

Remark that each embedded circle $\gamma_{i}$ is contractible in $\partial N$ and bounds a unique closed disc $D_{i}\subset \p N$. Let us consider the surface $\hat{\Omega}:=\Omega\bigcup(\cup_{\gamma_{i}}D_{i})$. It is an immersed $2$-sphere in $N$. This induces that  the map $\pi_{1}(\Omega)\rightarrow\pi_{1}(\hat{\Omega})$ is trivial map. Therefore, the map $\pi_{1}(\Omega)\rightarrow \pi_{1}(N)$ is trivial. 

\vspace{2mm}

In the following, we show  the existence of $B$. 

Consider the partially ordered relationship over $\{D_{i}\}$ induced by inclusion. Therefore, $\cup_{i}D_{i}$ is equals to a disjoint union of maximal elements in $(\{D_{i}\}, \subset)$. The set $\partial N\setminus \cup_{i}D_{i}$ is a compact surface with some punctures. 

Therefore, the induced map $\pi_{1}(\partial N\setminus \cup_{i}D_{i})\rightarrow \pi_{1}(\partial N)$ is surjective. In addition, the induced map $\pi_{1}(\partial N)\rightarrow \pi_{1}(N)$ is also surjective. We can conclude that the composition of these two maps $\pi_{1}(\partial N\setminus \cup_{i}D_{i})\rightarrow \pi_{1}(N)$ is also surjective.

The set $\partial N\setminus \cup_{i}D_{i}$ is contained in one of the two components, $B_{1}$ and $B_{2}$, of $N\setminus \Omega$. Without loss of generality, we may assume that  $B_{1}$ contains $\partial N\setminus \cup_{i}D_{i}$. From above, the induced map $\pi_{1}(B_{1})\rightarrow \pi_{1}(N)$ is surjective. 

Let $G_{i}$ be the image of the map $\pi_{1}(B_{i})\rightarrow \pi_{1}(N)$.  Van-Kampen's Theorem gives an isomorphism between $\pi_{1}(N)$ and $\pi_{1}(B_{1})\ast_{\pi_{1}(\Omega)}\pi_{1}(B_{2})$.  Since the image of $\pi_{1}(\Omega)\rightarrow\pi_{1}(N)$ is trivial, $\pi_{1}(N)$ is isomorphic to $G_{1}\ast G_{2}$. Grushko's Theorem \cite{Gru} shows that 
$\text{rank}(G_{1})+\text{rank}(G_{2})=\text{rank}(\pi_{1}(N))$. (The rank of a group is the smallest cardinality of a generating set for the group.) From the last paragraph, the image, $G_{1}$, of the map $\pi_{1}(B_{1})\rightarrow \pi_{1}(N)$ is isomorphic to $\pi_{1}(N)$. That is to say, $\text{rank}(G_{1})=\text{rank}(\pi_{1}(N))$. Therefore,  $\text{rank}(G_{2})$ is equal to zero. Namely, $G_{2}$ is a trivial group. We know that $B:=B_{2}$ is the required candidate in the assertion.

The uniqueness is the same as the genus one case (See the proof of Lemma 7.1 in \cite{W1}).\end{proof}

\section*{Appendix B}

\noindent\textbf{Theorem}~\ref{proper}\emph{
Let $(M, g)$ be a complete oriented 3-manifold with positive scalar curvature $\kappa(x)$. Assume that $\Sigma$ is a complete (non-compact) stable minimal surface in $M$. Then, one has,$$\int_{\Sigma}\kappa(x)dv\leq 2\pi,$$where $dv$ is the volume form of the induced metric $ds^{2}$ over $\Sigma$. Moreover, if $\Sigma$ is an embedded surface, then $\Sigma$ is proper.}

\begin{proof} By [Theorem 2, Page 211] in \cite{SY}, $\Sigma$ is conformally diffeomorphic to $\mathbb{R}^{2}$.\par

Let us consider the Jacobi operator $L:=\Delta_{\Sigma}- K_{\Sigma}+(\kappa(x)+\frac{1}{2}|A|^{2})$, where $K_{\Sigma}$ is the Gaussian curvature of the metric $ds^{2}$ and $\Delta_{\Sigma}$ is the Laplace-Beltrami operator of $(\Sigma, ds^{2})$. From [Theorem 1, Page 201] in \cite{FS}, there exists a positive function $u$ on $\Sigma$ satisfying $L(u)=0$, since $\Sigma$ is a stable minimal surface. \par

Consider the metric $d\tilde{s}^{2}:=u^{2}ds^{2}$. Let $\tilde{K}_{\Sigma}$ be its sectional curvature and $d\tilde{v}$ its volume form. We know that 
$$\tilde{K}_{\Sigma}=u^{-2}(K_{\Sigma}-\Delta_{\Sigma}~log~ u)~~~~\text{and}~~~~~~~~d\tilde{v}=u^{2}dv.$$
By Fischer-Colbrie's work [Theorem 1, Page 126] in \cite{F}, $(\Sigma, d\tilde{s}^{2})$ is a complete surface with non-negative sectional curvature $\tilde{K}_{\Sigma} \geq 0$. By the Cohn-Vossen inequality \cite{CV}, one has

 $$\int_{\Sigma}\tilde{K}_{\Sigma}d\tilde{v}\leq 2\pi\chi(\Sigma),$$where $\chi(\Sigma)$ is the Euler characteristic of $\Sigma$.\par
 Since $L(u)=0$, one has that $\int_{B^{\Sigma}(0, R)} L(u)u^{-1}dv=0$, where $B^{\Sigma}(0,R)$ is the geodesic ball in $(\Sigma, ds^{2})$ centered at $0\in \Sigma$ with radius $R$. We deduce that
\begin{equation*}
	\begin{split}
	\int_{B^{\Sigma}(0, R)}\kappa(x)+\frac{1}{2}|A|^{2}dv &=\int_{B^{\Sigma}(0, R)}(K_{\Sigma}-u^{-1}\Delta_{\Sigma}~u)dv\\
	&=\int_{B^{\Sigma}(0, R)} K_{\Sigma}-(\Delta_{\Sigma}~log~u+u^{-2}|\nabla u|)dv\\
	&\leq \int_{B^{\Sigma}(0, R)}u^{-2}(K_{\Sigma}-\Delta_{\Sigma} log~u)u^{2}dv\\
	&=\int_{B^{\Sigma}(0, R))}\tilde{K}_{\Sigma}d\tilde{v}\\
	&\leq \int_{\Sigma} \tilde{K}_{\Sigma}d\tilde{v}
	\end{split}
	\end{equation*}
We know that $\chi(\Sigma)=1$, since $\Sigma$ is diffeomorphic to $\mathbb{R}^{2}$. Combining these two inequalities above and taking $R\rightarrow \infty$, we have that, 
$$\int_{\Sigma}\kappa(x)+\frac{1}{2}|A|^{2} dv\leq 2\pi.$$	

Suppose that $\Sigma$ is not proper.  There is an accumulation point $p$ of $\Sigma$ so that the set $B(p, r/2)\cap \Sigma$ is a non-compact closed set in $\Sigma$. Namely, it is unbounded in $(\Sigma, ds^2)$.
Hence, there is a sequence$\{p_{k}\}$ of points in $\Sigma\cap B(p, r/2)$ going to infinity in $(\Sigma, ds^2)$. 

Therefore, we may assume that the geodesic discs $\{B^{\Sigma}(p_k, r/2)\}_k$ in $\Sigma$ are disjoint.

Define two constants $r_{0}:=\frac{1}{2}\min\{r, i_{0}\}$ and $K:=\sup_{x\in B(p, r)}|K_{M}(x)|$ where $i_{0}:=\inf_{x\in B(p, r)} \text{Inj}_{M}(x)$ and $K_{M}$ is the sectional curvature. The geodesic disc $B^{\Sigma}(p_{k}, r_{0}/2)$ is in $B(p, r)$. 

Applying [Theorem 3, Appendix, Page 139] of \cite{FKR}  to the geodesic disc $ B^{\Sigma}(p_k, r_0/2)\subset B(p, r)$, we have 
  $$\text{Area}( B^{\Sigma}(p_{k}, r_{0}/2))\geq C(i_{0}, r_{0}, K).$$ 
 This leads to a contradiction as follows:
\begin{equation*}
\begin{split}
2\pi &\geq \int_{\Sigma}\kappa(x)dv \geq \int_{B(p, r)\cap \Sigma} \kappa(x)dv\\
      &\geq \sum_{k} \int_{B^{\Sigma}(p_{k},r_{0}/2)} \kappa(x)\\
      &\geq \inf_{x\in B(p, r)} \kappa(x)\cdot\sum_{k} \text{Area}(B^{\Sigma}(p_{k}, r_{0}/2))\\ 
      &\geq \inf_{x\in B(p, r)} \kappa(x)\cdot\sum_{k} C= \infty
      \end{split}
\end{equation*}
\end{proof}

\section*{Appendix C: Example} 

There are infinitely many  contractible $3$-manifolds with non-trivial fundamental group at infinity. In this Appendix, we construct such a $3$-manifold $M$ and analyse its topology. We will prove that this $3$-manifold has no complete metric of positive scalar curvature. 

\vspace{2mm}
\noindent{\textbf{C.1~ The construction of $M$}.} Before constructing the $3$-manifold, let us introduce a notation. A handlebody $N\subset \mathbb{S}^{3}$ of genus $g$ is said to be unknotted in $\mathbb{S}^{3}$ if its complement in $\mathbb{S}^{3}$ is a handlebody of genus $g$. 

\begin{figure}[H]
\centering{
\def\svgwidth{\columnwidth}
{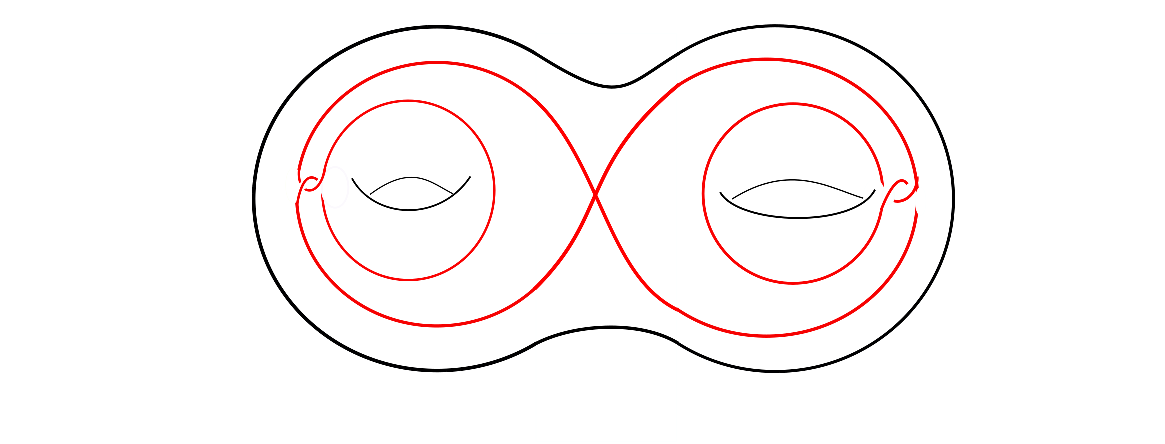}
\caption{}
\label{Fig1}
}
\end{figure}
 Choose an unknotted  handlebody $W_{0}\subset\mathbb{S}^{3}$ of genus two.  Take a second handlebody $W_{1}\subset \text{Int}~W_{0}$ of genus two which is a tubular neighborhood of the  curve in Figure \ref{Fig1}.  Then, embed another handlebody $W_{2}$ of genus two inside $W_{1}$ in the same way as $W_{1}$ lies in $W_{0}$ and so on infinitely many times.  Therefore, we obtain a decreasing family $\{W_{k}\}$ of handlebodies of genus two.

The manifold $M$ is defined as $M:=\mathbb{S}^{3}\setminus \cap_{k=0}^\infty W_{k}$. It is an open manifold. 

\begin{figure}[H]
\centering{
\def\svgwidth{\columnwidth}
{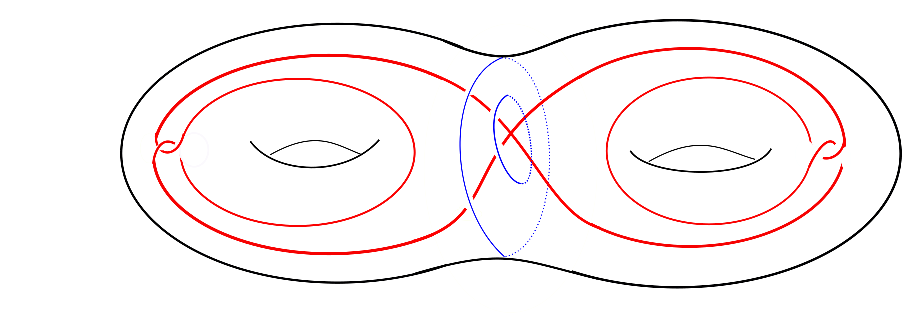}
\caption{}
\label{Fig2}
}
\end{figure}

We have that each $W_{k}$ is unknotted in $\mathbb{S}^{3}$. Namely,  the complement $N_{k}$ of $W_{k}$ in $\mathbb{S}^{3}$ is a handlebody of genus two. Therefore, $M$ can be written as an increasing union of handlebodies $\{N_{k}\}_{k}$ of genus two. Furthermore,  each  $N_{k}$ lies in $N_{k+1}$ as in Figure \ref{Fig2}. (The set $K_{k}$ is the core of $N_{k}$.)

 Each $N_{k}$ is homotopically trivial in $N_{k+1}$. We can conclude that $M$ is a contractible $3$-manifold. 

\vspace{2mm}

\noindent \textbf{C.2 The topological property of $M$.} In this part, we first show that the fundamental group at infinity of $M$ is non-trivial. As a consequence, $M$ is not homeomorphic to $\mathbb{R}^3$.  In the manifold $M$, there is a properly embedded plane. This plane cuts $M$ into two Whitehead manifolds.  

\vspace{2mm}

First, we see from Figure \ref{Fig1} that $W_{k}$ is an effective handlebody relative to $W_{k+1}$ for each $k$. From Lemma \ref{injective}, the map $\pi_{1}(\partial W_{k})\rightarrow \pi_{1}(\overline{W_{k}\setminus W_{k+1}})$ is injective. In addition, the set $\overline{W_{k}\setminus W_{k+1}}$ is equal to $\overline{N_{k+1}\setminus N_{k}}$. Therefore, we conclude that for each $k$, the map $\pi_{1}(\partial N_{k})\rightarrow \pi_{1}(\overline{N_{k+1}\setminus N_{k}})$ is injective. 

Second, from Figure \ref{Fig2}, we show that each $N_{k+1}$ is an effective handlebody relative to $N_{k}$. By Lemma \ref{injective}, the map $\pi_{1}(\partial N_{k+1})\rightarrow \pi_{1}(\overline{N_{k+1}\setminus N_{k}})$ is injective. 

\vspace{2mm}

As in the genus one case, for each $k$, the maps $\pi_{1}(\partial N_{k})\rightarrow \pi_{1}(\overline{M\setminus N_{k}})$ and  $\pi_{1}(\partial N_{k})\rightarrow \pi_{1}(\overline{N_{k}\setminus N_{0}})$ are both injective. That is to say, the family $\{N_{k}\}$ has Property (H). 

\vspace{2mm}
 
Pick the separating meridian $\gamma_{k}\subset \partial N_{k}$ as in Figure \ref{Fig2} . From Figure \ref{Fig2}, for each $k$, $\gamma_{k}$ is homotopic to $\gamma_{k+1}$ in $\overline{N_{k+1}\setminus N_{k}}$.  Since $\{N_{k}\}$ satisfies Property (H), the map $\pi_{1}(\partial N_{k})\rightarrow \pi_{1}(\overline{M\setminus N_{0}})$ is injective (See Remark \ref{inj}). That is to say, for $k>0$, $\gamma_{k}$ is non-contractible in $M\setminus N_{0}$. 

From Remark \ref{element}, the sequence of $\{\gamma_{k}\}$ gives a non-trivial element in $\pi^\infty_{1}(M)$. Since $\pi^\infty_{1}(M)$ is non-trivial, $M$ is not simply-connected at infinity. In particular, $M$ is not homeomorphic to $\mathbb{R}^{3}$. 

\vspace{2mm}

Next, we construct the properly embedded plane in $M$ from the sequence $\{\gamma_{k}\}_{k}$. 

 Choose an embedded annulus $A_{k}\subset \overline{N_{k+1}\setminus N_{k}}$ with boundary $\gamma_{k}\amalg \gamma_{k+1}$. Let $D_{0}\subset N_{0}$ be a meridian disc with boundary $\gamma_{0}$. We define the plane $P$ as 
 \begin{equation*}
 P:=\cup_{k\geq 0} A_{k}\cup D_{0}. 
 \end{equation*}
  The plane $P$ cuts $M$ into two contractible $3$-manifolds $M'$ and $M''$. In addition, the intersection $P\cap N_{k}$ is a separating meridian disc of $N_{k}$ with boundary $\gamma_{k}$.

    \begin{figure}[H]
\centering{
\def\svgwidth{\columnwidth}
{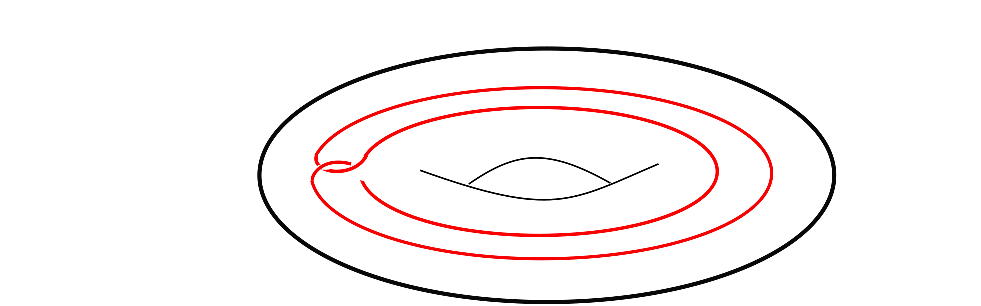}
\caption{}
\label{Fig3}
}
\end{figure}
  
  From the sequence $\{N_{k}\}$, we obtain two increasing families, $\{N'_{k}\}$ and $\{N''_{k}\}$, of solid tori in $M$ satisfying that 
  \begin{itemize}
  \item $M'=\cup_{k} N'_{k}$ and $M''=\cup_{k} N''_{k}$;
  \item the set $N_{k}\setminus (N'_{k}\amalg N''_{k})$ is a tubular neighborhood of the meridian disc $P\cap N_{k}$. 
  \end{itemize}

  Furthermore, each $N'_{k}$ is embedded into $N'_{k+1}$ as in Figure \ref{Fig3}. We see that $M'$ is homeomorphic to the Whitehead manifold. Similarly, the contractible $3$-manifold $M''$ is also homeomorphic to the Whitehead manifold. Therefore, $P$ cuts $M$ into two Whitehead manifolds.

\noindent\textbf{C.3  Non-existence of PSC metrics.}
In this part, we show that the manifold $M$ has no complete metric of positive scalar curvature. 

We argue by contradiction. Suppose that $M$ has a complete metric of positive scalar curvature. As in Section 5.1, there is a family of laminations $\{\mathscr{L}_{k}\}_k$ sub-converging toward a stable minimal lamination $\mathscr{L}:=\cup_{t\in \Lambda} L_{t}$. 

Since $\pi_{1}^\infty(M)$ is non-trivial,  some leaf in $\mathscr{L}$ may not satisfy the Vanishing property for $\{N_{k}\}_{k}$. To overcome it, we attempt to find a new family of handlebodies with Property (H). 

\vspace{2mm}

We know that $M'=\cup_{k}N'_{k}$ is homeomorphic to the Whitehead manifold. The geometric index $I(N'_{k}, N'_{k+1})$ is equal to $2$. (See Section 2 of \cite{W1}). From Lemma 2.10 of \cite{W1}, the maps $\pi_{1}(\partial N'_{k})\rightarrow \pi_{1}(\overline{M\setminus N'_{0}})$ and $\pi_{1}(\partial N'_{k})\rightarrow \pi_{1}(\overline{N'_{k}\setminus N'_{0}})$ are both injective. Therefore, the family $\{N'_{k}\}$ satisfies Property (H). 

\vspace{2mm}

In addition, each leaf $L_{t}$ in $\mathscr{L}$ satisfies Property P (See Definition 3.3 of \cite{W1}). That is to say, for any circle $\gamma\subset L_{t}\cap \partial N'_{k}$, one of the following holds: 
\begin{itemize}
\item $\gamma$ is homotopically trivial in $\partial N'_{k}$;
\item for $l\leq k$, $D\cap \text{Int}~ N'_{l}$ has at least $I(N'_{l}, N'_{k})$ components intersecting $N_{0}$
\end{itemize}where $D\subset L_{t}$ is the unique disc with boundary $\gamma$ and the geometric index $I(N'_{l}, N'_{k})$ is equal to $2^{k-l}$. 

\vspace{2mm}

 In the following, we consider the geometry of the leaves intersecting $M'$. As Lemma 6.1 of \cite{W1}, we know the following claim. 

\vspace{2mm}

\textbf{Claim}: \emph{$\mathscr{L}$ satisfies the Vanishing property with respect to $\{N'_{k}\}_{k}$.} 

\vspace{2mm}

The proof of this claim is the same as Lemma 6.1 of \cite{W1}. Let us explain the proof of the claim. 

We argue  by contradiction. We suppose that there exists a sequence of increasing integers $\{k_{n}\}_{n}$ such that :\par
\vspace{2mm}
{\emph{ for each $k_{n}$, there exists a minimal surface $L_{t_{n}}$ in $\{L_{t}\}_{t\in \Lambda}$ and an embedded curve $c_{k_{n}}\subset L_{t_{n}}\cap\partial N'_{k_{n}}$ which is not contractible in $\partial N'_{k_{n}}$.} }\par
\vspace{2mm}
\noindent Since $\lim\limits_{n\rightarrow\infty}k_{n}=\infty$,  $\lim\limits_{n\rightarrow\infty} I(N'_{1}, N'_{k_{n}})=\infty$.

Since $(M, g)$ has positive scalar curvature, $L_{t_{n}}$ is homeomorphic to $\mathbb{R}^{2}$. Then,  there exists a unique disc $D_{n}\subset L_{t_{n}}$ with boundary $c_{k_{n}}$. From the above property,  we see that $D_{n}\cap N'_{1}$ has at least $I(N'_{1}, N'_{k_{n}})$ components intersecting $N'_{0}$, denoted by $\{\Sigma_{j}\}_{j=1}^{m}$.

Define the constants $r:=d^{M}(\partial N'_{0},\partial N'_{1})$, $C:=\inf_{x\in N'_{1}}\kappa(x)$, $K:=\sup_{x\in N'_{1}}|K_{M}|$ and $i_{0}:=\inf_{x\in N'_{1}}(\text{Inj}_{M}(x))$ , where $K_{M}$ is the sectional curvature of $(M, g)$ and  $\text{Inj}_{M}(x)$ is the injective radius at $x$ of $(M, g)$.\par
 Choose $r_{0}=\frac{1}{2}\min\{i_{0}, r\}$ and $x_{j}\in \Sigma_{j}\cap N'_{0}$, then $B(x_{j}, r_{0})$ is in $ N'_{1}$. We apply [Lemma 1, Page 445] in \cite{YM} to the minimal surface $(\Sigma_{j}, \partial\Sigma_{j})\subset (N'_{1}, \partial N'_{1})$. Hence, one has that 
$$\text{Area}(\Sigma_{j}\cap B(x_{j}, r_{0}))\geq C_{1}(K, i_{0}, r_{0}).$$
From Theorem \ref{proper}, we have:
\begin{equation*}
\begin{split}
2\pi &\geq \int_{L_{t_{n}}} \kappa(x) dv\geq \sum_{j=1}^{m}\int_{\Sigma_{j}} \kappa(x) dv \geq \sum_{j=1}^{m}\int_{\Sigma_{j}\cap B(x_{j}, r_{0})} \kappa(x) dv\\
       &\geq \sum_{j=1}^{m} C\text{Area}(\Sigma_{j}\cap B(x_{j}, r_{0})) \\
       &\geq CC_{1}m\geq CC_{1}I(N'_{1}, N'_{k_{n}})
       \end{split}
\end{equation*}
This contradicts  the fact that $\lim\limits_{n\rightarrow\infty}I(N'_{1}, N'_{k_{n}})=\infty$ and completes the proof of the claim.

\vspace{2mm}

In addition, since none of the $N'_{k}$ is contained in a $3$-ball,  we use Corollary \ref{inter} to know that if $N_{j}$ contains $N'_{k}$, the intersection $\mathscr{L}_{j}\cap \partial N'_{k}$ has at least one meridian of $R_{k}$. 

To sum up, the family $\{N'_{k}\}_{k}$ satisfies a) and b) in Section 7. That is to say, 

a) $\mathscr{L}$ satisfies the Vanishing property for $\{N'_{k}\}_{k}$;

b) if the handlebody $N_{j}$ contains $N'_{k}$, the intersection $\mathscr{L}_{j}\cap \p N'_{k}$ has at least one meridian of $N'_{k}$. 

The remaining proof is the same as the proof of Theorem \ref{A} in Sections 7 and 8.

\section*{Acknowledgement}
I would like to express my deep gratitude to my supervisor, G\'erard Besson for suggesting this problem, also for his endless support, constant encouragement. I also thank Harold Rosenberg for many enlightening discussions about minimal surfaces theory. I am  thankful to Laurent Mazet for the interest and for many valuable conversations. I am also very grateful to the referees for many helpful comments that improved the exposition of this paper. This research is supported by ERC Advanced Grant 320939, GETOM and partially by  the Special Priority Program SPP 2026 `` Geometry at infinity ".

\bibliographystyle{alpha}
\bibliography{positive2}

\newcommand{\etalchar}[1]{$^{#1}$}
\begin{thebibliography}{BBB{\etalchar{+}}10}

\bibitem[And85]{And}
Michael Anderson.
\newblock Curvature estimates for minimal surfaces in $3 $-manifolds.
\newblock In {\em Annales scientifiques de l'{\'E}cole Normale Sup{\'e}rieure},
  volume~18, pages 89--105. Elsevier, 1985.

\bibitem[BBB{\etalchar{+}}10]{BBBMP}
Laurent Bessi{\`e}res, G{\'e}rard Besson, Michel Boileau, Sylvain Maillot, and
  Joan Porti.
\newblock {\em Geometrisation of 3-manifolds}, volume~13.
\newblock European Mathematical Society, 2010.

\bibitem[CM04]{CM}
Tobias {Colding} and William {Minicozzi}.
\newblock The space of embedded minimal surfaces of fixed genus in a
  3-manifold. iv: Locally simply connected.
\newblock {\em Ann. Math. (2)}, 160(2):573--615, 2004.

\bibitem[CM11]{CM1}
Tobias Colding and William Minicozzi.
\newblock {\em A course in minimal surfaces}, volume 121.
\newblock American Mathematical Soc, 2011.

\bibitem[{Coh}35]{CV}
Stefan {Cohn-Vossen}.
\newblock {K\"urzeste Wege und Totalkr\"ummung auf Fl\"achen.}
\newblock {\em {Compos. Math.}}, 2:69--133, 1935.

\bibitem[CWY10]{CWY}
Stanley Chang, Shmuel Weinberger, and Guoliang Yu.
\newblock Taming 3-manifolds using scalar curvature.
\newblock {\em Geometriae Dedicata}, 148(1):3--14, 2010.

\bibitem[CZ06]{cao-zhu}
Huai-Dong Cao and Xi-Ping Zhu.
\newblock Hamilton-perelman's proof of the poincar\'e conjecture and the
  geometrization conjecture.
\newblock {\em arXiv preprint math/0612069}, 2006.

\bibitem[FCS80]{FS}
D.~Fischer-Colbrie and Richard Schoen.
\newblock The structure of complete stable minimal surfaces in 3-manifolds of
  non-negative scalar curvature.
\newblock {\em Communications on Pure and Applied Mathematics}, 33(2):199--211,
  1980.

\bibitem[{Fis}85]{F}
D.~{Fischer-Colbrie}.
\newblock {On complete minimal surfaces with finite Morse index in three
  manifolds.}
\newblock {\em {Invent. Math.}}, 82:121--132, 1985.

\bibitem[Fre96]{FKR}
Katia~Rosenvald Frensel.
\newblock Stable complete surfaces with constant mean curvature.
\newblock {\em Boletim da Sociedade Brasileira de Matem{\'a}tica},
  27(2):129--144, 1996.

\bibitem[GL83]{GL}
Mikhael Gromov and Blaine Lawson.
\newblock Positive scalar curvature and the dirac operator on complete
  riemannian manifolds.
\newblock {\em Publications Math{\'e}matiques de l'Institut des Hautes
  {\'E}tudes Scientifiques}, 58(1):83--196, 1983.

\bibitem[Gru40]{Gru}
Igor~Aleksandrovich Grushko.
\newblock On the bases of a free product of groups.
\newblock {\em Mat. Sbornik}, 8:169--182, 1940.

\bibitem[Hat00]{HA}
Allen Hatcher.
\newblock Notes on basic 3-manifold topology, 2000.

\bibitem[McM61]{Mc1}
DR~McMillan.
\newblock Cartesian products of contractible open manifolds.
\newblock {\em Bulletin of the American Mathematical Society}, 67(5):510--514,
  1961.

\bibitem[McM62]{Mc}
DR~McMillan.
\newblock Some contractible open 3-manifolds.
\newblock {\em Transactions of the American Mathematical Society},
  102(2):373--382, 1962.

\bibitem[MRR02]{MRR}
William Meeks, Antonio Ros, and Harold Rosenberg.
\newblock {\em The global theory of minimal surfaces in flat spaces}.
\newblock Springer, 2002.

\bibitem[MT07]{MT}
John Morgan and Gang Tian.
\newblock {\em Ricci flow and the Poincar{\'e} conjecture}, volume~3.
\newblock American Mathematical Soc., 2007.

\bibitem[MY80]{YM}
William Meeks and Shing-Tung Yau.
\newblock Topology of three dimensional manifolds and the embedding problems in
  minimal surface theory.
\newblock {\em Annals of Mathematics}, 112(3):441--484, 1980.

\bibitem[MY82]{YM1}
William Meeks and Shing-Tung Yau.
\newblock The existence of embedded minimal surfaces and the problem of
  uniqueness.
\newblock {\em Mathematische Zeitschrift}, 179(2):151--168, 1982.

\bibitem[Mye99]{Myers}
Robert Myers.
\newblock Contractible open 3-manifolds which non-trivially cover only
  non-compact 3-manifolds.
\newblock {\em Topology}, 38(1):85--94, 1999.

\bibitem[Per02a]{P1}
Grisha Perelman.
\newblock The entropy formula for ricci flow and its geometric applications.
\newblock {\em arXiv preprint math/0211159}, 2002.

\bibitem[Per02b]{P2}
Grisha Perelman.
\newblock Ricci flow with surgery on three-manifolds.
\newblock {\em arXiv preprint math/0303109}, 2002.

\bibitem[Per03]{P3}
Grisha Perelman.
\newblock Finite extinction time for the solutions to the ricci flow on certain
  three-manifolds.
\newblock {\em arXiv preprint math/0307245}, 2003.

\bibitem[Rol03]{Rol}
Dale Rolfsen.
\newblock {\em Knots and links}, volume 346.
\newblock American Mathematical Soc., 2003.

\bibitem[Rot12]{Rot}
Joseph Rotman.
\newblock {\em An introduction to the theory of groups}, volume 148.
\newblock Springer Science \& Business Media, 2012.

\bibitem[Sch83]{Schoen}
Richard Schoen.
\newblock Estimates for stable minimal surfaces in three dimensional manifolds.
\newblock In {\em Seminar on minimal submanifolds}, volume 103, pages 111--126.
  Princeton University Press Princeton, NJ, 1983.

\bibitem[Sta72]{S}
John Stallings.
\newblock {\em Group theory and three-dimensional manifolds}.
\newblock Yale University Press, 1972.

\bibitem[SY79]{SY1}
Richard Schoen and Shing-Tung Yau.
\newblock On the structure of manifolds with positive scalar curvature.
\newblock {\em Manuscripta mathematica}, 28(1-3):159--183, 1979.

\bibitem[SY82]{SY}
Richard Schoen and Shing~Tung Yau.
\newblock Complete three-dimensional manifolds with positive ricci curvature
  and scalar curvature.
\newblock In {\em Seminar on Differential Geometry}, volume 102, pages
  209--228. Princeton Univ. Press Princeton, NJ, 1982.

\bibitem[Wan19]{W1}
Jian Wang.
\newblock Contractible 3-manifolds and positive scalar curvatures ({I}).
\newblock {\em arXiv:1901.04605, accepted By Journal of Differential Geometry},
  2019.

\bibitem[Whi35]{Wh}
J.H.C. Whitehead.
\newblock A certain open manifold whose group is unity.
\newblock {\em The Quarterly Journal of Mathematics}, 6(1):268--279, 1935.

\end{thebibliography}

\end{document}